\newcommand\bovermat[2]{%
\makebox[0pt][l]{$\smash{\overbrace{\phantom{%
\begin{matrix}#2\end{matrix}}}^{\text{#1}}}$}#2}
\renewcommand*\env@matrix[1][\arraystretch]{%
  \edef\arraystretch{#1}%
  \hskip -\arraycolsep
  \let\@ifnextchar\new@ifnextchar
  \array{*\c@MaxMatrixCols c}}
\newlength{\hatchspread}
\newlength{\hatchthickness}
\newlength{\hatchshift}
\newcommand{\hatchcolor}{}
\tikzset{hatchspread/.code={\setlength{\hatchspread}{#1}},
         hatchthickness/.code={\setlength{\hatchthickness}{#1}},
         hatchshift/.code={\setlength{\hatchshift}{#1}},
         hatchcolor/.code={\renewcommand{\hatchcolor}{#1}}}
\tikzset{hatchspread=3pt,
         hatchthickness=0.4pt,
         hatchshift=1pt,
         hatchcolor=black}
\numberwithin{equation}{section}
\newtheorem{theorem}{Theorem}[section]
\newtheorem{lemma}[theorem]{Lemma}
\newtheorem{example}[theorem]{Example}
\newtheorem{proposition}[theorem]{Proposition}
\newtheorem{remark}{Remark}[section]
\begin{document}\title{\textbf{Increasing delay as a strategy to prove stability}}
\author[1,2]{ Ziyad AlSharawi\thanks{Corresponding author: zsharawi@aus.edu. This work was done while the first author was on sabbatical leave from the American University of Sharjah.}}
\author[2]{ Jose S. C\'anovas}
\affil[1]{\small American University of Sharjah, P. O. Box 26666, University City, Sharjah, UAE}
\affil[2]{\small Universidad Politécnica de Cartagena, Paseode Alfonso XIII 30203, Cartagena, Murcia, Spain}
\date{\today}
\maketitle

\begin{abstract}

We consider difference equations of the form $x_{n+1}=F_0(x_n,\ldots,x_{n-k+1}),$ and increase the delay through a process of successive substitutions to obtain a sequence of systems $y_{n+1}=F_j(x_{n-j},\ldots,x_{n-k-j+1}),\; j=0,1,\ldots$. We call this process \emph{the expansion strategy} and use it to establish novel results that enable us to prove stability. When the map $F_0$ is sufficiently smooth and has a hyperbolic fixed point, we show the fixed point is locally asymptotically stable if and only if  $\|\nabla F_j\|_1<1$ for some finite number $j$. Our local stability results complement recent results obtained on Schur stability, and they can provide an alternative to the highly acclaimed Jury's algorithm. Also, we show the effectiveness of the expansion strategy in obtaining global stability results. Global stability results are obtained by integrating the expansion strategy with the embedding technique. Finally, we give illustrative examples to show the results' practical applicability across various discrete-time dynamical systems.
\end{abstract}
\noindent {\bf AMS Subject Classification}: 39A06, 39A30, 39A60, 37N25\\
\noindent {\bf Keywords}: Expansion strategy, local stability, global stability, Schur stability, Jury's algorithm.

\section{ Introduction}
Introducing a time delay in first-order discrete-time systems of the form $X_{n+1}=F(X_n)$ results in the higher-dimensional system $X_{n+1}=F(X_{n-k}).$ This type of time delay has a foreseeable impact, and it is known to cause oscillations that give freedom to the existence of periodic solutions\cite{Al-An-El2008, Di-Gi2000, Ba-Li2003}.  However, we are interested in investigating a more peculiar type of delay increase in this paper.  We consider the scalar difference equation 
\begin{equation}\label{Eq-F0}
x_{n+1}=F_0(x_n,x_{n-1},\ldots,x_{n-k+1})\quad \text{for}\quad k>1,
\end{equation}
where $F_0:V^k\to V$ and $V$ can be $\mathbb{R},$ $\mathbb{R}_+=[0,\infty)$ or a compact interval in  $\mathbb{R}_+$. We assume $F_0$ is sufficiently smooth,  and expand the time delay by employing a forward substitution to obtain
\begin{equation}\label{Eq-TheExpansion}
\begin{split}
y_{n+1}=&F_0(F_0(y_{n-1},\ldots,y_{n-k}),y_{n-1},\ldots,y_{n-k+1})\\
=&F_1(y_{n-1},\ldots,y_{n-k+1},y_{n-k}).
\end{split}
\end{equation}
   By focusing on the initial conditions of  Eqs. \eqref{Eq-F0} and \eqref{Eq-TheExpansion}, it is easy to observe that $F_1$ gives a new system with a higher dimension. More specifically, if $F_0:V^k\to V,$ then $F_1:V^{k+1}\to V,$ where $V^{k+1}=V^k\times V$. Our main concern in this paper is to reveal the relationship between the dynamics of Eq. \eqref{Eq-F0} and the dynamics of its expansion in Eq. \eqref{Eq-TheExpansion}, then generalize this process through successive substitutions.  It was observed in \cite{Al-Ca-Ka2025, El-Lo2008,Ca-La2008} that this substitution (or delay expansion) may offer some advantages in establishing stability or boundedness. However, to the best of our knowledge, no established theory presently addresses this notion as a cohesive strategy. This observation inspires us to examine the relationship between the dynamics of the original $F_0$-system and the expanded systems.   In particular, stability in the expanded systems will be investigated and utilized to prove stability in the original $F_0$-system. We refer to this technique as \emph{the expansion strategy}.
\\

Throughout this paper, the map $F_0$ is considered sufficiently smooth. Since it is customary in applications to confine the domain of $F_0$ to the positive cone $V^k=\mathbb{R}_+^k$, we consider it so unless otherwise specified. The eigenvalues of a square matrix $A$ are denoted by the spectrum of $A$ $ (Spec(A)).$ We adopt a strict local asymptotic stability definition (LAS) and call a fixed point $\bar x$ of $F_0$ in Eq.  \eqref{Eq-F0} LAS if the spectrum of the Jacobian matrix at the fixed point is contained in the open unit disk of the complex plane, i.e., $\mathbb{D}=\{(x,y):\; x^2+y^2<1\}.$ It is worth mentioning that we use $\bar x$ as a scalar value when we talk about fixed points of $F_j,$ while as a vector with $k+j$ components when we talk about equilibrium solutions of an $F_j$-system.  
 A unique fixed point of $F_0$ is called globally attracting if each orbit of $F_0$ converges to the fixed point. In this case, LAS and globally attracting imply global asymptotic stability (GAS).   On the other hand, $\bar x$  of Eq. \eqref{Eq-F0} is called unstable if at least one eigenvalue of the Jacobian matrix of $F_0$ at $\bar x$ is located out of the closed unit disk $\bar{\mathbb{D}}$. If some eigenvalues are in $\mathbb{D}$ while others are on the boundary of $\mathbb{D},$ the fixed point is called nonhyperbolic, and its stability is beyond our interest in this paper.  For a single-variable polynomial $p(x),$ we use $\|p\|_{\ell_1}$ to denote the $\ell_1$-norm, i.e., the absolute sum of the coefficients. Similarly, the norm $\|U\|_1$ denotes the absolute sum of the coordinates whenever $U\in\mathbb{R}^n.$
\\

The second iteration of a system was used in the literature to show boundedness \cite{Ca-La2008}, and the two-dimensional Ricker model is a simple example. Indeed, boundedness of orbits is not obvious in $x_{n+1}=F_0(x_n,x_{n-1})=x_n\exp(b-x_{n-1}),$ but it is obvious in 
\begin{align*}
y_{n+1}=&F_1(y_{n-1},y_{n-2})\\
=&y_{n-1}\exp(b-y_{n-1})\exp(b-y_{n-2})\\
\leq& e^{2b-1},\;\forall y_0,y_{-1},y_{n-2}\geq 0.
\end{align*}
Similar ideas were used in \cite{El-Lo2008,Lo2010} to find dominating one-dimensional maps that can be used to reveal the global attractor of some forms of $F_0.$ The second iterate of $x_{n+1}=x_nf(x_{n-1})+h_n$ was used in \cite{Al2013} to show boundedness, invariant domains, and prove the global attractor based on a $\limsup$ and $\liminf$ argument.
In a recent paper, the authors in \cite{Al-Ca-Ka2025} focused on Schur stability and found that combining the embedding technique with successive expansions in linear systems gives favorable sufficient conditions for Schur stability. This paper formalizes this approach further and complements the results obtained in \cite{Al-Ca-Ka2025} by establishing necessary and sufficient conditions.
\\

This paper is structured as follows: In section two, we clarify the notion of the expansion strategy. Section three considers the linear case, uses the expansion strategy to increase the delay, and establishes the relationship between the original and expanded systems. This develops and complements the algorithm established in \cite{Al-Ca-Ka2025}, which gives an alternative to Jury's algorithm. In section four, we consider the case of nonlinear systems. A linearization process allows us to investigate local stability based on the results of section three. Furthermore, we integrate the embedding technique and expansion strategy to obtain new results in global stability. In section five, we present several applications to show the power and effectiveness of the obtained results. Finally, we close this paper with a conclusion section.

\section{The expansion strategy}\label{Sec-Expansion}
When successive expansions are done as in Eq. \eqref{Eq-TheExpansion}, we obtain a sequence of associated systems of the form
\begin{equation}\label{Eq-TheGeneralExpansion}
y_{n+1}=F_m(y_{n-m},y_{n-m-1},\ldots,y_{n-m-k+1}),
\end{equation}
where $m=0$ represents the original system in Eq. \eqref{Eq-F0}. Obviously, $F_0$ requires the initial conditions $x_{-k+1},\ldots,x_0,$ while for $m\geq 1,$ $F_m$ requires the initial conditions 
$$y_{-m-k+1},\ldots,y_{-m},\ldots,y_0.$$
We clarify the relationship between the solutions of Eqs. \eqref{Eq-F0} and \eqref{Eq-TheGeneralExpansion}. The domain of $F_0$ in Eq. \eqref{Eq-F0} is $\mathbb{R}_+^k$ while the domain of $F_m$ in Eq. \eqref{Eq-TheGeneralExpansion} is $\mathbb{R}_+^{k+m}$; however, the difference in dimension between the two equations can be ignored when we consider solutions in scalar form rather than vector form.  First, we clarify fixed points. Let $ x$ be a fixed point of $F_0.$ Because
\begin{equation}\label{Eq-PreservingFixedPoints}
F_{m+1}(x,x,\ldots,x)=F_m(F_0(x,\ldots,x),x,\ldots,x)=F_m(x,x,\ldots,x),
\end{equation}
it is obvious that a fixed point of $F_0$ is a fixed point of $F_{m}$ for all $m$. But the converse is not necessarily true, as we clarify by considering the nonlinear example
\begin{equation}\label{Eq-ExampleOfCreatingNewFixedPoints}
x_{n+1}=F_0(x_n,x_{n-1})=x_{n-1}\exp{(2-x_n)}+1.
\end{equation}
For $m=1,$ we have $x_{n+1}=F_0(F_0(x_{n-1},x_{n-2}),x_{n-1})$. The fixed points are solutions to
$$x=F_0(y,x)=x\exp{(2-y)}+1,\quad\text{where}\quad y=F_0(x,x)=x\exp{(2-x)}+1.$$
This gives us $\bar x_1\approx 2.509$ or $\bar x_2\approx 1.237.$ $\bar x_1$ is a fixed point of both $F_0$ and $F_1,$ while $\bar x_2$ is a fixed point of $F_1$ only. Next, we clarify solutions in general,  and give the result in the next proposition.

\begin{proposition}\label{Pr-ChangeInFixedPoints}
A solution of Eq. \eqref{Eq-F0} can be made a solution of Eq. \eqref{Eq-TheGeneralExpansion}, but the converse is not necessarily true. In particular, a fixed point of $F_0$ is a fixed point of $F_{m}$ for all $m\geq 0,$ but the converse is not necessarily true. 
\end{proposition}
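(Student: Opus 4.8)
The plan is to treat the two assertions together, because the statement about fixed points is precisely the statement about solutions specialized to constant sequences. The driving identity is the recursion that defines the expansion: by construction, the $(m{+}1)$-st map is obtained from the $m$-th by substituting the leading argument through $F_0$, namely
\begin{equation*}
F_{m+1}(y_{n-m-1},\ldots,y_{n-m-k})=F_m\bigl(F_0(y_{n-m-1},\ldots,y_{n-m-k}),\,y_{n-m-1},\ldots,y_{n-m-k+1}\bigr),
\end{equation*}
which is exactly what collapses to Eq.~\eqref{Eq-PreservingFixedPoints} when every entry equals a common value $x$. I would record this recursion first, since it is the only structural input the argument needs.

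For the forward implication I would induct on $m$. The base case $m=0$ is immediate. For the inductive step, assume $(x_n)$ is a solution of Eq.~\eqref{Eq-F0} that already satisfies $x_{n+1}=F_m(x_{n-m},\ldots,x_{n-m-k+1})$ on the index range where every term is defined. Since $(x_n)$ solves Eq.~\eqref{Eq-F0}, shifting the original recurrence gives $x_{n-m}=F_0(x_{n-m-1},\ldots,x_{n-m-k})$; feeding this into the leading slot via the recursion above yields $x_{n+1}=F_{m+1}(x_{n-m-1},\ldots,x_{n-m-k})$, i.e.\ the $F_{m+1}$-equation. Specializing to the constant sequence $x_n\equiv x$ reproduces the fixed-point claim and matches the direct computation in Eq.~\eqref{Eq-PreservingFixedPoints}.

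For both converse statements a single counterexample suffices, and I would simply invoke the system in Eq.~\eqref{Eq-ExampleOfCreatingNewFixedPoints}: there $\bar x_2\approx 1.237$ is a fixed point of $F_1$ but not of $F_0$, so the constant sequence $(\ldots,\bar x_2,\bar x_2,\ldots)$ solves the $F_1$-system yet violates the $F_0$-recurrence. This one example refutes both converse implications at once.

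The genuine point of care---and the main conceptual obstacle, rather than a computational one---is the mismatch in dimension: a solution of $F_0$ is determined by $k$ initial conditions, whereas $F_m$ needs $k+m$. I would handle this exactly as the surrounding discussion indicates, by passing to the scalar (sequence) description: a forward orbit of Eq.~\eqref{Eq-F0} furnishes one scalar sequence that reads as a solution of Eq.~\eqref{Eq-TheGeneralExpansion} on the indices where the $F_m$-recurrence is defined, equivalently by extending the orbit backward to supply the surplus initial data. Making this identification precise is what justifies the phrase ``can be made a solution,'' and it is simultaneously the reason the converse fails: the extra initial conditions of $F_m$ are free parameters that produce orbits carrying no $F_0$-preimage structure.
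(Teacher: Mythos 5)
Your proposal is correct and follows essentially the same route as the paper: the defining recursion $F_{m+1}(\,\cdot\,)=F_m(F_0(\,\cdot\,),\,\cdot\,)$ combined with a shift of the original recurrence gives the inductive step, the dimension mismatch is handled by re-reading the scalar orbit (the paper does this concretely by assigning the surplus initial conditions of the expanded equation to the forward iterates $x_1,\ldots,x_{m+1}$ of the $F_0$-orbit), and the example in Eq.~\eqref{Eq-ExampleOfCreatingNewFixedPoints} refutes both converses at once. One caution: your parenthetical alternative of ``extending the orbit backward'' is not generally available because $F_0$ need not be invertible, so the forward-shift formulation---which is both your primary mechanism and the paper's---is the one to keep.
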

\begin{proof}
For each $m\geq 0$, we consider a solution of the $F_{0}$-equation and show that it can be a solution of the $F_{m}$-equation. Let
$$S_0:=\{\underbrace{x_{-k+1},\ldots,x_{-1}, x_0}_{\text{initial conditions}},x_1,\ldots\}$$
be a solution of the $F_0$-equation. For the $F_1$-equation, consider the initial conditions $y_{-k},y_{-k+1},\ldots,y_0,$
where $y_{j-1}=x_j$ for all $j=-k+1$ to $1.$ In this case, we obtain 
$$
  y_1=F_1(y_{-1},\ldots,y_{-k})  = F_1(x_0,x_{-1},\ldots,x_{-k+1})$$
  and consequently, 
  $$y_1=F_0(F_0(x_0,\ldots,x_{-k+1}),x_0,x_{-1},\ldots,x_{-k+2})=F_0(x_1,x_0,\ldots,x_{-k+2})=x_2.$$
By induction on $j$, we obtain $y_j=x_{j+1}$ for all $j>1.$ Observe that $y_0$ is supposed to be a free initial condition in the $F_1$-equation, but we forced $y_0$ to be $x_1.$ Similarly, we do an induction on $m,$ assume $S_0$ is a solution of the $F_m$-equation, then show that it can be a solution of $F_{m+1}$-equation. Indeed, we define the initial conditions $z_{-m-k+j}=x_{-k+1+j}$ for $j=0,\ldots,m+k,$ then
\begin{align*}
  z_1=&F_{m+1}(z_{-m-1},\ldots,z_{-m-k})  \\
  =& F_{m+1}(x_0,x_{-1},\ldots,x_{-k+1})\\
  =&F_m(F_0(x_0,\ldots,x_{-k+1}),x_0,x_{-1},\ldots,x_{-k+2})\\
  =&F_m(x_1,x_0,\ldots,x_{-k+2})=x_{m+2}.
\end{align*}
Similarly, $z_n=x_{n+m+1}$ for all $n=2,\ldots . $  Finally, it is obvious that a fixed point of $F_0$ is a fixed point of $F_{m}$ for all $m\geq 0,$ and the converse is not necessarily true as we clarified in the example of Eq. \eqref{Eq-ExampleOfCreatingNewFixedPoints}. 
\end{proof}
 
Up to this end, we clarified that a solution of Eq. \eqref{Eq-F0} can be a solution of Eq. \eqref{Eq-TheGeneralExpansion}, and Eq. \eqref{Eq-PreservingFixedPoints} emphasizes this fact in particular for fixed points. However, the example given in Eq. \eqref{Eq-ExampleOfCreatingNewFixedPoints} shows that the expansion may create new fixed points.  The following result ensures that the fixed points of $F_m$ and its consequent expansion are the same.   

\begin{proposition}\label{Pr-Equilibrium}
Suppose each one of the maps $F_m$ obtained in the expanded systems of Eq. \eqref{Eq-TheGeneralExpansion} is increasing in its first argument, then $F_{m}$ and $F_{m+1}$ have the same fixed points.  
\end{proposition}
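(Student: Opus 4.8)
The plan is to reduce the statement to a one-dimensional comparison along the diagonal. For each $m$ introduce the scalar map $g_m(x):=F_m(x,x,\ldots,x)$ on $\mathbb{R}_+$; by definition $\bar x$ is a fixed point of $F_m$ exactly when $g_m(\bar x)=\bar x$. Reading the first equality of Eq. \eqref{Eq-PreservingFixedPoints} for an arbitrary $x$ (not necessarily a fixed point of $F_0$) yields the recursion
\begin{equation*}
g_{m+1}(x)=F_m\bigl(F_0(x,\ldots,x),x,\ldots,x\bigr)=F_m\bigl(g_0(x),x,\ldots,x\bigr).
\end{equation*}
Relative to $g_m(x)=F_m(x,x,\ldots,x)$, the only argument that has changed is the first one, which is precisely the slot in which $F_m$ is assumed monotone; this is what makes the hypothesis usable.

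The heart of the argument is the claim that, under the monotonicity hypothesis, $g_m(x)-x$ has the same sign as $g_0(x)-x$ for every $x$ and every $m\geq 0$, which I would prove by induction on $m$. The base case $m=0$ is trivial. For the inductive step, compare $g_{m+1}(x)=F_m(g_0(x),x,\ldots,x)$ with $g_m(x)=F_m(x,x,\ldots,x)$ using that $F_m$ is increasing in its first argument. If $g_0(x)>x$, then monotonicity gives $g_{m+1}(x)\geq g_m(x)$, while the induction hypothesis gives $g_m(x)>x$, whence $g_{m+1}(x)>x$. The case $g_0(x)<x$ is symmetric, and if $g_0(x)=x$ the recursion collapses to $g_{m+1}(x)=g_m(x)=x$ by the induction hypothesis. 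In every case $\operatorname{sign}\bigl(g_{m+1}(x)-x\bigr)=\operatorname{sign}\bigl(g_0(x)-x\bigr)$, which closes the induction.

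Once the sign claim is established the conclusion is immediate: the zero set of $g_m(x)-x$ coincides with the zero set of $g_0(x)-x$ for every $m$, so all the maps $F_m$ share exactly the fixed points of $F_0$. In particular $F_m$ and $F_{m+1}$ have the same fixed points, and the expansion creates no spurious equilibria.

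I expect the only real obstacle to be conceptual rather than computational. One cannot argue directly that a fixed point of $F_m$ is a fixed point of $F_{m+1}$ by substitution, because a priori such a point need not be a fixed point of $F_0$; this is exactly what fails in the example of Eq. \eqref{Eq-ExampleOfCreatingNewFixedPoints} when monotonicity is dropped. The monotonicity hypothesis is used precisely to transport the sign of $g_0(x)-x$ through each expansion step, and the care required is to route every comparison through $F_0$ rather than attempting a one-step relation between $F_m$ and $F_{m+1}$.
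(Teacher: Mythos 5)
Your proof is correct. The underlying mechanism is the same one the paper uses --- the diagonal recursion $g_{m+1}(x)=F_m\bigl(g_0(x),x,\ldots,x\bigr)$ together with monotonicity of each $F_j$ in its first slot --- but the packaging is genuinely different. The paper argues by contradiction: assuming $F_{m+1}$ has a fixed point $x$ that is not a fixed point of $F_m$, it sets $y=F_0(x,\ldots,x)$, telescopes $F_m(y,x,\ldots,x)-F_0(x,\ldots,x)$ into $\sum_{j=0}^m\bigl(F_j(y,x,\ldots,x)-F_j(x,\ldots,x)\bigr)$, and divides by $y-x$ to land on $-1$ being a sum of nonnegative difference quotients. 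Your induction is essentially that telescoping unrolled one step at a time, but run forward rather than by contradiction, and it establishes the stronger pointwise statement that $\operatorname{sign}\bigl(g_m(x)-x\bigr)$ is independent of $m$ for \emph{every} $x$, not merely on the zero set. This buys two small advantages: you never divide by $y-x$ (the paper's division tacitly requires $y\neq x$, which does hold --- otherwise $x$ would be a fixed point of $F_0$ and hence of $F_m$ --- but is not remarked on), and you obtain directly that all the $F_m$ share exactly the fixed points of $F_0$, which is the cleanest reading of the conclusion. Your case analysis with weak inequalities $g_{m+1}(x)\geq g_m(x)$ is also the right one, since the paper later stipulates that ``increasing'' means non-decreasing.
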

\begin{proof}
Suppose $F_{m+1}$ has a fixed point different from the fixed points of $F_m,$ say $F_{m+1}(x,\ldots,x)=x.$ In this case, we obtain
$$x=F_m(y,x,\ldots,x),\quad\text{where}\quad y=F_0(x,x,\ldots,x).$$
Write
\begin{align*}
 y-x=&-\left(F_m(y,x,\ldots,x)-F_0(x,x,\ldots,x)\right)\\
 =& -\sum_{j=0}^m \left(F_j(y,x,\ldots,x)-F_j(x,x,\ldots,x)\right)\\
 -1=& \sum_{j=0}^m \frac{F_j(y,x,\ldots,x)-F_j(x,x,\ldots,x)}{y-x}.
\end{align*}
This is a contradiction since the right-hand side is positive. 
\end{proof}
\section{The linear case}\label{Sec-Linear}
In this section, we utilize the expansion strategy to complement the results obtained in \cite{Al-Ca-Ka2025}. Consider the $k$-dimensional linear equation
\begin{equation}\label{Eq-Linear1}
x_{n+1}=F_0(x_n,\ldots,x_{n-k+1})=\sum_{j=0}^{k-1}a_jx_{n-j},\quad k>1,\;a_j\in \mathbb{R}\;\text{and}\; a_0\neq 0.
\end{equation}
After substituting in place of $x_n,$ we obtain the one-step expansion
\begin{align*}
y_{n+1}=a_0\sum_{j=0}^{k-1}a_jy_{n-j-1}+\sum_{j=1}^{k-1}a_jy_{n-j},
\end{align*}
which is the $(k+1)$-dimensional system
\begin{equation}\label{Eq-Linear2}
\begin{split}
y_{n+1}=&F_1(y_{n-1},\ldots,y_{n-k+1},y_{n-k})\\
=&a_0a_{k-1}y_{n-k}+\sum_{j=0}^{k-2}(a_0a_j+a_{j+1})y_{n-j-1}.
\end{split}
\end{equation}
Repeat the process $m$-times, where $m=0$ means Eq. \eqref{Eq-Linear1}. By induction, we obtain
\begin{equation}\label{Eq-Linear3}
\begin{split}
z_{n+1}=&F_m(z_{n-m},\ldots,z_{n-k-m+1})\\
=&\sum_{j=0}^{k-1}b_{m,j}z_{n-m-j},
\end{split}
\end{equation}
where $b_{0,j}=a_j,\; j=0,1,\ldots,k-1$ and
\begin{equation}\label{Eq-The-b-System}
\begin{cases}
b_{m+1,j}=&b_{0,j}b_{m,0}+b_{m,j+1},\quad j=0,\ldots,k-1\\
b_{m+1,k-1}=&b_{m,0}b_{0,k-1}
\end{cases}
\end{equation}
for all $m\in\mathbb{N}=\{0,1,\ldots\}.$ Based on this, we obtain the following crucial lemma: 
\begin{lemma}\label{Lem-Crucial}
Let the column vector $V_0=[a_0,a_1,\ldots,a_{k-1}]^t$ denote the coefficients of $F_0$ in Eq. \eqref{Eq-Linear1}. The coefficients of $F_m$ in the expanded system \eqref{Eq-Linear3} are obtained by the column vector $V_m,$ where 
\begin{equation}\label{Eq-Vm}
V_{m}=(J_0^t)^{m}V_0,\quad m=1,2,\ldots.
\end{equation}
\end{lemma}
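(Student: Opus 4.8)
The plan is to recognize that the single-step update of the coefficient vector prescribed by the recurrence \eqref{Eq-The-b-System} is itself a fixed linear map, to identify its matrix with $J_0^t$, and then to iterate. Here $J_0$ is the companion (Jacobian) matrix of the linear recurrence \eqref{Eq-Linear1},
$$J_0=\begin{pmatrix} a_0 & a_1 & \cdots & a_{k-2} & a_{k-1}\\ 1 & 0 & \cdots & 0 & 0\\ 0 & 1 & \cdots & 0 & 0\\ \vdots & & \ddots & & \vdots\\ 0 & 0 & \cdots & 1 & 0\end{pmatrix},$$
so that its transpose $J_0^t$ has first column $V_0=[a_0,\ldots,a_{k-1}]^t$ and superdiagonal entries all equal to $1$.

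First I would write \eqref{Eq-The-b-System} in matrix form. Reading the two cases carefully — the first line is meant for $j=0,\ldots,k-2$, while the second handles the top index $j=k-1$, for which $b_{m,k}$ would be undefined — and using $b_{0,j}=a_j$, the update $V_m\mapsto V_{m+1}$ is
$$b_{m+1,j}=a_j\,b_{m,0}+b_{m,j+1}\ \ (0\le j\le k-2),\qquad b_{m+1,k-1}=a_{k-1}\,b_{m,0}.$$
In each coordinate the term $a_j\,b_{m,0}$ places $a_j$ into the first column, while the shift $b_{m,j+1}$ places a $1$ onto the superdiagonal. Collecting these coordinate rules into a single matrix $M$ yields precisely the matrix whose first column is $[a_0,\ldots,a_{k-1}]^t$ and whose superdiagonal is all ones, that is $M=J_0^t$. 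Hence $V_{m+1}=J_0^t V_m$.

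The conclusion then follows by a one-line induction on $m$: the base case $m=0$ is the trivial identity $V_0=(J_0^t)^0 V_0$, and the inductive step gives $V_{m+1}=J_0^t V_m=J_0^t(J_0^t)^m V_0=(J_0^t)^{m+1}V_0$. The only genuine obstacle is bookkeeping rather than mathematics: one must reconcile the overlapping index ranges in \eqref{Eq-The-b-System} and keep the companion-matrix convention consistent about which variable occupies a row versus a column, since a stray transpose or an index shift in the wrong place would interchange the first column with the first row and destroy the identification $M=J_0^t$.
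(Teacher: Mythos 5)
Your proposal is correct and follows essentially the same route as the paper: both proofs write the recurrence \eqref{Eq-The-b-System} as a single linear update $V_{m+1}=J_0^tV_m$ and then iterate. Your version is simply more explicit about the index bookkeeping (correctly reading the first line of \eqref{Eq-The-b-System} as applying for $j=0,\ldots,k-2$) and about verifying entrywise that the update matrix is $J_0^t$, which the paper asserts without detail.
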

\begin{proof}
Write the coefficients obtained in System \eqref{Eq-The-b-System} as a linear system. We obtain
$$\left[
    \begin{array}{c}
      b_{m+1,0} \\
      b_{m+1,1} \\
      \vdots \\
      b_{m+1,k-2} \\
      b_{m+1,k-1} \\
    \end{array}
  \right]=J_0^t\left[
    \begin{array}{c}
      b_{m,0} \\
      b_{m,1} \\
      \vdots \\
      b_{m,k-2} \\
      b_{m,k-1} \\
    \end{array}
  \right],
 $$
 where $J_0$ is the Jacobian matrix of $F_0$ and $J_0^t$ is the transpose of $J_0.$
Therefore, we have 
$$V_{m+1}=J_0^tV_m,\quad \text{where}\quad  
V_m=[b_{m,0},b_{m,1},\ldots,b_{m,k-1}]^t\;\; \text{for all}\;\; m\in\mathbb{N}.$$
The solution of this equation gives Eq. \eqref{Eq-Vm}, and the proof is complete. 
\end{proof}
Up to this end, it becomes computationally simple and straightforward that the coefficients of $F_m$ in Eq. \eqref{Eq-Linear3} are captured in the vector $V_m,$ which can be computed based on Eq. \eqref{Eq-Vm}. 
\\

\begin{remark}
We considered $ a_0\neq 0$ in our analysis and discussion for convenience. However, if $a_0=0$, then there is no need to establish a new process. For instance, when $a_0=0$ and $a_1\neq 0,$ the same process works, but in this case, $F_0=F_1$ and changes start to affect $F_2.$ In general, if the first non-zero coefficient is $a_i,$ for some $ 0<i<k-1,$ then $F_0=F_1=\cdots=F_i,$ and the expansion effect starts to take place at $F_{i+1}.$
\end{remark}

As confirmed in Proposition \ref{Pr-ChangeInFixedPoints}, initial conditions of Eq. \eqref{Eq-Linear1} must be $x_{-k+1},\ldots,x_{-1},x_0,$ while  initial conditions of Eq. \eqref{Eq-Linear3} must be $z_{-m-k+1},\ldots,z_{-1},z_0.$ Therefore, there is more freedom in assigning the initial conditions of Eq. \eqref{Eq-Linear3}. Now, we proceed to investigate the effect of the expansion process on the stability of the zero equilibrium.
\subsection{Stability and constraints on coefficients}
Denote the Jacobian matrix of the linear system in Eq. \eqref{Eq-Linear3} by $J_m$ and its spectrum by $Spec(J_m).$ At $m=0,$ it is clear that we can write $J_0=[a_{i,j}]$, where $a_{1,j}=a_{j-1}$ for all $j=1,\ldots,k,$ $a_{i+1,i}=1$ for $i=1,\ldots,k-1$, and $a_{i,j}=0$ otherwise.  The matrix $J_0 $ is, in fact, a form of the companion matrices of the characteristic polynomial (we denote it by ``the companion matrix").  The zero equilibrium of System \eqref{Eq-Linear1} is globally attracting if $Spec(J_0)$ is contained in the open unit disk $\mathbb{D}.$ Also,   observe that if we write $J_1=[b_{i,j}]$, then $b_{1,j}=a_0a_{j-2}+a_{j-1},$ for all $j=2,\ldots,k-1,$ $b_{1,k}=a_0a_{k-1},$ $b_{i+1,i}=1$ for $i=1,\ldots,k$, and $b_{i,j}=0$ otherwise. In general, the size of $J_m$ expands based on $m$, and the relationship between $J_m$ and the coefficients of the linear equation obtained by $F_m$ can be obtained from Lemma \ref{Lem-Crucial}. For the reader's convenience, we show the structure of $J_m$ below.

$$ J_m=
\begin{matrix}\begin{bmatrix}[1.3]
\bovermat{m zeros}{0 & 0 &0& \cdots & 0 &} \bovermat{$V_m^t$}{ b_{m,0}&b_{m,1}&\ldots&b_{m,k-1} } \\
1&0&0&\cdots&0&0&0&\ldots&0\\
0&1&0&\cdots&0&0&0&\ldots&0\\
\vdots&\ddots&\ddots&\ddots&\ddots&\ddots&\ddots&\ddots&\vdots\\
0&0&0&0&0&\cdots&1&0&0\\
0&0&0&0&0&\cdots&0&1&0\\
\end{bmatrix}_{(m+k)\times(m+k)}.
\end{matrix}$$
By calibrating the initial conditions in System  \eqref{Eq-Linear2}, Proposition \ref{Pr-ChangeInFixedPoints} confirms that every solution of System \eqref{Eq-Linear1} can be a solution of System  \eqref{Eq-Linear2}. So, it is natural to expect $Spec(J_0)\subseteq Spec(J_1).$ Indeed, this can be clarified algebraically, and is given in the following proposition.

\begin{proposition}\label{Pr-Spectrum}
Let $J_0$ and $J_1$ be the Jacobian matrices of $F_0$ in System \eqref{Eq-Linear1} and $F_1$ in System \eqref{Eq-Linear1}, respectively. If $Spec(J_0)=\{\lambda_1,\ldots,\lambda_k\},$ then $\displaystyle{Spec(J_1)=Spec(J_0)\cup\{-a_0\}}.$
\end{proposition}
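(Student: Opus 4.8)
The plan is to reduce the claim to a single factorization of characteristic polynomials, exploiting the fact—already noted in the paragraph preceding the statement—that both $J_0$ and $J_1$ are companion matrices. For a companion matrix the characteristic polynomial can be read directly off its first row, so I would begin by recording
\begin{equation*}
p_0(\lambda)=\det(\lambda I-J_0)=\lambda^{k}-\sum_{j=0}^{k-1}a_j\lambda^{k-1-j},
\end{equation*}
and, since $J_1$ is the $(k+1)\times(k+1)$ companion matrix whose first row is $(0,b_{1,0},\ldots,b_{1,k-1})$,
\begin{equation*}
p_1(\lambda)=\det(\lambda I-J_1)=\lambda^{k+1}-\sum_{j=0}^{k-1}b_{1,j}\lambda^{k-1-j}.
\end{equation*}
Here the missing $\lambda^{k}$ term is exactly the fingerprint of the expansion: $F_1$ carries no $y_n$ coefficient, which is why $a_0\neq 0$ is assumed (so the delay genuinely increases).

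Next I would substitute the explicit coefficients $b_{1,j}=a_0a_j+a_{j+1}$ for $0\le j\le k-2$ and $b_{1,k-1}=a_0a_{k-1}$ from System \eqref{Eq-Linear2}, and verify by a direct expansion that
\begin{equation*}
p_1(\lambda)=(\lambda+a_0)\,p_0(\lambda).
\end{equation*}
The mechanism is a clean telescoping: writing out $(\lambda+a_0)p_0(\lambda)=\lambda\,p_0(\lambda)+a_0\,p_0(\lambda)$, the term $a_0\lambda^{k}$ coming from $a_0\,p_0(\lambda)$ cancels the $-a_0\lambda^{k}$ produced by $\lambda\,p_0(\lambda)$, so the $\lambda^{k}$ coefficient of the product vanishes, matching $p_1$. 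For each lower power $\lambda^{k-1-j}$, the coefficient of the product is $-(a_{j+1}+a_0a_j)=-b_{1,j}$ when $0\le j\le k-2$, and $-a_0a_{k-1}=-b_{1,k-1}$ for the constant term, which is precisely the coefficient appearing in $p_1$. Once the factorization is in hand, the conclusion is immediate: the roots of $p_1$, counted with multiplicity, are exactly the roots of $p_0$ together with the single extra root $\lambda=-a_0$, so $Spec(J_1)=Spec(J_0)\cup\{-a_0\}$.

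I expect the main obstacle to be purely organizational rather than conceptual, namely the index bookkeeping when re-indexing the two sums $\sum a_j\lambda^{k-j}$ and $\sum a_0a_j\lambda^{k-1-j}$ so that like powers of $\lambda$ are collected correctly; a shift $i=j-1$ aligns them. One further point worth flagging is that the union in the statement should be understood at the level of multisets, since $-a_0$ may happen to coincide with an eigenvalue of $J_0$; the factorization $p_1=(\lambda+a_0)p_0$ records this correctly through multiplicities. As an alternative (structural) route one could instead lift eigenvectors: for the companion matrix $J_0$ an eigenvalue $\lambda$ has eigenvector $(\lambda^{k-1},\ldots,\lambda,1)^{t}$, and one checks that the corresponding Vandermonde-type vector for $J_1$ is again an eigenvector for the same $\lambda$, with one additional eigenvector accounting for $-a_0$; but the characteristic-polynomial factorization is shorter and makes the role of $a_0\neq 0$ transparent, so I would present that as the proof.
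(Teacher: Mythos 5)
Your proposal is correct and follows essentially the same route as the paper: both compute the characteristic polynomials of the companion matrices $J_0$ and $J_1$ explicitly and verify the factorization $\det(\lambda I-J_1)=(\lambda+a_0)\det(\lambda I-J_0)$, from which the claim about the spectra is immediate. Your additional remarks on multiset multiplicity and the eigenvector alternative go slightly beyond the paper's proof but do not change the argument.
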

\begin{proof}
Suppose $\lambda\in Spec(J_0),$ then $\det(\lambda I-J_0)=0.$ Based on our notation in $J_0$ and $J_1,$ we obtain
$$
\det(\lambda I-J_0)=\lambda^k-\sum_{j=0}^{k-1}a_j\lambda^{k-j-1}$$
and 
$$\det(\lambda I-J_1)= \lambda^{k+1}-\sum_{j=0}^{k-2}(a_0a_j+a_{j+1})\lambda^{k-j-1}-a_0a_{k-1}.$$
Therefore, we have 
\begin{align*}
\det(\lambda I-J_1)=& \lambda^{k+1}-a_0\sum_{j=0}^{k-1}a_j\lambda^{k-j-1}-\sum_{j=0}^{k-2}a_{j+1}\lambda^{k-j-1}\\
=&(\lambda+a_0)\det(\lambda I-J_0).
\end{align*}
This clarifies the relationship between the spectra of $J_0$ and $J_1.$ Note that 
the trace of $J_1$ is an alternative way to observe that the new eigenvalue is $\lambda_{k+1}=-a_0.$ Indeed, we have  $Trace(J_0)=\sum_{j=1}^k\lambda_j$ while $Trace(J_1)=0.$ 
\end{proof}
Proposition \ref{Pr-Spectrum} tells us that it is possible to have the zero fixed point of $F_0$ in System \eqref{Eq-Linear1} stable, while the zero fixed point of $F_1$ in System \eqref{Eq-Linear2} is unstable. Example \ref{Ex-Easy1} illustrates this fact.
\begin{example}\label{Ex-Easy1}
Consider Eq. \eqref{Eq-Linear1} with $k=2$ and $a_0=\frac{5}{4},a_1=-\frac{3}{8}.$ This gives us $\lambda_1=\frac{1}{2}$ and $\lambda_2=\frac{3}{4}.$ On the other hand, Eq. \eqref{Eq-Linear2} becomes
$$y_{n+1}=\frac{19}{16}y_{n-1}-\frac{15}{32}y_{n-2}.$$
The eigenvalues are  $\lambda_1=\frac{1}{2},$  $\lambda_2=\frac{3}{4}$ and $\lambda_3=-\frac{5}{4}$. Since $\lambda_1,\lambda_2\in\mathbb{D},$ but $\lambda_3\notin \overline{\mathbb{D}},$ the fixed point $\bar x =0$ in System \eqref{Eq-Linear1} is globally attracting, while $\bar y=0$ is unstable for System \eqref{Eq-Linear2}. 
\end{example}
Proposition \ref{Pr-Spectrum} and Example \ref{Ex-Easy1} show that to have stability in both systems, it is necessary to have the absolute sum of the spectrum less than one, i.e., $-1<\sum\lambda_i<1.$ This takes us to the question: what happens when we increase the delay further through the same expansion process?  To address this question, define the polynomials $p_m$ and $q_m$ as
\begin{align}\label{Eq-Def-Pm}
\begin{split}
p_m(x)=&x^{k+m}-\sum_{j=0}^{k-1}b_{m,j}x^{k-j-1},\\
q_m(x)=&x^m+\sum_{i=0}^{m-1}b_{i,0}x^{m-i-1} .
\end{split}
\end{align}
By convention,  we consider $q_m(x)=1$ when $m=0.$  Recall that $J_m$ is the Jacobian matrix of System \eqref{Eq-Linear3}, and observe that $p_m$ is the characteristic polynomial of $J_m.$ The following relationship between $p_m$ and $q_m$ is needed in the sequel.
\begin{lemma}\label{Lem-Pm}
Consider $p_m$ and $q_m$ as defined in Eqs. \eqref{Eq-Def-Pm}. We have $ p_m(x)=p_0(x)q_m(x)$ for all $m\in \mathbb{N}.$ 
\end{lemma}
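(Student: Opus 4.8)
The plan is to prove the identity by induction on $m$, with the engine being a single first-order recurrence in $m$ that both sides of the claimed factorization satisfy. The base case $m=0$ is immediate: by the convention $q_0(x)=1$, the claim reads $p_0(x)=p_0(x)\cdot 1$. Everything then reduces to showing that the two families $\{p_m\}$ and $\{q_m\}$ obey compatible recurrences.

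First I would derive a recurrence for $p_m$ by itself. Starting from the definition $p_{m+1}(x)=x^{k+m+1}-\sum_{j=0}^{k-1}b_{m+1,j}x^{k-j-1}$ and substituting the coefficient recurrence \eqref{Eq-The-b-System} in the uniform form $b_{m+1,j}=b_{0,j}b_{m,0}+b_{m,j+1}$ for $j=0,\ldots,k-1$ (with the boundary convention $b_{m,k}=0$, which makes the second line of \eqref{Eq-The-b-System} the case $j=k-1$), the summation splits into two pieces. One piece is proportional to $\sum_{j=0}^{k-1}b_{0,j}x^{k-j-1}=x^{k}-p_0(x)$, and the other, after reindexing $i=j+1$ and pulling out a factor of $x$, becomes a copy of $\sum_{j=0}^{k-1}b_{m,j}x^{k-j-1}=x^{k+m}-p_m(x)$. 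Collecting terms, I expect to arrive at
$$p_{m+1}(x)=x\,p_m(x)+b_{m,0}\,p_0(x).$$
As a consistency check, the case $m=0$ gives $p_1=(x+b_{0,0})p_0=(x+a_0)p_0$, which recovers Proposition \ref{Pr-Spectrum}. This index bookkeeping — keeping track of the boundary term $j=k-1$ and correctly absorbing the $b_{m,j+1}$ terms into the shifted copy of $x^{k+m}-p_m(x)$ — is the one genuinely computational step and the place where off-by-one errors are most likely, so it is where I would be most careful.

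In parallel I would record the analogous and far easier recurrence for $q_m$. Reading directly off the definition $q_m(x)=x^m+\sum_{i=0}^{m-1}b_{i,0}x^{m-i-1}$, one computes $x\,q_m(x)=x^{m+1}+\sum_{i=0}^{m-1}b_{i,0}x^{m-i}$, and appending the $i=m$ term gives $q_{m+1}(x)=x\,q_m(x)+b_{m,0}$, with $q_0=1$.

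Finally I would run the induction. Assuming $p_m=p_0\,q_m$, the two recurrences combine to give $p_{m+1}=x\,p_m+b_{m,0}\,p_0=x\,(p_0 q_m)+b_{m,0}\,p_0=p_0\,(x\,q_m+b_{m,0})=p_0\,q_{m+1}$, closing the induction. Equivalently, one can observe that both $p_m/p_0$ and $q_m$ satisfy the same first-order recurrence $u_{m+1}=x\,u_m+b_{m,0}$ with the same initial value $u_0=1$, hence coincide. The substantive work is entirely contained in establishing the $p$-recurrence; once that is in hand, the factorization follows immediately.
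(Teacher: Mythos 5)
Your proposal is correct and takes essentially the same route as the paper: an induction on $m$ driven by the recurrences $q_{m+1}(x)=x\,q_m(x)+b_{m,0}$ and $p_{m+1}(x)=x\,p_m(x)+b_{m,0}\,p_0(x)$, the latter coming from the coefficient system \eqref{Eq-The-b-System}. The only difference is organizational — the paper expands $p_0q_{m+1}$ down to $p_{m+1}$, while you isolate the $p$-recurrence first and then combine — which does not change the substance of the argument.
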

\begin{proof}
We prove this relationship by induction on $m.$ It is true by convention at $m=0,$ and Proposition \ref{Pr-Spectrum} gives the verification at $m=1.$ We assume the relationship is valid for $m,$ and we verify it for $m+1.$ We have
\begin{align*}
p_0(x)q_{m+1}(x)=& p_0(x)\left(x^{m+1}+\sum_{i=0}^{m}b_{i,0}x^{m-i}\right)\\
=&p_0(x)\left(x^{m+1}+x\sum_{i=0}^{m-1}b_{i,0}x^{m-i-1}+b_{m,0}\right)\\
=&p_0(x)\left(xq_m(x)+b_{m,0}\right).
\end{align*}
Now, by the induction argument, we have $p_m(x)=p_0(x)q_m(x).$ Therefore, we proceed as
\begin{align*}
p_0(x)q_{m+1}(x)=&xp_m(x)+b_{m,0}p_0(x)\\
=&x\left(x^{k+m}-\sum_{j=0}^{k-1}b_{m,j}x^{k-j-1}\right) +\left(x^k-\sum_{j=0}^{k-1}a_{j}x^{k-j-1}\right)b_{m,0}\\
=&x^{k+m+1}-\sum_{j=0}^{k-2}\left(b_{m,j-1}+a_jb_{m,0}\right)x^{k-j-1} -a_{k-1}b_{m,0}\\
=&p_{m+1}(x).
\end{align*}
\end{proof}
Lemma \ref{Lem-Pm} describes the spectrum of $J_m$ as the union of the spectrum of $J_0$ and the zeros of $q_m.$ For instance, we already found that  $Spec(J_1)=Spec(J_0)\cup\{-a_0\}.$ Also, it is easy to check that   $Spec(J_2)$ is the union of $Spec(J_0)$ and the zeros of $q_2(x)=x^2+a_0x+a_0^2+a_1.$ 
\\

Note that the algorithm provided in \cite{Al-Ca-Ka2025} says if the $\ell_1$-norm of $p_m$ is less than two for some $m\in \mathbb{N},$ then the equilibrium solution is globally attracting in both systems \eqref{Eq-Linear3} and \eqref{Eq-Linear1}. This was a sufficient condition. However, our results here enable us to provide sufficient and necessary conditions. Before we give the main results of this section, we illustrate this process by revisiting Example \ref{Ex-Easy1}. We do the process for up to $m=4$ and summarize the computations in Table \ref{Tab-1} and Table \ref{Tab-2}.
\begin{table}[H]
\caption{This table shows the obtained linear systems and the $\ell_1$-norm of their characteristic polynomials after applying the expansion technique for up to $m=4.$ }\label{Tab-1}
\centering
\begin{tabular}{lclcc}
\hline\hline\\ [-1.5ex]
 $m$ &&System&&$\|p_m\|_{\ell_1}$ \\ [0.5ex]
\hline
&&&&\\
 $0$& &$x_{n+1}=\frac{5}{4}x_n-\frac{3}{8}x_{n-1}$&& $2.625$\\[2ex]
$1$&& $x_{n+1}=\frac{19}{16}x_{n-1}-\frac{15}{32}x_{n-2}$&&  $2.656$ \\[2ex]
$2$&& $x_{n+1}=\frac{65}{64}x_{n-2}-\frac{57}{128}x_{n-3}$&&  $2.461$ \\[2ex]
$3$&& $x_{n+1}=\frac{211}{256}x_{n-3}-\frac{195}{512}x_{n-4}$&&  $2.205$ \\[2ex]
$4$&& $x_{n+1}=\frac{665}{1024}x_{n-4}-\frac{633}{2048}x_{n-5}$&&  $1.958$ \\[2ex]
\hline
\end{tabular}
\end{table}

Now, we give the main results of this section.

\begin{theorem}\label{Th-Stability1}
Consider the linear systems in Eq. \eqref{Eq-Linear1} and Eq. \eqref{Eq-Linear3}. Each of the following holds true:
\begin{description}
\item{(i)} If the zero equilibrium of Eq. \eqref{Eq-Linear3} is globally attracting for some $m,$ then the zero equilibrium of Eq. \eqref{Eq-Linear1} is globally attracting.
\item{(ii)} If the zero equilibrium of Eq. \eqref{Eq-Linear1} is globally attracting, and the zeros of $q_m$
are in $\mathbb{D}$, then the zero equilibrium of Eq. \eqref{Eq-Linear3} is globally attracting.
\end{description}
\end{theorem}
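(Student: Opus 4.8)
The plan is to reduce both parts to the polynomial factorization $p_m(x)=p_0(x)q_m(x)$ supplied by Lemma \ref{Lem-Pm}, read through the standard fact that, for a \emph{linear} system, global attractivity of the zero equilibrium is equivalent to Schur stability of its Jacobian. Concretely, the general solution of Eq. \eqref{Eq-Linear3} is a finite linear combination of terms of the form $n^{s}\lambda^{n}$ taken over the roots $\lambda$ of its characteristic polynomial $p_m$ (with $s$ below the multiplicity of $\lambda$), and likewise for Eq. \eqref{Eq-Linear1} with $p_0$. Such a combination tends to $0$ for every choice of initial conditions if and only if $|\lambda|<1$ for each root, that is, if and only if $Spec(J_m)\subseteq\mathbb{D}$ (respectively $Spec(J_0)\subseteq\mathbb{D}$). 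So first I would record this equivalence explicitly for both systems; everything else is then a matter of comparing root sets.

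For part (i), I would assume the zero equilibrium of Eq. \eqref{Eq-Linear3} is globally attracting, so every root of $p_m$ lies in $\mathbb{D}$. Since Lemma \ref{Lem-Pm} gives $p_m=p_0q_m$, every root of $p_0$ is in particular a root of $p_m$, hence lies in $\mathbb{D}$. Thus $Spec(J_0)\subseteq\mathbb{D}$, and by the equivalence above the zero equilibrium of Eq. \eqref{Eq-Linear1} is globally attracting. Here only the divisibility $p_0\mid p_m$ is used, not the full description of $q_m$.

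For part (ii), I would assume the zero equilibrium of Eq. \eqref{Eq-Linear1} is globally attracting, so all roots of $p_0$ lie in $\mathbb{D}$, and add the hypothesis that the zeros of $q_m$ lie in $\mathbb{D}$. The factorization $p_m=p_0q_m$ then shows that the root set of $p_m$ is the union of the root sets of $p_0$ and $q_m$ (counted with multiplicity), all of which are inside $\mathbb{D}$. Hence $Spec(J_m)\subseteq\mathbb{D}$, and the zero equilibrium of Eq. \eqref{Eq-Linear3} is globally attracting.

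The mathematical content is essentially already contained in Lemma \ref{Lem-Pm}, so I do not anticipate a serious obstacle; the only point demanding care is the linear-systems equivalence between global attraction and the spectral condition, which I would state precisely (including the treatment of repeated eigenvalues through the $n^{s}\lambda^{n}$ factors) rather than leave implicit, since it is the bridge that converts the algebraic factorization into the dynamical conclusion.
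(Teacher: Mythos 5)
Your proposal is correct and follows essentially the same route as the paper: the paper's proof also reduces both parts to the factorization $p_m=p_0q_m$ from Lemma \ref{Lem-Pm} together with the fact that $p_0$ and $p_m$ are the characteristic polynomials of $J_0$ and $J_m$. The only difference is that you make explicit the standard equivalence between global attraction of a linear system and its spectrum lying in $\mathbb{D}$, which the paper leaves implicit.
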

\begin{proof}
Both cases follow directly from the relationship between the polynomials $p_0$ and $p_m$ that we established in Lemma \ref{Lem-Pm}. Recall that $p_0$ and $p_m$ are the characteristic polynomials of the Jacobian matrices in systems  \eqref{Eq-Linear1} and \eqref{Eq-Linear3}, respectively.  
\end{proof}
\begin{table}[H]
\caption{This table shows the polynomials $q_m$ defined in Eq. \eqref{Eq-Def-Pm} and guaranteed by Lemma \ref{Lem-Pm}. Also, the zeros of $q_m$ are given in the second column. } \label{Tab-2}
\centering
\begin{tabular}{lclcc}
\hline\hline\\ [-1.5ex]
 $m$ &&$q_m(x)$&& The zeros \\ [0.5ex]
\hline
&&&&\\
 $0$& &$1$&& None\\[2ex]
$1$&& $x+\frac{5}{4}$&&  $-1.250$ \\[2ex]
$2$&& $x^2+\frac{5}{4}x+\frac{19}{16}$&&  $-0.625\pm 0.893i$ \\[2ex]
$3$&& $x^3+\frac{5}{4}x^2+ \frac{19}{16}x + \frac{65}{64}$&&  $-0.103 \pm 0.981i, -1.044$ \\[2ex]
$4$&& $x^4 + \frac{5}{4}x^3 + \frac{19}{16}x^2 + \frac{65}{64}x + \frac{211}{256}$&&  $0.193 \pm 0.901i, -0.818\pm  0.548i$ \\[2ex]
\hline
\end{tabular}
\end{table}
\begin{theorem}\label{Th-MainTheorem}
Consider the polynomial $p_m$ as defined in Eq. \eqref{Eq-Def-Pm}. The zero equilibrium of Eq. \eqref{Eq-Linear1} is globally attracting if and only if there exists $m\in \mathbb{N}$ such that $\|p_m\|_{\ell_1}<2.$
\end{theorem}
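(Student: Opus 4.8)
The plan is to reduce the whole equivalence to a single bookkeeping observation about the coefficient vector $V_m$. Since $p_m(x)=x^{k+m}-\sum_{j=0}^{k-1}b_{m,j}x^{k-j-1}$ is monic and its only nonzero lower-order coefficients are exactly the entries of $V_m=[b_{m,0},\ldots,b_{m,k-1}]^t$, one has the identity $\|p_m\|_{\ell_1}=1+\|V_m\|_1$. Hence the condition $\|p_m\|_{\ell_1}<2$ is \emph{literally} the condition $\|V_m\|_1<1$, and the theorem becomes: the companion matrix $J_0$ is Schur stable if and only if $\|V_m\|_1<1$ for some finite $m$. Both directions then follow from Lemma \ref{Lem-Crucial}, which supplies the power-iteration formula $V_m=(J_0^t)^mV_0$.

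For the forward direction, I would argue that global attraction of the zero equilibrium of Eq. \eqref{Eq-Linear1} means $Spec(J_0)\subset\mathbb{D}$, i.e. the spectral radius of $J_0$ is strictly less than $1$. Because $J_0$ and $J_0^t$ share the same spectrum, $(J_0^t)^m\to 0$ as $m\to\infty$, and therefore $V_m=(J_0^t)^mV_0\to 0$. In particular $\|V_m\|_1\to 0$, so there is a finite index $m$ with $\|V_m\|_1<1$, which is precisely $\|p_m\|_{\ell_1}<2$. The only point requiring care is that convergence to $0$ furnishes a \emph{finite} witness $m$, which is exactly what the statement demands.

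For the converse, suppose $\|p_m\|_{\ell_1}<2$, equivalently $\|V_m\|_1<1$, for some $m$. A short modulus estimate then shows every root of $p_m$ lies in $\mathbb{D}$: if $|\lambda|\ge 1$ were a root, then $|\lambda|^{k+m}\le\sum_{j=0}^{k-1}|b_{m,j}|\,|\lambda|^{k-j-1}\le|\lambda|^{k-1}\|V_m\|_1<|\lambda|^{k-1}\le|\lambda|^{k+m}$, a contradiction. Thus $J_m$ is Schur stable, and since Lemma \ref{Lem-Pm} gives the factorization $p_m=p_0\,q_m$, every root of $p_0$ is a root of $p_m$ and hence lies in $\mathbb{D}$; that is, $J_0$ is Schur stable and the zero equilibrium of Eq. \eqref{Eq-Linear1} is globally attracting. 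Equivalently, one may simply invoke Theorem \ref{Th-Stability1}(i) once Schur stability of $p_m$ is in hand.

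I expect the backward modulus estimate and the accompanying algebra to be routine; the conceptual heart lies in the forward direction, where the content is supplied entirely by Lemma \ref{Lem-Crucial}. The main thing to get right is the translation $\|p_m\|_{\ell_1}=1+\|V_m\|_1$ together with the fact that Schur stability of the companion matrix forces geometric decay $\|V_m\|_1\le C\rho^m$ with $\rho<1$; this both proves existence of a finite $m$ and, as a bonus, explains why the $\ell_1$-norms tabulated in Table \ref{Tab-1} eventually dip below $2$.
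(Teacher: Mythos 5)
Your proposal is correct and follows essentially the same route as the paper: the forward direction via $V_m=(J_0^t)^mV_0\to 0$ when $Spec(J_0)\subset\mathbb{D}$, and the backward direction via $\|p_m\|_{\ell_1}<2\Rightarrow$ zeros of $p_m$ in $\mathbb{D}$ together with Theorem \ref{Th-Stability1}(i). The only difference is that you spell out the modulus estimate showing $\|V_m\|_1<1$ forces the roots of $p_m$ into $\mathbb{D}$, a detail the paper leaves implicit.
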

\begin{proof}
($\Leftarrow$) Suppose there exists $m\in\mathbb{N}$ such that $\|p_m\|_{\ell_1}<2,$  then the zeros of $p_m$ are in $\mathbb{D},$ and the zero equilibrium of Eq. \eqref{Eq-Linear1} is globally attracting by Part (i) of Theorem \ref{Th-Stability1}.

($\Rightarrow$) Suppose the zero equilibrium of Eq. \eqref{Eq-Linear1} is globally attracting. Based on Eq. \eqref{Eq-Vm}, the stability of the zero equilibrium of Eq. \eqref{Eq-Linear1} means $Spec(J_0)=Spec(J_0^t)\subset \mathbb{D},$ and consequently, $\|V_m\|_1$ converges to zero. Hence, there exists a finite number $m$ such that $\|V_m\|_1<1,$ which means that $\|p_m\|_{\ell_1}<2$ as needed.
\end{proof}

To clarify the computational process of Theorem \ref{Th-MainTheorem} further, we revisit the motivational example $x_{n+1}=a_0x_n+a_1x_{n-1}$ that was considered in \cite{Al-Ca-Ka2025}. But here, we illustrate it in terms of the eigenvalues rather than the coefficients. We write the eigenvalues in terms of the coefficients to obtain $x_{n+1}=(\lambda_1+\lambda_2)x_n-\lambda_1\lambda_2x_{n-1}.$ Then we apply the expansion technique for $m=0,1$ and $2.$ We separate the case of real eigenvalues from the case of non-real eigenvalues and plot Fig. \ref{Fig-Eigenvalues} to illustrate how our expansion process captures the eigenvalues based on their location within $\mathbb{D}$.
\definecolor{MyMaroon}{rgb}{128,0,0}
\definecolor{MyOrange}{rgb}{255,87,51}
\definecolor{ffvvqq}{rgb}{1,0.3333333333333333,0}
\definecolor{qqqqff}{rgb}{0.,0.,1.}
\definecolor{ffqqqq}{rgb}{1.,0.,0.}
\definecolor{cqcqcq}{rgb}{0.7529,0.7529,0.7529}
\definecolor{MyGreen}{rgb}{0,0.50196,0}
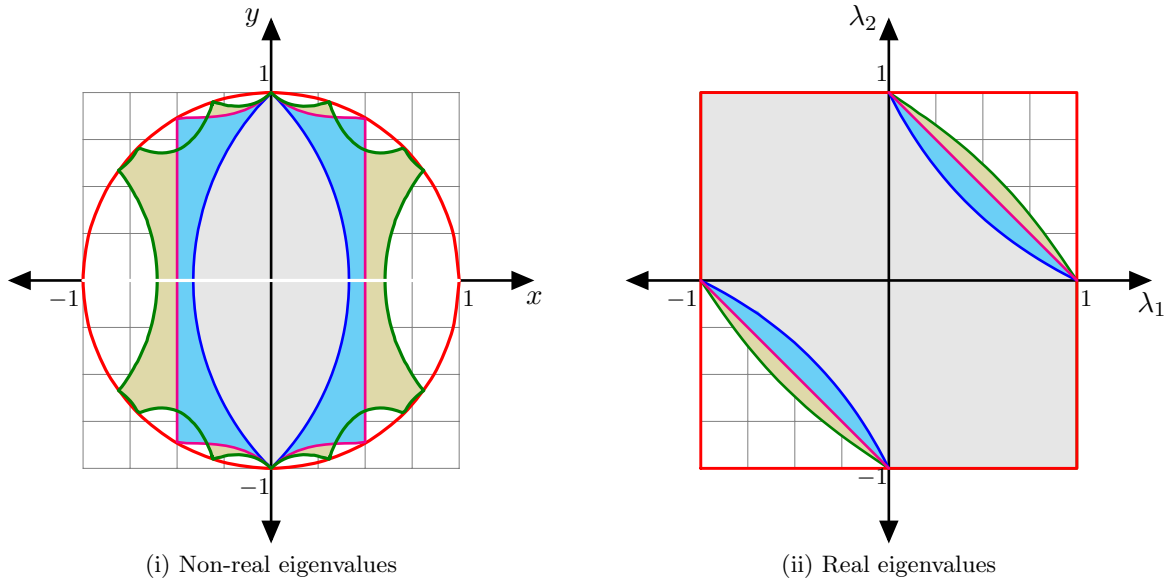
\begin{figure}[htbp]
\centering
\begin{minipage}[t]{0.5\textwidth}
\begin{center}
\begin{tikzpicture}[line cap=round,line join=round,>=triangle 45,x=1.0cm,y=1.0cm,scale=2.5]
\draw[help lines,step=.25] (-1.0,-1.0) grid (1.0,1.0);
\fill[line width=1pt,color=ffqqqq,fill=olive!30,fill opacity=1.0] (0,1)-- (0.01966,0.982)-- (0.0347,0.97075)-- (0.0508,0.9604)-- (0.0685,0.951)-- (0.087,0.943)-- (0.105,0.9374)-- (0.124,0.933)-- (0.151,0.929)-- (0.171,0.927)-- (0.194,0.928)-- (0.212,0.929)-- (0.229,0.93)-- (0.25,0.935)-- (0.265,0.94)-- (0.287,0.944)-- (0.306,0.95)--(0.315,0.933)-- (0.31,0.95)-- (0.322,0.91)-- (0.331,0.89)-- (0.34,0.87)-- (0.35,0.85)-- (0.359,0.8344)-- (0.3704,0.81447)-- (0.383,0.795)-- (0.396,0.778)-- (0.408,0.763)-- (0.422,0.75)-- (0.436,0.735)-- (0.45,0.724)-- (0.464,0.713)-- (0.48,0.703)-- (0.49786,0.695)-- (0.5177,0.6878)-- (0.54,0.683)-- (0.557,0.68)-- (0.579,0.6785)-- (0.60,0.679)-- (0.6213,0.6818)-- (0.644,0.686)-- (0.672,0.694)-- (0.6987,0.7037)--(0.6987,0.7037) -- (0.713,0.694) -- (0.725,0.673) -- (0.737,0.6556) -- (0.748,0.642) -- (0.761,0.62867) -- (0.774,0.616) -- (0.797,0.5965) -- (0.812,0.586) -- (0.794,0.566) -- (0.778,0.544) -- (0.76678,0.5265) -- (0.757,0.512) -- (0.74794,0.497) -- (0.731,0.468) -- (0.722,0.45) -- (0.71,0.432) -- (0.696,0.402) -- (0.675,0.354) -- (0.66,0.321) -- (0.655,0.30) -- (0.6457,0.2724) -- (0.6404,0.254) -- (0.633,0.228) -- (0.626898,0.1999) -- (0.6233,0.18) -- (0.6175,0.1498) -- (0.613,0.119) -- (0.611,0.0988) -- (0.6087,0.075) -- (0.607,0.049) -- (0.606,0.0223) -- (0.6058,0)--(0.606,-0.0223)--(0.607,-0.049)-- (0.6087,-0.075) -- (0.611,-0.0988) -- (0.613,-0.119) -- (0.6175,-0.1498) -- (0.6233,-0.18) -- (0.626898,-0.1999) -- (0.633,-0.228) -- (0.6404,-0.254) -- (0.6457,-0.2724)-- (0.655,-0.30) -- (0.66,-0.321)-- (0.675,-0.354)-- (0.696,-0.402) -- (0.71,-0.432) -- (0.722,-0.45)-- (0.731,-0.468) -- (0.74794,-0.497)-- (0.757,-0.512) -- (0.76678,-0.5265) -- (0.778,-0.544)-- (0.794,-0.566) -- (0.812,-0.586) -- (0.797,-0.5965) -- (0.774,-0.616)-- (0.761,-0.62867)-- (0.748,-0.642)-- (0.737,-0.6556)--(0.725,-0.673) -- (0.713,-0.694)--(0.6987,-0.7037)-- (0.6987,-0.7037)-- (0.672,-0.694)-- (0.644,-0.686)--(0.6213,-0.6818)-- (0.60,-0.679)-- (0.579,-0.6785)-- (0.557,-0.68)-- (0.54,-0.683)-- (0.5177,-0.6878)--(0.49786,-0.695)-- (0.48,-0.703)-- (0.464,-0.713)-- (0.45,-0.724)-- (0.436,-0.735)-- (0.422,-0.75)--(0.408,-0.763)-- (0.396,-0.778)-- (0.383,-0.795)-- (0.3704,-0.81447)-- (0.359,-0.8344)-- (0.35,-0.85)--(0.34,-0.87)-- (0.331,-0.89)-- (0.322,-0.91)-- (0.31,-0.95)--(0.315,-0.933)-- (0.306,-0.95)--(0.287,-0.944)-- (0.265,-0.94)-- (0.25,-0.935)-- (0.229,-0.93)-- (0.212,-0.93)-- (0.194,-0.928)--(0.171,-0.927)-- (0.151,-0.929)-- (0.124,-0.933)-- (0.105,-0.9374)-- (0.087,-0.943)-- (0.0685,-0.951)-- (0.0508,-0.9604)-- (0.0347,-0.97075)-- (0.01966,-0.982)--(0,-1)--
(-0.01966,-0.982)-- (-0.0347,-0.97075)-- (-0.0508,-0.9604)-- (-0.0685,-0.951)-- (-0.087,-0.943)-- (-0.105,-0.9374)-- (-0.124,-0.933)-- (-0.151,-0.929)-- (-0.171,-0.927)-- (-0.194,-0.928)-- (-0.212,-0.929)-- (-0.229,-0.93)-- (-0.25,-0.935)-- (-0.265,-0.94)-- (-0.287,-0.944)-- (-0.306,-0.95)--(-0.315,-0.933)-- (-0.31,-0.95)-- (-0.322,-0.91)-- (-0.331,-0.89)-- (-0.34,-0.87)-- (-0.35,-0.85)-- (-0.359,-0.8344)-- (-0.3704,-0.81447)-- (-0.383,-0.795)-- (-0.396,-0.778)-- (-0.408,-0.763)-- (-0.422,-0.75)-- (-0.436,-0.735)-- (-0.45,-0.724)-- (-0.464,-0.713)-- (-0.48,-0.703)-- (-0.49786,-0.695)-- (-0.5177,-0.6878)-- (-0.54,-0.683)-- (-0.557,-0.68)-- (-0.579,-0.6785)-- (-0.60,-0.679)-- (-0.6213,-0.6818)-- (-0.644,-0.686)-- (-0.672,-0.694)-- (-0.6987,-0.7037)--(-0.6987,-0.7037) -- (-0.713,-0.694) -- (-0.725,-0.673) -- (-0.737,-0.6556) -- (-0.748,-0.642) -- (-0.761,-0.62867) -- (-0.774,-0.616) -- (-0.797,-0.5965) -- (-0.812,-0.586) -- (-0.794,-0.566) -- (-0.778,-0.544) -- (-0.76678,-0.5265) -- (-0.757,-0.512) -- (-0.74794,-0.497) -- (-0.731,-0.468) -- (-0.722,-0.45) -- (-0.71,-0.432) -- (-0.696,-0.402) -- (-0.675,-0.354) -- (-0.66,-0.321) -- (-0.655,-0.30) -- (-0.6457,-0.2724) -- (-0.6404,-0.254) -- (-0.633,-0.228) -- (-0.626898,-0.1999) -- (-0.6233,-0.18) -- (-0.6175,-0.1498) -- (-0.613,-0.119) -- (-0.611,-0.0988) -- (-0.6087,-0.075) -- (-0.607,-0.049) -- (-0.606,-0.0223) -- (-0.6058,0)--(-0.606,0.0223)--(-0.607,0.049)-- (-0.6087,0.075) -- (-0.611,0.0988) -- (-0.613,0.119) -- (-0.6175,0.1498) -- (-0.6233,0.18) -- (-0.626898,0.1999) -- (-0.633,0.228) -- (-0.6404,0.254) -- (-0.6457,0.2724)-- (-0.655,0.30) -- (-0.66,0.321)-- (-0.675,0.354)-- (-0.696,0.402) -- (-0.71,0.432) -- (-0.722,0.45)-- (-0.731,0.468) -- (-0.74794,0.497)-- (-0.757,0.512) -- (-0.76678,0.5265) -- (-0.778,0.544)-- (-0.794,0.566) -- (-0.812,0.586) -- (-0.797,0.5965) -- (-0.774,0.616)-- (-0.761,0.62867)-- (-0.748,0.642)-- (-0.737,0.6556)--(-0.725,0.673) -- (-0.713,0.694)--(-0.6987,0.7037)-- (-0.6987,0.7037)-- (-0.672,0.694)-- (-0.644,0.686)--(-0.6213,0.6818)-- (-0.60,0.679)-- (-0.579,0.6785)-- (-0.557,0.68)-- (-0.54,0.683)-- (-0.5177,0.6878)--(-0.49786,0.695)-- (-0.48,0.703)-- (-0.464,0.713)-- (-0.45,0.724)-- (-0.436,0.735)-- (-0.422,0.75)--(-0.408,0.763)-- (-0.396,0.778)-- (-0.383,0.795)-- (-0.3704,0.81447)-- (-0.359,0.8344)-- (-0.35,0.85)--(-0.34,0.87)-- (-0.331,0.89)-- (-0.322,0.91)-- (-0.31,0.95)--(-0.315,0.933)-- (-0.306,0.95)--(-0.287,0.944)-- (-0.265,0.94)-- (-0.25,0.935)-- (-0.229,0.93)-- (-0.212,0.93)-- (-0.194,0.928)--(-0.171,0.927)-- (-0.151,0.929)-- (-0.124,0.933)-- (-0.105,0.9374)-- (-0.087,0.943)-- (-0.0685,0.951)-- (-0.0508,0.9604)-- (-0.0347,0.97075)-- (-0.01966,0.982)--(0,1)--cycle;
\fill[fill=cyan!50,fill opacity=1.0] (0.0,1) -- (0.0498,0.957) -- (0.0939,0.9288) -- (0.1402,0.907) -- (0.197,0.889) -- (0.2395,0.8797) -- (0.28968,0.873) -- (0.337,0.86899) -- (0.3898,0.86689) -- (0.45036,0.866) -- (0.5,0.86) -- (0.5,-0.86) -- (0.45036,-0.866) -- (0.3898,-0.86689) -- (0.337,-0.86899) -- (0.28968,-0.873) -- (0.2395,-0.8797) -- (0.197,-0.889) -- (0.1402,-0.907) -- (0.0939,-0.9288) -- (0.0498,-0.957) -- (0,-1)--(-0.0498,-0.957) -- (-0.0939,-0.9288) -- (-0.1402,-0.907) -- (-0.197,-0.889) -- (-0.2395,-0.8797) -- (-0.28968,-0.873) -- (-0.337,-0.86899) -- (-0.3898,-0.86689) -- (-0.45036,-0.866) -- (-0.5,-0.86) -- (-0.5,0.86) -- (-0.45036,0.866) -- (-0.3898,0.86689) -- (-0.337,0.86899) -- (-0.28968,0.873) -- (-0.2395,0.8797) -- (-0.197,0.889) -- (-0.1402,0.907) -- (-0.0939,0.9288) -- (-0.0498,0.957) -- (0,1)--cycle;
\fill[fill=gray!20] (0.00151545243597873,0.9984822498342265) -- (0.04130373618398374,0.9569151065907501) -- (0.06981865394145502,0.9249259698502412) -- (0.11497022645483136,0.8699663138170168) -- (0.14260652780950278,0.8333368566128772) -- (0.17251052290282334,0.7907079539973063) -- (0.19762739241962984,0.7521227441759761) -- (0.2239804122944468,0.7084292082294295) -- (0.24458080796106352,0.6715791945579095) -- (0.2668315389724596,0.6285999122149596) -- (0.2865409660827387,0.5872072367868147) -- (0.30398768114908015,0.5473720185079898) -- (0.32266180629877583,0.5005654237708179) -- (0.33824332979647315,0.45727977014456145) -- (0.35166220405356363,0.4159438578450644) -- (0.36588960135172655,0.3665318512363239) -- (0.3763467837659375,0.32506850801844467) -- (0.38629337194048874,0.2796259777848409) -- (0.3949783627832052,0.23245509598331238) -- (0.40000000143736614,0.20000000721313832) -- (0.40640628069953966,0.14839600544963943) -- (0.41000693557315715,0.10899745386068932) -- (0.4128710695480359,0.061606349954752906) -- (0.41421356354422356,-0.0000024297589152750923) -- (0.41353862876802294,-0.043686851158145495) -- (0.41108570124170235,-0.09400606051699104) -- (0.40359782178662285,-0.1729542040517701) -- (0.39516890717718806,-0.23130870013878516) -- (0.3864859279948552,-0.27866962790110256) -- (0.37502957091225364,-0.33059594028710176) -- (0.3611457999466277,-0.3837735094753413) -- (0.34595477193091206,-0.4340573159914135) -- (0.32334090452698205,-0.49876732959567305) -- (0.3078930238125526,-0.5379738264304494) -- (0.2897357237686963,-0.5801566746825506) -- (0.2697278008254521,-0.6227289182853352) -- (0.2466208579635466,-0.6677847221671896) -- (0.2271480416780325,-0.7029279309640825) -- (0.1973681192388362,-0.7525354392320721) -- (0.17141835895872692,-0.7923250755041773) -- (0.142904502779108,-0.8329281481966182) -- (0.11922282484862656,-0.864488441879371) -- (0.08286803178373536,-0.9096135567319031) -- (0.055979763999803894,-0.9406948189670412) -- (0.001514698231981981,-0.9984830080154887) -- (-0.032733437016226385,-0.9661581915239399) -- (-0.06670470877910015,-0.9285155198883904) -- (-0.09415910780083517,-0.8959999177447979) -- (-0.12225518664636503,-0.86054825494449) -- (-0.15015604715405007,-0.822885819433363) -- (-0.17266056085056256,-0.7904854302670945) -- (-0.19537065585508617,-0.7557043037494353) -- (-0.22159202978985088,-0.7125397619343328) -- (-0.24582111602162435,-0.6692755354023079) -- (-0.2647667006758452,-0.6327441834625082) -- (-0.28317577756386403,-0.5945249569072795) -- (-0.3022427375321743,-0.5515105217176488) -- (-0.3340914189950677,-0.46925481772616384) -- (-0.3510950813466851,-0.4177823387144838) -- (-0.36396177376037625,-0.3736419101539599) -- (-0.37874306850119727,-0.3147499814650379) -- (-0.388820615247759,-0.2667907417254215) -- (-0.3999999955862375,-0.2000000206702649) -- (-0.40633152985366505,-0.14910272776778255) -- (-0.41206018454040205,-0.0780130793003668) -- (-0.4142135646907452,-0.000002433580518049152) -- (-0.41237026182415065,0.07218199707105666) -- (-0.407756173087595,0.13499093211631502) -- (-0.3999999950633795,0.20000002224753766) -- (-0.3879438860858765,0.27131490469114994) -- (-0.36362053565286256,0.374885358969402) -- (-0.3413168794771855,0.44818414903357207) -- (-0.3123232528759561,0.5270746489649878) -- (-0.27684340879187963,0.6080056874885782) -- (-0.23429050421189612,0.6903093149280422) -- (-0.1896736860812534,0.7646414357905467) -- (-0.14146753164963788,0.834896323127536) -- (-0.07520933633064661,0.9186538443228096) -- cycle;
\draw[-triangle 45, line width=1.0pt,scale=1] (0,0) -- (1.4,0) node[below] {$x$};
\draw[line width=1.0pt,-triangle 45] (0,0) -- (-1.4,0);
\draw[-triangle 45, line width=1.0pt,scale=1] (0,0) -- (0,1.4) node[left] {$y$};
\draw[line width=1.0pt,-triangle 45] (0,0) -- (0.0,-1.4);
\draw[line width=1.2pt,domain=0:360,smooth,variable=\t,color=red] plot ({cos(\t)},{sin(\t)});
\draw[line width=1.0pt,color=blue,smooth] (0.00151545243597873,0.9984822498342265) -- (0.04130373618398374,0.9569151065907501) -- (0.06981865394145502,0.9249259698502412) -- (0.11497022645483136,0.8699663138170168) -- (0.14260652780950278,0.8333368566128772) -- (0.17251052290282334,0.7907079539973063) -- (0.19762739241962984,0.7521227441759761) -- (0.2239804122944468,0.7084292082294295) -- (0.24458080796106352,0.6715791945579095) -- (0.2668315389724596,0.6285999122149596) -- (0.2865409660827387,0.5872072367868147) -- (0.30398768114908015,0.5473720185079898) -- (0.32266180629877583,0.5005654237708179) -- (0.33824332979647315,0.45727977014456145) -- (0.35166220405356363,0.4159438578450644) -- (0.36588960135172655,0.3665318512363239) -- (0.3763467837659375,0.32506850801844467) -- (0.38629337194048874,0.2796259777848409) -- (0.3949783627832052,0.23245509598331238) -- (0.40000000143736614,0.20000000721313832) -- (0.40640628069953966,0.14839600544963943) -- (0.41000693557315715,0.10899745386068932) -- (0.4128710695480359,0.061606349954752906) -- (0.41421356354422356,-0.0000024297589152750923) -- (0.41353862876802294,-0.043686851158145495) -- (0.41108570124170235,-0.09400606051699104) -- (0.40359782178662285,-0.1729542040517701) -- (0.39516890717718806,-0.23130870013878516) -- (0.3864859279948552,-0.27866962790110256) -- (0.37502957091225364,-0.33059594028710176) -- (0.3611457999466277,-0.3837735094753413) -- (0.34595477193091206,-0.4340573159914135) -- (0.32334090452698205,-0.49876732959567305) -- (0.3078930238125526,-0.5379738264304494) -- (0.2897357237686963,-0.5801566746825506) -- (0.2697278008254521,-0.6227289182853352) -- (0.2466208579635466,-0.6677847221671896) -- (0.2271480416780325,-0.7029279309640825) -- (0.1973681192388362,-0.7525354392320721) -- (0.17141835895872692,-0.7923250755041773) -- (0.142904502779108,-0.8329281481966182) -- (0.11922282484862656,-0.864488441879371) -- (0.08286803178373536,-0.9096135567319031) -- (0.055979763999803894,-0.9406948189670412) -- (0.001514698231981981,-0.9984830080154887) -- (-0.032733437016226385,-0.9661581915239399) -- (-0.06670470877910015,-0.9285155198883904) -- (-0.09415910780083517,-0.8959999177447979) -- (-0.12225518664636503,-0.86054825494449) -- (-0.15015604715405007,-0.822885819433363) -- (-0.17266056085056256,-0.7904854302670945) -- (-0.19537065585508617,-0.7557043037494353) -- (-0.22159202978985088,-0.7125397619343328) -- (-0.24582111602162435,-0.6692755354023079) -- (-0.2647667006758452,-0.6327441834625082) -- (-0.28317577756386403,-0.5945249569072795) -- (-0.3022427375321743,-0.5515105217176488) -- (-0.3340914189950677,-0.46925481772616384) -- (-0.3510950813466851,-0.4177823387144838) -- (-0.36396177376037625,-0.3736419101539599) -- (-0.37874306850119727,-0.3147499814650379) -- (-0.388820615247759,-0.2667907417254215) -- (-0.3999999955862375,-0.2000000206702649) -- (-0.40633152985366505,-0.14910272776778255) -- (-0.41206018454040205,-0.0780130793003668) -- (-0.4142135646907452,-0.000002433580518049152) -- (-0.41237026182415065,0.07218199707105666) -- (-0.407756173087595,0.13499093211631502) -- (-0.3999999950633795,0.20000002224753766) -- (-0.3879438860858765,0.27131490469114994) -- (-0.36362053565286256,0.374885358969402) -- (-0.3413168794771855,0.44818414903357207) -- (-0.3123232528759561,0.5270746489649878) -- (-0.27684340879187963,0.6080056874885782) -- (-0.23429050421189612,0.6903093149280422) -- (-0.1896736860812534,0.7646414357905467) -- (-0.14146753164963788,0.834896323127536) -- (-0.07520933633064661,0.9186538443228096) -- cycle;
\draw[line width=1pt,color=magenta,smooth] (0.0,1) -- (0.0498,0.957) -- (0.0939,0.9288) -- (0.1402,0.907) -- (0.197,0.889) -- (0.2395,0.8797) -- (0.28968,0.873) -- (0.337,0.86899) -- (0.3898,0.86689) -- (0.45036,0.866) -- (0.5,0.86) -- (0.5,-0.86) -- (0.45036,-0.866) -- (0.3898,-0.86689) -- (0.337,-0.86899) -- (0.28968,-0.873) -- (0.2395,-0.8797) -- (0.197,-0.889) -- (0.1402,-0.907) -- (0.0939,-0.9288) -- (0.0498,-0.957) -- (0,-1)--(-0.0498,-0.957) -- (-0.0939,-0.9288) -- (-0.1402,-0.907) -- (-0.197,-0.889) -- (-0.2395,-0.8797) -- (-0.28968,-0.873) -- (-0.337,-0.86899) -- (-0.3898,-0.86689) -- (-0.45036,-0.866) -- (-0.5,-0.86) -- (-0.5,0.86) -- (-0.45036,0.866) -- (-0.3898,0.86689) -- (-0.337,0.86899) -- (-0.28968,0.873) -- (-0.2395,0.8797) -- (-0.197,0.889) -- (-0.1402,0.907) -- (-0.0939,0.9288) -- (-0.0498,0.957) -- (0,1);
\draw[line width=1.2pt,color=MyGreen,smooth] (0,1)-- (0.01966,0.982)-- (0.0347,0.97075)-- (0.0508,0.9604)-- (0.0685,0.951)-- (0.087,0.943)-- (0.105,0.9374)-- (0.124,0.933)-- (0.151,0.929)-- (0.171,0.927)-- (0.194,0.928)-- (0.212,0.929)-- (0.229,0.93)-- (0.25,0.935)-- (0.265,0.94)-- (0.287,0.944)-- (0.306,0.95)--(0.315,0.933)-- (0.31,0.95)-- (0.322,0.91)-- (0.331,0.89)-- (0.34,0.87)-- (0.35,0.85)-- (0.359,0.8344)-- (0.3704,0.81447)-- (0.383,0.795)-- (0.396,0.778)-- (0.408,0.763)-- (0.422,0.75)-- (0.436,0.735)-- (0.45,0.724)-- (0.464,0.713)-- (0.48,0.703)-- (0.49786,0.695)-- (0.5177,0.6878)-- (0.54,0.683)-- (0.557,0.68)-- (0.579,0.6785)-- (0.60,0.679)-- (0.6213,0.6818)-- (0.644,0.686)-- (0.672,0.694)-- (0.6987,0.7037)--(0.6987,0.7037) -- (0.713,0.694) -- (0.725,0.673) -- (0.737,0.6556) -- (0.748,0.642) -- (0.761,0.62867) -- (0.774,0.616) -- (0.797,0.5965) -- (0.812,0.586) -- (0.794,0.566) -- (0.778,0.544) -- (0.76678,0.5265) -- (0.757,0.512) -- (0.74794,0.497) -- (0.731,0.468) -- (0.722,0.45) -- (0.71,0.432) -- (0.696,0.402) -- (0.675,0.354) -- (0.66,0.321) -- (0.655,0.30) -- (0.6457,0.2724) -- (0.6404,0.254) -- (0.633,0.228) -- (0.626898,0.1999) -- (0.6233,0.18) -- (0.6175,0.1498) -- (0.613,0.119) -- (0.611,0.0988) -- (0.6087,0.075) -- (0.607,0.049) -- (0.606,0.0223) -- (0.6058,0)--(0.606,-0.0223)--(0.607,-0.049)-- (0.6087,-0.075) -- (0.611,-0.0988) -- (0.613,-0.119) -- (0.6175,-0.1498) -- (0.6233,-0.18) -- (0.626898,-0.1999) -- (0.633,-0.228) -- (0.6404,-0.254) -- (0.6457,-0.2724)-- (0.655,-0.30) -- (0.66,-0.321)-- (0.675,-0.354)-- (0.696,-0.402) -- (0.71,-0.432) -- (0.722,-0.45)-- (0.731,-0.468) -- (0.74794,-0.497)-- (0.757,-0.512) -- (0.76678,-0.5265) -- (0.778,-0.544)-- (0.794,-0.566) -- (0.812,-0.586) -- (0.797,-0.5965) -- (0.774,-0.616)-- (0.761,-0.62867)-- (0.748,-0.642)-- (0.737,-0.6556)--(0.725,-0.673) -- (0.713,-0.694)--(0.6987,-0.7037)-- (0.6987,-0.7037)-- (0.672,-0.694)-- (0.644,-0.686)--(0.6213,-0.6818)-- (0.60,-0.679)-- (0.579,-0.6785)-- (0.557,-0.68)-- (0.54,-0.683)-- (0.5177,-0.6878)--(0.49786,-0.695)-- (0.48,-0.703)-- (0.464,-0.713)-- (0.45,-0.724)-- (0.436,-0.735)-- (0.422,-0.75)--(0.408,-0.763)-- (0.396,-0.778)-- (0.383,-0.795)-- (0.3704,-0.81447)-- (0.359,-0.8344)-- (0.35,-0.85)--(0.34,-0.87)-- (0.331,-0.89)-- (0.322,-0.91)-- (0.31,-0.95)--(0.315,-0.933)-- (0.306,-0.95)--(0.287,-0.944)-- (0.265,-0.94)-- (0.25,-0.935)-- (0.229,-0.93)-- (0.212,-0.93)-- (0.194,-0.928)--(0.171,-0.927)-- (0.151,-0.929)-- (0.124,-0.933)-- (0.105,-0.9374)-- (0.087,-0.943)-- (0.0685,-0.951)-- (0.0508,-0.9604)-- (0.0347,-0.97075)-- (0.01966,-0.982)--(0,-1)--
(-0.01966,-0.982)-- (-0.0347,-0.97075)-- (-0.0508,-0.9604)-- (-0.0685,-0.951)-- (-0.087,-0.943)-- (-0.105,-0.9374)-- (-0.124,-0.933)-- (-0.151,-0.929)-- (-0.171,-0.927)-- (-0.194,-0.928)-- (-0.212,-0.929)-- (-0.229,-0.93)-- (-0.25,-0.935)-- (-0.265,-0.94)-- (-0.287,-0.944)-- (-0.306,-0.95)--(-0.315,-0.933)-- (-0.31,-0.95)-- (-0.322,-0.91)-- (-0.331,-0.89)-- (-0.34,-0.87)-- (-0.35,-0.85)-- (-0.359,-0.8344)-- (-0.3704,-0.81447)-- (-0.383,-0.795)-- (-0.396,-0.778)-- (-0.408,-0.763)-- (-0.422,-0.75)-- (-0.436,-0.735)-- (-0.45,-0.724)-- (-0.464,-0.713)-- (-0.48,-0.703)-- (-0.49786,-0.695)-- (-0.5177,-0.6878)-- (-0.54,-0.683)-- (-0.557,-0.68)-- (-0.579,-0.6785)-- (-0.60,-0.679)-- (-0.6213,-0.6818)-- (-0.644,-0.686)-- (-0.672,-0.694)-- (-0.6987,-0.7037)--(-0.6987,-0.7037) -- (-0.713,-0.694) -- (-0.725,-0.673) -- (-0.737,-0.6556) -- (-0.748,-0.642) -- (-0.761,-0.62867) -- (-0.774,-0.616) -- (-0.797,-0.5965) -- (-0.812,-0.586) -- (-0.794,-0.566) -- (-0.778,-0.544) -- (-0.76678,-0.5265) -- (-0.757,-0.512) -- (-0.74794,-0.497) -- (-0.731,-0.468) -- (-0.722,-0.45) -- (-0.71,-0.432) -- (-0.696,-0.402) -- (-0.675,-0.354) -- (-0.66,-0.321) -- (-0.655,-0.30) -- (-0.6457,-0.2724) -- (-0.6404,-0.254) -- (-0.633,-0.228) -- (-0.626898,-0.1999) -- (-0.6233,-0.18) -- (-0.6175,-0.1498) -- (-0.613,-0.119) -- (-0.611,-0.0988) -- (-0.6087,-0.075) -- (-0.607,-0.049) -- (-0.606,-0.0223) -- (-0.6058,0)--(-0.606,0.0223)--(-0.607,0.049)-- (-0.6087,0.075) -- (-0.611,0.0988) -- (-0.613,0.119) -- (-0.6175,0.1498) -- (-0.6233,0.18) -- (-0.626898,0.1999) -- (-0.633,0.228) -- (-0.6404,0.254) -- (-0.6457,0.2724)-- (-0.655,0.30) -- (-0.66,0.321)-- (-0.675,0.354)-- (-0.696,0.402) -- (-0.71,0.432) -- (-0.722,0.45)-- (-0.731,0.468) -- (-0.74794,0.497)-- (-0.757,0.512) -- (-0.76678,0.5265) -- (-0.778,0.544)-- (-0.794,0.566) -- (-0.812,0.586) -- (-0.797,0.5965) -- (-0.774,0.616)-- (-0.761,0.62867)-- (-0.748,0.642)-- (-0.737,0.6556)--(-0.725,0.673) -- (-0.713,0.694)--(-0.6987,0.7037)-- (-0.6987,0.7037)-- (-0.672,0.694)-- (-0.644,0.686)--(-0.6213,0.6818)-- (-0.60,0.679)-- (-0.579,0.6785)-- (-0.557,0.68)-- (-0.54,0.683)-- (-0.5177,0.6878)--(-0.49786,0.695)-- (-0.48,0.703)-- (-0.464,0.713)-- (-0.45,0.724)-- (-0.436,0.735)-- (-0.422,0.75)--(-0.408,0.763)-- (-0.396,0.778)-- (-0.383,0.795)-- (-0.3704,0.81447)-- (-0.359,0.8344)-- (-0.35,0.85)--(-0.34,0.87)-- (-0.331,0.89)-- (-0.322,0.91)-- (-0.31,0.95)--(-0.315,0.933)-- (-0.306,0.95)--(-0.287,0.944)-- (-0.265,0.94)-- (-0.25,0.935)-- (-0.229,0.93)-- (-0.212,0.93)-- (-0.194,0.928)--(-0.171,0.927)-- (-0.151,0.929)-- (-0.124,0.933)-- (-0.105,0.9374)-- (-0.087,0.943)-- (-0.0685,0.951)-- (-0.0508,0.9604)-- (-0.0347,0.97075)-- (-0.01966,0.982)--(0,1)--cycle;
\draw[line width=1.2pt,color=white,smooth] (-1,0)--(1,0);
\draw[scale=1] (1.05,0) node[below] {\footnotesize $1 $};
\draw[scale=1] (0.05,1.1) node[left] {\footnotesize $1 $};
\draw[scale=1] (-1.1,0) node[below] {\footnotesize $-1 $};
\draw[scale=1] (0.05,-1.1) node[left] {\footnotesize $-1 $};
\draw[scale=1] (0.0,-1.4) node[below,rotate=0] {\footnotesize (i) Non-real eigenvalues};
\end{tikzpicture}
\end{center}
\end{minipage}%
\begin{minipage}[t]{0.5\textwidth}
\begin{center}
\begin{tikzpicture}[line cap=round,line join=round,>=triangle 45,x=1.0cm,y=1.0cm,scale=2.5]
\draw[help lines,step=.25] (-1.0,-1.0) grid (1.0,1.0);
\fill[line width=1pt,color=ffqqqq,fill=olive!30,fill opacity=1.0] (-1,0) -- (-1,1) -- (0,1) -- (0.0409,0.973) -- (0.0785,0.9474) -- (0.117,0.922) -- (0.159,0.89) -- (0.198,0.865) -- (0.2377,0.836) -- (0.271,0.811) -- (0.323,0.771) -- (0.374,0.73) -- (0.416,0.695) -- (0.451,0.664) -- (0.48,0.6364) -- (0.5178,0.602) -- (0.553,0.568) -- (0.585,0.535) -- (0.6264,0.492) -- (0.657,0.459) -- (0.686,0.4253) -- (0.714,0.393) -- (0.745,0.355) -- (0.78,0.311) -- (0.8188,0.261) -- (0.8535,0.213) -- (0.887,0.166) -- (0.9195,0.1196) -- (0.944,0.083) -- (0.9698,0.045) -- (1,0.0) -- (1,-1) -- (-0.00,-0.99999) -- (-0.0634,-0.9576) -- (-0.11419,-0.923) -- (-0.15327,-0.896) -- (-0.2075,-0.858) -- (-0.26476,-0.816) -- (-0.307,-0.783) -- (-0.3527,-0.747) -- (-0.3945,-0.713) -- (-0.441,-0.673) -- (-0.487,-0.63) -- (-0.54,-0.58) -- (-0.587,-0.534) -- (-0.616,-0.503) -- (-0.653,-0.463) -- (-0.687,-0.424) -- (-0.715,-0.391) -- (-0.7463,-0.354) -- (-0.778,-0.314) -- (-0.814,-0.267) -- (-0.845,-0.225) -- (-0.879,-0.178) -- (-0.909,-0.135) -- (-0.937,-0.094) -- (-1,0) -- cycle;
\fill[fill=gray!20] (0,1) -- (0.0358,0.931) -- (0.0627,0.88) -- (0.0896,0.8355) -- (0.12455,0.778) -- (0.1514,0.737) -- (0.181,0.693) -- (0.221,0.6375) -- (0.262,0.585) -- (0.2912,0.5489) -- (0.326,0.508) -- (0.3557,0.475) -- (0.404,0.424) -- (0.439,0.389788) -- (0.4767,0.354) -- (0.514,0.32) -- (0.549,0.2909) -- (0.584,0.26244) -- (0.627,0.229) -- (0.662,0.203) -- (0.7025,0.1747) -- (0.748,0.144) -- (0.79659,0.113) -- (0.8369,0.08878) -- (0.893,0.056) -- (0.934,0.034) -- (1,0) -- (1,-1) -- (0,-1) -- (-0.02867,-0.944) -- (-0.058,-0.8899) -- (-0.1066,-0.807) -- (-0.139,-0.756) -- (-0.168,-0.712) -- (-0.2007,-0.66567) -- (-0.233,-0.622) -- (-0.2625,-0.584) -- (-0.292,-0.54785) -- (-0.3378,-0.495) -- (-0.373,-0.4569) -- (-0.429,-0.399) -- (-0.464,-0.3659) -- (-0.502,-0.33) -- (-0.55,-0.29) -- (-0.585,-0.2617) -- (-0.623,-0.2325) -- (-0.66,-0.2045) -- (-0.695,-0.163) -- (-0.738,-0.1505) -- (-0.78136,-0.1227) -- (-0.824,-0.096) -- (-0.87,-0.069) -- (-0.918,-0.0425) -- (-0.95878,-0.021) -- (-1,0) -- (-1,1) -- cycle;
\fill[line width=1pt,color=ffqqqq,fill=cyan!50,fill opacity=1.0] (0,-1) -- (-0.02867,-0.944) -- (-0.058,-0.8899) -- (-0.1066,-0.807) -- (-0.139,-0.756) -- (-0.168,-0.712) -- (-0.2007,-0.66567) -- (-0.233,-0.622) -- (-0.2625,-0.584) -- (-0.292,-0.54785) -- (-0.3378,-0.495) -- (-0.373,-0.4569) -- (-0.429,-0.399) -- (-0.464,-0.3659) -- (-0.502,-0.33) -- (-0.55,-0.29) -- (-0.585,-0.2617) -- (-0.623,-0.2325) -- (-0.66,-0.2045) -- (-0.695,-0.163) -- (-0.738,-0.1505) -- (-0.78136,-0.1227) -- (-0.824,-0.096) -- (-0.87,-0.069) -- (-0.918,-0.0425) -- (-0.95878,-0.021) -- (-1,0) -- (0,-1)--cycle;
\fill[line width=1pt,color=ffqqqq,fill=cyan!50,fill opacity=1.0] (0,1) -- (0.0358,0.931) -- (0.0627,0.88) -- (0.0896,0.8355) -- (0.12455,0.778) -- (0.1514,0.737) -- (0.181,0.693) -- (0.221,0.6375) -- (0.262,0.585) -- (0.2912,0.5489) -- (0.326,0.508) -- (0.3557,0.475) -- (0.404,0.424) -- (0.439,0.389788) -- (0.4767,0.354) -- (0.514,0.32) -- (0.549,0.2909) -- (0.584,0.26244) -- (0.627,0.229) -- (0.662,0.203) -- (0.7025,0.1747) -- (0.748,0.144) -- (0.79659,0.113) -- (0.8369,0.08878) -- (0.893,0.056) -- (0.934,0.034) -- (1,0)--(0,1)--cycle;
\draw[-triangle 45, line width=1.0pt,scale=1] (0,0) -- (1.4,0) node[below] {$\lambda_1$};
\draw[line width=1.0pt,-triangle 45] (0,0) -- (-1.4,0);
\draw[-triangle 45, line width=1.0pt,scale=1] (0,0) -- (0,1.4) node[left] {$\lambda_2$};
\draw[line width=1.0pt,-triangle 45] (0,0) -- (0.0,-1.4);
\draw[line width=1.0pt,color=blue,smooth] (0,1) -- (0.0358,0.931) -- (0.0627,0.88) -- (0.0896,0.8355) -- (0.12455,0.778) -- (0.1514,0.737) -- (0.181,0.693) -- (0.221,0.6375) -- (0.262,0.585) -- (0.2912,0.5489) -- (0.326,0.508) -- (0.3557,0.475) -- (0.404,0.424) -- (0.439,0.389788) -- (0.4767,0.354) -- (0.514,0.32) -- (0.549,0.2909) -- (0.584,0.26244) -- (0.627,0.229) -- (0.662,0.203) -- (0.7025,0.1747) -- (0.748,0.144) -- (0.79659,0.113) -- (0.8369,0.08878) -- (0.893,0.056) -- (0.934,0.034) -- (1,0);
\draw[line width=1.0pt,color=blue,smooth] (0,-1) -- (-0.02867,-0.944) -- (-0.058,-0.8899) -- (-0.1066,-0.807) -- (-0.139,-0.756) -- (-0.168,-0.712) -- (-0.2007,-0.66567) -- (-0.233,-0.622) -- (-0.2625,-0.584) -- (-0.292,-0.54785) -- (-0.3378,-0.495) -- (-0.373,-0.4569) -- (-0.429,-0.399) -- (-0.464,-0.3659) -- (-0.502,-0.33) -- (-0.55,-0.29) -- (-0.585,-0.2617) -- (-0.623,-0.2325) -- (-0.66,-0.2045) -- (-0.695,-0.178) -- (-0.738,-0.1505) -- (-0.78136,-0.1227) -- (-0.824,-0.096) -- (-0.87,-0.069) -- (-0.918,-0.0425) -- (-0.95878,-0.021) -- (-1,0);
\draw[line width=1.0pt,color=MyGreen,smooth] (-1,0) -- (-1,1) -- (0,1) -- (0.0409,0.973) -- (0.0785,0.9474) -- (0.117,0.922) -- (0.159,0.89) -- (0.198,0.865) -- (0.2377,0.836) -- (0.271,0.811) -- (0.323,0.771) -- (0.374,0.73) -- (0.416,0.695) -- (0.451,0.664) -- (0.48,0.6364) -- (0.5178,0.602) -- (0.553,0.568) -- (0.585,0.535) -- (0.6264,0.492) -- (0.657,0.459) -- (0.686,0.4253) -- (0.714,0.393) -- (0.745,0.355) -- (0.78,0.311) -- (0.8188,0.261) -- (0.8535,0.213) -- (0.887,0.166) -- (0.9195,0.1196) -- (0.944,0.083) -- (0.9698,0.045) -- (1,0.0) -- (1,-1) -- (-0.00,-0.99999) -- (-0.0634,-0.9576) -- (-0.11419,-0.923) -- (-0.15327,-0.896) -- (-0.2075,-0.858) -- (-0.26476,-0.816) -- (-0.307,-0.783) -- (-0.3527,-0.747) -- (-0.3945,-0.713) -- (-0.441,-0.673) -- (-0.487,-0.63) -- (-0.54,-0.58) -- (-0.587,-0.534) -- (-0.616,-0.503) -- (-0.653,-0.463) -- (-0.687,-0.424) -- (-0.715,-0.391) -- (-0.7463,-0.354) -- (-0.778,-0.314) -- (-0.814,-0.267) -- (-0.845,-0.225) -- (-0.879,-0.178) -- (-0.909,-0.135) -- (-0.937,-0.094) -- (-1,0) -- cycle;
\draw[line width=1.1pt,color=red] (-1,-1)--(-1,1)--(1,1)--(1,-1)--(-1,-1);
\draw[line width=1pt,color=magenta] (0,1)--(1,0);
\draw[line width=1pt,color=magenta] (0,-1)--(-1,0);
\draw[scale=1] (1.05,0) node[below] {\footnotesize $1 $};
\draw[scale=1] (0.05,1.1) node[left] {\footnotesize $1 $};
\draw[scale=1] (-1.1,0) node[below] {\footnotesize $-1 $};
\draw[scale=1] (0.05,-1.05) node[left] {\footnotesize $-1 $};
\draw[scale=1] (0.0,-1.4) node[below,rotate=0] {\footnotesize (ii) Real eigenvalues};
\end{tikzpicture}
\end{center}
\end{minipage}%
\caption{This figure illustrates the eigenvalues of a $2\times 2$ matrix as they appear within the unit disk $\mathbb{D}$, and as captured by our expansion technique. The colored regions in Part (i) of this diagram illustrate how non-real eigenvalues are accounted for in our expansion method. The inner gray-colored region belongs to $\|V_0\|_1<1$. The cyan-shaded region corresponds to $\|V_1\|_1<1$. In contrast, the area within the green curves reflects the non-real eigenvalues captured by $\|V_2\|_1<1.$    Part (ii) of the figure is similar but represents the case of real eigenvalues.}\label{Fig-Eigenvalues}
\end{figure}

To have the zeros of a polynomial $p$ inside $\mathbb{D},$ it is necessary to have $|p(0)|<1,$ $p(1)>0$ and $(-1)^np(-1)>0.$ It was clarified in \cite{Al-Ca-Ka2025} that $\|p\|_{\ell_1}<2$ implicitly achieves the three necessary conditions. Here, we can say more. Even if $\|p\|_{\ell_1}\geq 2,$ but $\|p_m\|_{\ell_1}<2$ for some $m,$ then the three necessary conditions are also satisfied as a consequence of Theorem \ref{Th-MainTheorem}. Having clarified this, it is advisable to check the three necessary conditions from the get-go, since it could be labor-saving when some roots are out of $\mathbb{D}.$

\subsection{The equilibria of the expanded systems}

It is obvious that $\bar x=0$ is a fixed point of the linear map $F_m$ for all $m$; however, if $F_m$ has a fixed point, what does it mean for $F_{0}$? Before we address this question, observe that we obtain infinitely many fixed points of $F_0$ when
$$F_0(1,\ldots,1)=\sum_{j=0}^{k-1}a_j=1.$$
Also, $F_0(1,\ldots,1) $ is bounded by $-\|V_0\|_1$ and $ \|V_0\|_1.$
Similarly, a fixed point of $F_m$ means either $\bar x=0,$ or infinitely many fixed points when $F_m(1,\ldots,1)=1.$ This condition is a boundary component of the sufficient condition that we identified during our discussion of stability.
\begin{theorem}\label{Th-FixedPointsLinear}
Consider $F_0$ and $F_m$ as given in Eqs. \eqref{Eq-Linear1} and \eqref{Eq-Linear3}. Let $\alpha_k:=\sum_{j=0}^{k-1}a_j$ and $\beta_m=\sum_{j=0}^mb_{j,0}.$ Each of the following holds true:
\begin{description}
\item{(i)} If $\alpha_k=1,$ then for all $\bar x\in\mathbb{R},$ $\bar{x}$ is a fixed point of $F_m$ for all $m\in\mathbb{N}.$
\item{(ii)}  If $\alpha_k\neq 1,$ but $\beta_m=-1$ for some $m,$  then for all $\bar x\in\mathbb{R},$ $\bar{x}$ is a fixed point of $F_{m+1}.$
\end{description}
\end{theorem}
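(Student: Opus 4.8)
The plan is to reduce the whole statement to a single scalar identity. Since each $F_m$ is linear and homogeneous, $F_m(\bar{x},\ldots,\bar{x}) = \bar{x}\sum_{j=0}^{k-1} b_{m,j}$, so every $\bar{x}\in\mathbb{R}$ is a fixed point of $F_m$ precisely when the coefficient sum equals one. Accordingly, I would introduce the abbreviation $S_m := \sum_{j=0}^{k-1} b_{m,j} = F_m(1,\ldots,1)$, noting that $S_0 = \alpha_k$ since $b_{0,j}=a_j$. Both parts of the theorem then amount to showing $S_m = 1$ for the relevant index.

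The key step is to track how $S_m$ evolves under the expansion. Starting from the coefficient recursion \eqref{Eq-The-b-System}, I would sum over $j$ from $0$ to $k-1$, handling the exceptional top index $j=k-1$ (where $b_{m+1,k-1}=b_{0,k-1}b_{m,0}$) separately from the generic range $0\le j\le k-2$ (where $b_{m+1,j}=b_{0,j}b_{m,0}+b_{m,j+1}$). After reindexing the shifted block as $\sum_{j=0}^{k-2} b_{m,j+1} = S_m - b_{m,0}$ and collecting the $b_{m,0}$ terms, I expect to arrive at
\[
S_{m+1} = b_{m,0}\sum_{j=0}^{k-1} b_{0,j} + (S_m - b_{m,0}) = S_m + b_{m,0}(\alpha_k - 1).
\]
Telescoping this first-order recurrence and recognizing $\sum_{i=0}^m b_{i,0} = \beta_m$ yields $S_{m+1} = \alpha_k + (\alpha_k-1)\beta_m$, hence the factorization $S_{m+1}-1 = (\alpha_k-1)(1+\beta_m)$.

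With this identity in hand both conclusions are immediate. For part (i), $\alpha_k = 1$ makes the increment $b_{m,0}(\alpha_k-1)$ vanish, so $S_m = S_0 = 1$ for every $m$ (equivalently, every $\bar{x}$ is already a fixed point of $F_0$, and Proposition \ref{Pr-ChangeInFixedPoints} propagates this to all $F_m$). For part (ii), when $\alpha_k \neq 1$ the factor $(\alpha_k-1)$ is nonzero, so $\beta_m = -1$ forces $S_{m+1} = 1$, i.e.\ $F_{m+1}(1,\ldots,1)=1$, which is exactly the assertion that every $\bar{x}\in\mathbb{R}$ is a fixed point of $F_{m+1}$.

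The only genuinely delicate point is the bookkeeping in deriving the recurrence for $S_{m+1}$: the split index $j=k-1$ in \eqref{Eq-The-b-System} must be merged back so that the coefficient of $b_{m,0}$ becomes the \emph{full} sum $\sum_{j=0}^{k-1} b_{0,j}=\alpha_k$ rather than a partial sum, and the reindexing of the shifted block must account for the missing $b_{m,0}$ term. Everything after that is a one-line telescoping argument together with the factorization $(\alpha_k-1)(1+\beta_m)$, so I anticipate no further obstacles.
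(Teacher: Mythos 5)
Your proposal is correct and follows essentially the same route as the paper: both derive the recurrence $F_{m+1}(1,\ldots,1)=F_m(1,\ldots,1)+(\alpha_k-1)b_{m,0}$ (you from summing the coefficient system \eqref{Eq-The-b-System}, the paper from the equivalent matrix form $V_{m+1}=J_0^tV_m$) and then telescope to the identity $F_{m+1}(1,\ldots,1)-1=(\alpha_k-1)(1+\beta_m)$, from which both parts follow immediately.
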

\begin{proof}
 From Eq. \eqref{Eq-Vm}, we have $V_{m+1}=J_0^tV_m,$ and consequently,
 \begin{align*}
 \sum_{j=0}^{k-1}b_{m+1,j}=&\left(\sum_{j=0}^{k-1}a_{j}-1\right)b_{m,0}+\sum_{j=0}^{k-1}b_{m,j}\\
 F_{m+1}(1,1,\ldots,1)=&(\alpha_k-1)b_{m,0}+F_m(1,1,\ldots,1).
 \end{align*}
 By induction on $m$, we obtain
$$F_{m+1}(1,1,\ldots,1)=(\alpha_k-1)(1+\beta_m)+1. $$
Therefore, if $\alpha_k=1,$ we obtain Part (i), and if $\beta_m=-1$ for some $m\in \mathbb{N},$ then we obtain Part (ii).
\end{proof}
It is worth mentioning that, unlike in the case of $F_0,$ if $F_m,\; m\geq 1$ has infinitely many fixed points, it does not mean that $F_{m+1}$ has infinitely many fixed points. We illustrate this issue in the following example.
\begin{example}
Consider $k=2$ in Eq. \eqref{Eq-Linear1}, i.e,
$$F_0(x_n,x_{n-1},x_{n-2})=a_0x_n+a_1x_{n-1}+a_2x_{n-2},$$
and let $a_1:=-(a_0^2+a_0+1).$ Based on the notations of Theorem \ref{Th-FixedPointsLinear}, we have
$$\beta_0=a_0,\;\beta_1=-1,\;\beta_2=a_2-(a_0+1)(a_0^2 +a_0+1)\quad \text{and}\quad \alpha_2=a_2-(a_0^2+1).$$
So, if $a_2\neq a_0^2 +2,$ then $\alpha_2\neq 1$ and $F_0$ has only the trivial fixed point. On the other hand, since $\beta_1=-1,$ $F_2$ has infinitely many fixed points, while $F_3$ has only the trivial fixed point as long as $a_2\neq (a_0+1)(a_0^2 +a_0+1)-1.$
\end{example}

\section{Expansion and stability in nonlinear systems}\label{Sec-Nonlinear}
Expansion in the linear case is found to be fruitful in understanding the stability of the origin. This section focuses on Eq. \eqref{Eq-TheGeneralExpansion} when $F_0$ is nonlinear, and we aim to establish local and global stability results. We assume $F_0$ has a fixed point $\bar x,$ i.e., $F_0(x,\ldots,x)=x$ has a solution $x=\bar x.$ Also, we stress that $F_0\in \mathcal{C}^1(\mathbb{R}^k)$. For our writing convenience, we define $X_n=(x_n,\ldots,x_{n-k+1})$ and $Y_n=(x_n,\ldots,x_{n-k+2}).$ The one step ($m=1$) expansion becomes
\begin{equation}\label{Eq-NonLinear2}
x_{n+1}=F_1(X_{n-1})=F_0(F_0(X_{n-1}),Y_{n-1}).
\end{equation}
By repeating the process further (say $m$-times), the expanded system becomes
\begin{equation}\label{Eq-NonLinearm}
x_{n+1}=F_m(X_{n-m})=F_{m-1}(F_0(X_{n-m}),Y_{n-m}).
\end{equation}

Since a fixed point of $F_0$ stays a fixed point of $F_m$ for all $m,$ we investigate its local stability under $F_m$. At the equilibrium $\bar x,$ define $a_{i-1}=\frac{\partial F_0}{\partial x_i}$ for all $i=1,\ldots,k,$ and let $J_0$ denote the Jacobian matrix of the system. Based on this notation, $J_0$ becomes the same as $J_0$ in Section \ref{Sec-Linear}. Furthermore, implicit differentiation at an equilibrium $\bar x$ shows that the Jacobian matrix of $F_m$ is, in fact, the matrix $J_m$ obtained for the linear system in Eqs. \eqref{Eq-Linear3} and \eqref{Eq-The-b-System}. This fact and Theorem \ref{Th-MainTheorem} lead to the following local stability result:

\begin{theorem}\label{Co-Linearization}
Consider the expanded system in Eq. \eqref{Eq-NonLinearm}. The equilibrium solution $\bar x$ is locally asymptotically stable for $F_0$ if and only if $\|\nabla F_m\|_1$ at $\bar x_0$ is less than one for some  $m\in \mathbb{N}.$
\end{theorem}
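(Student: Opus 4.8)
The plan is to reduce this nonlinear statement to the purely linear Theorem \ref{Th-MainTheorem}. The crux, already flagged in the discussion preceding the theorem, is that the Jacobian matrix of the nonlinear map $F_m$ evaluated at the equilibrium $\bar x$ coincides with the matrix $J_m$ from Section \ref{Sec-Linear}. I would establish this first by differentiating the recursion $F_m(X_{n-m})=F_{m-1}(F_0(X_{n-m}),Y_{n-m})$ implicitly at the fixed point, where every coordinate equals $\bar x$. Writing $a_{i-1}=\partial F_0/\partial x_i$ evaluated at $\bar x$, the chain rule collapses---because all arguments coincide---into exactly the linear update law \eqref{Eq-The-b-System} for the first-row entries $b_{m,j}$, and the companion structure below is unchanged. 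Hence $\nabla F_m=V_m^t$, so $\|\nabla F_m\|_1=\|V_m\|_1$; and from the definition of $p_m$ in \eqref{Eq-Def-Pm} one reads off $\|p_m\|_{\ell_1}=1+\|V_m\|_1$. Thus the hypothesis $\|\nabla F_m\|_1<1$ is literally equivalent to $\|p_m\|_{\ell_1}<2$. Finally, since $\bar x$ is hyperbolic, LAS for $F_0$ means precisely $Spec(J_0)\subset\mathbb{D}$, which is the definition adopted in this paper.

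With this dictionary in hand, both implications follow quickly. For ($\Leftarrow$), suppose $\|\nabla F_m\|_1<1$ for some $m$; rewriting this as $\|p_m\|_{\ell_1}<2$ and invoking Part (i) of Theorem \ref{Th-Stability1} (equivalently the easy direction of Theorem \ref{Th-MainTheorem}) shows that all zeros of $p_m$ lie in $\mathbb{D}$. Because $p_m=p_0q_m$ by Lemma \ref{Lem-Pm}, the zeros of $p_0$---that is, $Spec(J_0)$---are among them and hence lie in $\mathbb{D}$, giving LAS.

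For ($\Rightarrow$), I would start from $Spec(J_0)\subset\mathbb{D}$ and use Lemma \ref{Lem-Crucial}, which gives $V_m=(J_0^t)^mV_0$. Since $Spec(J_0^t)=Spec(J_0)$ lies strictly inside the unit disk, the spectral radius of $J_0^t$ is below one, so $(J_0^t)^m\to 0$ and therefore $\|\nabla F_m\|_1=\|V_m\|_1\to 0$ as $m\to\infty$. In particular some finite $m$ satisfies $\|\nabla F_m\|_1<1$, completing the equivalence.

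The step I expect to require the most care is the very first one: confirming that implicit differentiation of the nested substitution reproduces the linear recursion \eqref{Eq-The-b-System} exactly, including the boundary entry $b_{m+1,k-1}=b_{m,0}b_{0,k-1}$. The delicate point is that at $\bar x$ all arguments of $F_0$ degenerate to the same scalar, so the partials of $F_0$ freeze to the constants $a_j$ and the nonlinear chain rule reduces to the linear coefficient update; once this identification is secured, the rest is a direct transcription of the linear theory of Section \ref{Sec-Linear}.
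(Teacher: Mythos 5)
Your proposal is correct and follows essentially the same route as the paper: linearize at $\bar x$ so that $\nabla F_m$ at the fixed point equals $V_m^t=(J_0^t)^mV_0$ from Lemma \ref{Lem-Crucial}, then invoke Theorem \ref{Th-MainTheorem} via the identity $\|p_m\|_{\ell_1}=1+\|V_m\|_1$. The only difference is that you unpack the two directions of Theorem \ref{Th-MainTheorem} explicitly and spell out the chain-rule verification that the paper states in the paragraph preceding the theorem; the substance is identical.
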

\begin{proof}
Local stability of $F_0$ is obtained through linearization, and consequently, Theorem \ref{Th-MainTheorem} is applicable here.  Let $J_0$  be the Jacobian matrix of $F_0$ at the equilibrium $\bar x$ of $F_0,$ and $a_{i-1}=\frac{\partial F_0}{\partial x_i}$ for all $i=1,\ldots,k.$ The zero equilibrium of Eq. \eqref{Eq-Linear1} is locally stable if and only if there exists $m\in \mathbb{N}$ such that $\|(J_0^t)^mV_0\|_1<1,$ where $V_0$ is the column vector $[a_0,a_1,\ldots,a_{k-1}]^t.$ Now, the proof completes by observing that $\|(J_0^t)^mV_0\|_1=\|\nabla F_m\|_1.$ 
\end{proof}
Next, we discuss the expansion strategy's positive impact on global stability. We integrate the expansion strategy with the embedding technique, which necessitates a monotonicity assumption on $F_0$ or one of its expansions, $F_m$. Then, we depend on the recent results obtained by the authors in \cite{Al-Ca-Ka2024B}.  We consider a partial order on $\mathbb{R}^k$ that depends on the nature of the monotonicity in $F_0.$ $(x_1,\ldots,x_k)\leq_\tau (y_1,\ldots,y_k)$ means $x_i\leq y_i$ ($y_i<x_i$) whenever $F_0$ is increasing (decreasing) in its $i^{th}$-argument. Here, increasing is used to mean non-decreasing. Similarly, decreasing is used to mean non-increasing. $F_0$ becomes increasing under this partial order in the sense that $X\leq_\tau Y$ implies $F_0(X)\leq F_0(Y).$ Define the point $P_\tau=(u_1,\ldots,u_k)$ so that $u_i=x$ if $F_0$ is increasing in its $i^{th}$ argument, and $u_i=y$ otherwise. Also, consider $P_\tau^t$ to be the ``dual" of $P_\tau,$ where $x$ and $y$ are switched ($x\leftrightarrow y$). When $x\leq y,$ the points $P_\tau$ and $P_\tau^t$ represent the minimum and maximum (respectively) of a boxed region in $ k$-dimensional space equipped with the $\tau$-partial order.  Observe that  $(x,y)=(F_0(P_\tau), F_0(P^t_\tau))$ has at least one solution given by the fixed point  $x=y=\bar x.$ However, solutions in which $x\neq y$ are dubbed in \cite{Al-Ca-Ka2024B} as pseudo-fixed points of $F_0$. The same concepts generalize to tackle $F_m.$ The only change that we need to accommodate is the dimension. We cite the following result, then generalize it to $F_m.$
\begin{theorem}\cite{Al-Ca-Ka2024B} \label{Th-GlobalStability}
Let $F_0$ be monotonic in each of its arguments. Suppose $F_0$ has a unique fixed point and no pseudo-fixed points. If for each initial condition $X_0$ in the domain of $F_0,$ there exists a point $P_\tau$ such that 
\begin{equation}\label{theoremi}
x\leq y,\;\; x<F_0(P_\tau),\;\; F_0(P^t_\tau)<y \quad \text{and}\quad P_\tau\leq_\tau X_0 \leq P_\tau^t,
\end{equation}
then the fixed point of $F_0$ is a global attractor. 
\end{theorem}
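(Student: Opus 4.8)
The plan is to run a nested-box (monotone iteration) argument driven by the $\tau$-monotonicity of $F_0$, using the strict inequalities in \eqref{theoremi} to ignite a contraction that monotonicity then propagates. Throughout I regard $F_0(P_\tau)$ and $F_0(P_\tau^t)$ as functions of the two scalars $x\leq y$, and I write $T(x,y)=(F_0(P_\tau),F_0(P_\tau^t))$ for the induced planar \emph{box map} on the half-plane $\{x\leq y\}$. Note that $T$ fixes the diagonal point $(\bar x,\bar x)$ (there $P_\tau=P_\tau^t=(\bar x,\dots,\bar x)$ and $F_0(\bar x,\dots,\bar x)=\bar x$), and that $P_\tau\leq_\tau P_\tau^t$ whenever $x\leq y$, so $F_0(P_\tau)\leq F_0(P_\tau^t)$ and $T$ maps $\{x\leq y\}$ into itself. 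A first bookkeeping step is forward invariance: since \eqref{theoremi} places all coordinates of $X_0$ in $[x,y]$, $\tau$-monotonicity gives $x_1=F_0(X_0)\in[F_0(P_\tau),F_0(P_\tau^t)]\subseteq[x,y]$, and by induction the whole orbit and every window stay in $[x,y]$.

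Next I would record the correct \emph{competitive} monotonicity of $T$. Raising the lower scalar $x$ increases $F_0$ in each increasing argument of $P_\tau$ and leaves the decreasing arguments untouched, while lowering the upper scalar $y$ decreases the inputs sitting in the decreasing arguments and so raises $F_0$ there as well; hence $F_0(P_\tau)$ is nondecreasing in $x$ and nonincreasing in $y$. The symmetric computation shows $F_0(P_\tau^t)$ moves the opposite way. Consequently $T$ preserves inclusions of boxes: if $[a_1,b_1]\subseteq[a_0,b_0]$ then $T(a_1,b_1)$ has a higher lower endpoint and a lower upper endpoint than $T(a_0,b_0)$, so its box is contained in that of $T(a_0,b_0)$. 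This is the engine that nests the boxes.

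I would then iterate from the initial corner. Set $x_0^-=x$, $x_0^+=y$ and $(x_{i+1}^-,x_{i+1}^+)=T(x_i^-,x_i^+)$. The strict hypotheses $x<F_0(P_\tau)$ and $F_0(P_\tau^t)<y$ give $x_0^-<x_1^-\leq x_1^+<x_0^+$, and the inclusion-preservation just established upgrades this to a genuinely nested sequence, with $x_i^-\uparrow L_\infty$ and $x_i^+\downarrow U_\infty$, $L_\infty\leq U_\infty$. Passing to the limit in $(x_{i+1}^-,x_{i+1}^+)=T(x_i^-,x_i^+)$ and using continuity of $F_0$ shows the pair $(L_\infty,U_\infty)$ is a fixed point of $T$, i.e. $L_\infty=F_0(P_\tau)$ and $U_\infty=F_0(P_\tau^t)$ evaluated at $(L_\infty,U_\infty)$. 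By the definition of pseudo-fixed point together with the hypotheses that $F_0$ has none and a unique genuine fixed point, this forces $L_\infty=U_\infty=\bar x$. Finally, a window-refilling induction ties the scalar boxes back to the orbit: for $n\geq 1$ every entry of $X_{n-1}$ lies in $[x,y]$, so $x_n\in[x_1^-,x_1^+]$; once all $k$ entries of a window lie in $[x_i^-,x_i^+]$ the next value lands in $[x_{i+1}^-,x_{i+1}^+]$, giving $x_n\in[x_i^-,x_i^+]$ for all $n\geq (i-1)k+1$. Hence $L_\infty\leq\liminf_n x_n\leq\limsup_n x_n\leq U_\infty=\bar x$, and since $X_0$ was arbitrary, $\bar x$ is a global attractor.

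I expect the main obstacle to be the monotonicity step: because $F_0$ mixes increasing and decreasing arguments, one must check that raising the lower endpoint and lowering the upper endpoint \emph{reinforce} rather than cancel in both components of $T$, as this competitive structure is precisely what makes the boxes nest. The window-refilling induction and the forward-invariance check are routine once the nesting is in place, and the only other delicate point is that the limiting pair solves the box-map equations \emph{exactly} (not merely as inequalities) — which the monotone iteration delivers, in contrast to a bare $\liminf/\limsup$ estimate.
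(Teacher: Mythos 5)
Your argument is correct, but note that the paper itself does not prove this statement: it is imported verbatim from \cite{Al-Ca-Ka2024B}, and the only hint of the intended mechanism (visible in a commented-out block of the source) is the \emph{embedding technique}, in which the $k$-dimensional non-monotone system is embedded into a $2k$-dimensional map $G(X,U)$ acting on pairs of full state windows, monotone for the product order $\tau\times\tau$, whose convergent orbits squeeze the orbits of $F_0$. What you do instead is the classical nested-interval (``$M$--$m$'' or full-limiting-sets) argument: you collapse the pair of windows to the pair of scalar box corners $(x,y)$, iterate the induced mixed-monotone planar map $T(x,y)=(F_0(P_\tau),F_0(P_\tau^t))$, and then reconnect to the orbit by the window-refilling induction with the $(i-1)k+1$ bookkeeping. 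The two routes are cousins --- your $T$ is essentially the restriction of the embedded dynamics to constant windows --- but yours is coarser and more elementary: it avoids constructing $G$ and its order-theoretic convergence theory at the cost of the extra refilling step, while the embedding approach generalizes more readily (which is exactly what the paper needs later for Theorem \ref{Th-GlobalStability-Main2}, where $F_m^*$ is constant in $m$ arguments and the product-order machinery carries over with no change). All the individual steps you flag as delicate check out: the competitive monotonicity of $T$ does make inclusions of boxes nest, the limit pair does solve the box equations exactly by continuity (guaranteed since the paper assumes $F_0$ is sufficiently smooth), and the no-pseudo-fixed-point plus uniqueness hypotheses are used exactly where they must be, to force $L_\infty=U_\infty=\bar x$.
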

Note that an expansion $F_m$ can change monotonicity when compared with $F_0$, and this change may lead to some advantages in proving global attractivity. We must clarify some technicalities that make the proof of Theorem \ref{Th-GlobalStability} valid for the generalized version we give in Theorem \ref{Th-GlobalStability-Main2}, which will fit our expansion strategy.  Assume that the delay in $F_0$ is $k$ and increases by one at each stage of the expansion process. This means that for each $m$, the delay in the $F_m$ system is $k+m.$ Therefore, the initial conditions must take the form $Y_0=(x_0,\ldots,x_{-k-m+1}).$ In this case, we can consider an auxiliary map $F_m^*$ that has $k+m$ arguments but it is constant in $m$ of them, i.e.,
$$F^*_m(Y_0)=F^*_m(x_0,\ldots,x_{-k-m+1})=F_m(x_{-m},\ldots,x_{-m-k+1}).$$
Therefore, $F_m^*:\; V^{k+m}\to V,$ where $V$ is  $\mathbb{R}$ or $\mathbb{R}_+.$ We embed the dynamical system obtained by $F^*_m$ (based on $F_m$) into the higher-dimensional dynamical system as defined in \cite{Al-Ca-Ka2024B}. Since $F_m^*$ is constant in its first $m$ arguments, we consider the monotonicity in the first $ m$ arguments to agree with the monotonicity of $F_0$ in its first argument. In this case, we have a $\tau$-partial order on $V^{k+m}.$ This enables us 
to give a generalized version of Theorem \ref{Th-GlobalStability}. We refer the reader to \cite{Al-Ca-Ka2024B} for more details about the machinery that makes our generalization possible.
\begin{theorem}\label{Th-GlobalStability-Main2}
Consider the map $F_0$ in Eq. \eqref{Eq-NonLinearm} and its expansions $F_m.$ Suppose that $F_0$ has a unique fixed point and for some $m\in \mathbb{N},$ $F_m$ is monotonic in each argument. If the conditions of Theorem \ref{Th-GlobalStability} are satisfied on $F_m,$ then $F_0$ has a global attractor. 
\end{theorem}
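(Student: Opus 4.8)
The plan is to prove global attractivity first for the expanded system governed by $F_m$, and then pull the conclusion back to $F_0$ via the solution correspondence of Proposition \ref{Pr-ChangeInFixedPoints}. The structure mirrors Part (i) of Theorem \ref{Th-Stability1} in the linear setting: stability of an expansion forces stability of the original equation.

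First I would replace $F_m$ by the auxiliary map $F_m^*:V^{k+m}\to V$ introduced above, which is constant in its first $m$ arguments and reproduces $F_m$ on the remaining $k$. Equipping $V^{k+m}$ with the $\tau$-partial order — where the first $m$ coordinates inherit the (weak, non-strict) monotonicity of $F_0$ in its first argument, and the remaining coordinates carry the monotonicity of the corresponding arguments of $F_m$ — makes $F_m^*$ monotone with respect to $\leq_\tau$. Under this setup the hypothesis that ``the conditions of Theorem \ref{Th-GlobalStability} are satisfied on $F_m$'' becomes precisely the hypothesis of Theorem \ref{Th-GlobalStability} applied to $F_m^*$: a unique fixed point, no pseudo-fixed points, and for every admissible initial vector $Y_0$ a point $P_\tau$ satisfying \eqref{theoremi}. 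Invoking the cited result of \cite{Al-Ca-Ka2024B} then yields that the fixed point is a global attractor of the $F_m$-system, so every orbit of $F_m$ converges to it. I would next reconcile the fixed points: by Proposition \ref{Pr-ChangeInFixedPoints} the fixed point $\bar x$ of $F_0$ is a fixed point of $F_m$, and since Theorem \ref{Th-GlobalStability} forces $F_m$ to have a \emph{unique} fixed point, this attractor is exactly $\bar x$.

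Finally I would transfer convergence back to $F_0$. Fix an arbitrary orbit $S_0=\{x_{-k+1},\ldots,x_0,x_1,\ldots\}$ of $F_0$. Proposition \ref{Pr-ChangeInFixedPoints} shows that, after calibrating the more numerous initial conditions of the $F_m$-equation via $z_{-m-k+j}=x_{-k+1+j}$, this orbit is realized as an orbit of $F_m$ through the identity $z_n=x_{n+m+1}$. Because every orbit of $F_m$ converges to $\bar x$, so does this particular one; hence $x_{n+m+1}\to\bar x$, which (for fixed $m$) is equivalent to $x_n\to\bar x$. As $S_0$ was arbitrary, $\bar x$ is a global attractor for $F_0$.

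The main obstacle is the embedding step rather than the bookkeeping: one must check that replacing $F_m$ by the degenerate map $F_m^*$, constant in $m$ of its coordinates, does not invalidate the monotone-dynamics machinery of \cite{Al-Ca-Ka2024B}. Concretely, the weak monotonicity assigned to the first $m$ coordinates must still render $F_m^*$ monotone under $\leq_\tau$ and must be compatible with the construction of the squeezing map on $V^{k+m}\times V^{k+m}$, so that boundedness of its orbits forces convergence of the $F_m$-orbits they bracket. Once this compatibility is granted — exactly as detailed in \cite{Al-Ca-Ka2024B} — the remaining steps reduce to a direct application of Theorem \ref{Th-GlobalStability} followed by the orbit identification of Proposition \ref{Pr-ChangeInFixedPoints}.
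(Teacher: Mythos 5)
Your proposal is correct and follows essentially the same route as the paper: apply Theorem \ref{Th-GlobalStability} (via the auxiliary map $F_m^*$ and the $\tau$-order, exactly as set up in the text preceding the theorem) to conclude that the unique fixed point is globally attracting for the $F_m$-system, then pull the conclusion back to $F_0$ using the orbit correspondence of Proposition \ref{Pr-ChangeInFixedPoints}. Your version is somewhat more explicit than the paper's about the fixed-point reconciliation and the index shift $z_n=x_{n+m+1}$, but the substance is identical.
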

\begin{proof}
For some $m\in \mathbb{N},$ assume that $F_m$ is monotonic in each argument. Since the conditions of Theorem  \ref{Th-GlobalStability} are satisfied for $F_m,$ the proof follows along the proof of Theorem  \ref{Th-GlobalStability} to obtain bounded monotonic sequences that squeeze the orbits of $F_m$, and force convergence to the unique fixed point. Hence, $F_m$ has a globally attracting fixed point. Because every orbit of $F_0$ can be an orbit of $F_m$, the fixed point of $F_0$ is globally attracting. 
\end{proof}
In conclusion, we stress that the monotonicity character of $F_m$ may differ from that of $F_0,$ and this is the driving force behind Theorem \ref{Th-GlobalStability-Main2}. The key advantage is when the range of parameters that give global stability in $F_m$ exceeds that of $F_0.$ This will be further illustrated in the application section. 
\section{Applications}

This section gives examples and models illustrating the advantages of our expansion strategy. Since local stability leads to investigating the spectrum of a Jacobian matrix, we begin with a simple algebraic example that illustrates the computational advantages of our approach.
\begin{example}\label{Ex-1}
\textbf{(An algebraic example):}
Consider the matrix
$$A= \begin{bmatrix}[1.5]
    \frac{7}{5} & -1 & -1 & -1 & 1 \\
    \frac{ 43157}{2400}& 1& 1& 2& -1 \\
    c& -2& 0& 0& 1 \\
   -\frac{139397}{2592}& 1& 1& -2& 0 \\
    c-\frac{82564}{2025}& 0& 1& -1& 1 \\
 \end{bmatrix}.$$
We investigate values of $c$ that make the spectrum of $A$ within $\mathbb{D}.$ The characteristic polynomials is given by
$$p_0(x)=x^5 - \frac{7}{5}x^4 + \frac{3509}{3600}x^3 - \frac{2933}{7200}x^2 +\frac{3199}{32400}x + \frac{299959}{16200}-c.$$
Note that we can apply our technique without checking any conditions; however, before indulging in the technique, it can be labor-saving \cite{Al-Ca-Ka2025} to test the simple necessary conditions $|det(A)|=|p(0)|<1,$ $p(1)>0$ and $(-1)^5p(-1)>0.$ These are the necessary conditions in the Jury's algorithm. If these necessary conditions lead to a vacuous domain of $c$, then there is no need to proceed.  We have
$$|det(A)|=\left|\frac{299959}{16200}-c\right|=|p(0)|<1\quad \Leftrightarrow\quad c\in I_0:=\left(\frac{283759}{16200},\frac{316159}{16200}\right)\approx (17.52,19.52).$$
$p(1)>0$ if $c<\frac{45077}{2400} \approx 18.78.$ $-p(-1)>0$ if $c>14.64.$
Therefore,  any possible value of $c$ must be within the interval $\tilde{I}_0:= \left(\frac{283759}{16200},\frac{45077}{2400}\right)\approx (17.52,18.78).$
Next, we proceed with our technique.   We have
$$\|p_0\|_{\ell_1}=\frac{251477}{64800} + \left|c-\frac{299959}{16200}\right|>2,$$ and consequently, Rouche's theorem is useless here.  Based on our notation in Section \ref{Sec-Linear}, we compute $V_{m+1}=(J_0^t)^mV_0,$ where
$$V_0= \begin{bmatrix}[1.5]
           \frac{7}{5} \\
           -\frac{3509}{3600} \\
           \frac{2933}{7200} \\
           -\frac{3199}{32400} \\
           c-\frac{299959}{16200} \\
        \end{bmatrix}
       \quad\text{and}\quad J_0^t=
                                           \begin{bmatrix}[1.5]
                                              \frac{7}{5} & 1 & 0 & 0 & 0 \\
                                              -\frac{3509}{3600} & 0 & 1 & 0 & 0 \\
                                               \frac{2933}{7200} & 0 & 0 & 1 & 0 \\
                                              -\frac{3199}{32400} & 0 & 0 & 0 & 1 \\
                                              c-\frac{299959}{16200} & 0 & 0 & 0 & 0 \\
                                            \end{bmatrix}.
 $$
We find
$$V_1= \begin{bmatrix}[1.5]
           \frac{3547}{3600} \\
           -\frac{3829}{4000} \\
           \frac{152789}{324000} \\
           c-\frac{3021983}{162000} \\
           \frac{7}{5}c-\frac{2099713}{81000} \\
        \end{bmatrix},\quad
        V_2= \begin{bmatrix}[1.5]
           \frac{15197}{36000} \\
           -\frac{2111621}{4320000} \\
           c-\frac{157704643}{8640000} \\
           \frac{7}{5}c-\frac{3034933573}{116640000} \\
           \frac{3547}{3600}c-\frac{1063954573}{58320000} \\
        \end{bmatrix},\quad
        V_3= \begin{bmatrix}[1.5]
           \frac{17659}{172800} \\
           c-\frac{1209447959}{64800000} \\
          \frac{7}{5}c- \frac{60297516251}{2332800000} \\
           \frac{3547}{3600}c-\frac{21327706663}{1166400000} \\
           \frac{15197}{36000}c-\frac{4558476923}{583200000} \\
        \end{bmatrix} .$$
$V_1$ and $V_2$ do not help, but $\|V_3\|_1$ can. Indeed, we obtain
$$\|V_3\|_1<1 \quad \text{for}\quad c\in I_3:=\left(\frac{860489737}{46994400},\frac{45077}{2400}\right)\approx (18.31,18.78).$$ This result can be improved further by computing $V_4;$ however, we skip the details. As we illustrated in Fig. \ref{Fig-Eigenvalues}, the location of the eigenvalues within $\mathbb{D}$ affects the speed of capturing them in our technique.
\\

It is worth mentioning that it is possible to rescale the unit disk and investigate the zeros inside a disk of radius $R$. In this case, you must handle the monic polynomial $R^{-5}p_0(Rx).$ For instance, from $\|V_1\|_1<1,$ it is easy to find that 
$$17.860\approx \frac{33561737}{1879200}<c<\frac{36297467}{1879200}\approx 19.315$$
will make all zeros of $p_0$ within a disk of radius $R=\frac{3}{2}.$ 
Similarly, for $\|V_3\|_1<1,$  $c\in [17.570,19.540]$ implies all zeros are located within a disk of radius $R=1.25.$ Finally, we note that running Jury's algorithm on this example results in complex expressions. The interested reader can compute and compare the complexity between the two approaches. 
\end{example}

\subsection{Ricker type models}
We consider a population model with delay $k=2$ and constant stocking in the form
\begin{equation}\label{Eq-Ricker1}
x_{n+1}=F_0(x_n,x_{n-1},x_{n-2})=x_nf(x_{n-2})+h,\quad h>0,
\end{equation}
where $h>0$ is a stocking or perturbation constant. The density dependent function $f$ is assumed to be decreasing, $f:\; [0,\infty)\to (0,\infty)$ and $\lim_{t\to \infty}tf(t)=0$. A typical choice of this map is $f(t)=e^{b-t},$ which represents the Ricker model \cite{Al-Ka2024,Al-Ca-Ka2024B}. Therefore, we consider $f(0)=b>0$ throughout our discussion on this model.  At this stage, we keep $f$ in general, but we assume it to be differentiable.
The assumptions on $f$ in System \eqref{Eq-Ricker1} guarantee the existence of a unique positive equilibrium that we denote by $\bar x.$ The existence follows from the fact that $h>0$ and $\lim tf(t)=0.$ On the other hand, uniqueness can be verified as follows: Let 
$$\bar x=\bar xf(\bar x)-h\quad \text{and}\quad \bar y=\bar y f(\bar y)+h,\;\;\bar x\neq \bar y.$$
Subtract to obtain 
\begin{align*}
1=&\bar y\frac{f(\bar y)-f(\bar x)}{\bar y-\bar x}+f(\bar x)\\
=&\bar y f^\prime(\xi)+f(\bar x)\\
=&\bar y f^\prime(\xi)+1-\frac{h}{\bar x}<1,
\end{align*}
which is not possible. Observe that $\bar x>h$ and $f(\bar x)=1-\frac{h}{\bar x}<1.$
\\

Now, we focus on local stability. Computing the eigenvalues of the Jacobian matrix is a formidable task. So, we do it through our established expansion technique. The function $F_0$ depends on three variables, say $(x,y,z).$ At the equilibrium $\bar x$, we have
\begin{align*}
a_0=&\frac{\partial F_0}{\partial x}=f(\bar x)=1-\frac{h}{\bar x}\\
a_1=&\frac{\partial F_0}{\partial y}=0\\
a_2=&\frac{\partial F_0}{\partial z}=\bar xf^\prime(\bar x).
\end{align*}
Therefore, we have $V_0=(a_0,0,a_2),$ where $0<a_0<1$ and $a_2<0.$ We obtain our local stability conditions from $\|V_m\|_1<1,$ where
$$V_{m+1}=(J_0^t)^mV_0\quad \text{and}\quad
J_0^t=\begin{bmatrix}[1.5]
           1-\frac{h}{\bar x}&1&0 \\
           0&0&1 \\
           \bar xf^\prime(\bar x)&0&0 \\
           \end{bmatrix}.$$
Since $\|V_0\|_1=\left(1-\frac{h}{\bar x}\right)+\bar x |f^\prime(\bar x)|.$ This gives us
\begin{equation}\label{In-V0}
\|V_0\|_1<1 \quad \Leftrightarrow\quad a_0+|a_2|<1 \quad \Leftrightarrow\quad|f^\prime(\bar x)|<\frac{h}{\bar x^2}.
\end{equation}
 Next, we have
$$\|V_1\|_1<1 \quad \Leftrightarrow\quad (a_0+1)(a_0-1+|a_2|)<0,$$
which does not improve the inequality obtained from $\|V_0\|_1<1$ since $a_0+1>0.$ So, we proceed to $V_2$ and $V_3.$ We have
\begin{equation}\label{In-V2}
\|V_2\|_1<1 \quad \Leftrightarrow\quad |a_0^3+a_2|+a_0(1+a_0)|a_2|<1
\end{equation}
and
\begin{equation}\label{In-V3}
\|V_3\|_1<1 \quad \Leftrightarrow\quad a_0|a_0^3+2a_2|+|a_2(a_0^3+a_0^2+a_2)|<1.
\end{equation}
The conditions obtained in inequalities \eqref{In-V0}, \eqref{In-V2} and \eqref{In-V3} are all sufficient conditions for the local stability of $\bar x.$ However, determining the necessary conditions is doable because the characteristic polynomial of the matrix $J_0$ is of degree three. Indeed, we obtain
$$p(t)=t^3-a_0t^2-a_2.$$
Since $a_0>0$ and $a_2<0,$  there is one negative root. To have the negative root larger than -1, we need $p(-1)>-1.$ The other two roots are complex or real and symmetric around $\frac{2}{3}a_0.$ Therefore, we must have $p(1)>0.$ Put both conditions together to obtain
$$-1<a_0+a_2<1.$$
The remaining case is when two roots are non-real, and in this case, isolate the real and imaginary parts of $p(x+iy)$ to obtain
$$(a_0 - 3x)y^2 =a_2+a_0x^2- x^3\quad and \quad y^2=3x^2-2a_0x.$$
Algebraic manipulations between these two equations and the inequality $x^2+y^2<1$ give us a new condition on the parameters, which is given by
$$ a_2(a_2-a_0)<1.$$
Hence, the eigenvalues are in $\mathbb{D}$ if
\begin{equation}\label{In-LocalStability-Ricker}
|a_0+a_2|<1\quad \text{and}\quad a_2(a_2-a_0)<1,\quad \text{where}\quad 0<a_0<1,\; a_2<0.
\end{equation}
It is possible to consider a larger delay in Eq. \eqref{Eq-Ricker1}. Our algorithm works similarly in this case, but computing the eigenvalues of $J_0$ becomes challenging. However, the main objective here is to illustrate the implementation of our developed theory. Part (i) of Fig. \ref{Fig-LocalStabilityRicker} illustrates the stability regions in the $(-a_2,a_0)$-plane based on the obtained conditions in \eqref{In-V0} to \eqref{In-V3}. For the specific choice $f(t)=e^{b-t}$ (i.e., the Ricker model), we can translate the stability region into the $(h,b)$-plane as given in Part (ii) of the same figure. In this case, we have $b=\bar x+\ln\left(1-\frac{h}{\bar x}\right),$ where $\bar x$ and $h$ can be obtained from the constraints on $a_0$ and $a_2.$ Indeed, $\bar x=\frac{-a_2}{a_0}$ and $h=\frac{a_2(a_0-1)}{a_0}.$ $a_0=1$ gives $h=0$ in Part (ii) of the figure. The boundary condition $a_2(a_2-a_0)=1$ gives 
$$\left(\bar x-h\right)^2(1+\bar x)=\bar x, \quad\text{where}\quad (\bar x-h)=\bar xe^{b-\bar x}.$$
The curve is labeled as ``LC" in Part (ii) of the figure, which forms the upper boundary of the local stability region. Note that an explicit expression in $b$ and $h$ for the LC curve is not impressive, but it can be found based on the fact that $\bar x>h.$   On the other hand, the curve $a_0-a_2=1$ (obtained from $\|V_0\|_1<1$) translates into 
$$\bar x= \frac{1}{2}(h+h_0),\quad \text{where}\quad  h_0:=\sqrt{h^2+4h}\quad \text{and}\quad (\bar x-h)=\bar xe^{b-\bar x} .$$
 This is the curve $b_\infty$ found in \cite{Al-Ca-Ka2024B}, which gives us the boundary of the global stability region in the $(h,b)$-plane as found in \cite{Al-Ca-Ka2024B}, i.e.,
$$(h,b):\; b<b_\infty=\ln\left(1+\frac{1}{2}(h-h_0)\right)+\frac{1}{2}(h+h_0).$$
Observe that $b$ is increasing in $\bar x,$ and consequently, the stability region extends when conditions \eqref{In-V2} or \eqref{In-V3} give a larger value of $\bar x.$ Explicit expressions become challenging to obtain, but Fig. \ref{Fig-LocalStabilityRicker} clarifies the relationship between the regions in the $(-a_2,a_0)$-plane, and the $(h,b)$-plane when $f(t)=\exp(b-t)$.
\definecolor{MyMaroon}{rgb}{128,0,0}
\definecolor{MyOrange}{rgb}{255,87,51}
\definecolor{ffvvqq}{rgb}{1,0.3333333333333333,0}
\definecolor{qqqqff}{rgb}{0.,0.,1.}
\definecolor{ffqqqq}{rgb}{1.,0.,0.}
\definecolor{cqcqcq}{rgb}{0.7529,0.7529,0.7529}
\begin{figure}[htbp]
\centering
\begin{minipage}[t]{0.5\textwidth}
\begin{center}
\begin{tikzpicture}[line cap=round,line join=round,>=triangle 45,x=1.0cm,y=1.0cm,scale=3.0]
\draw[line width=1.0pt,color=red,dashed] (0,1)--(0.5,1.5);
\draw[line width=1.2pt,color=red] (0,1)--(0.62,1.0);
\fill[line width=1pt,color=ffqqqq,fill=olive!30,fill opacity=1.0] (0.0025587402831601085,1.0000016325977032) -- (0.5858059087831621,0.9994632793509961) -- (0.5899528275216583,0.9576440714798042) -- (0.6048646372937606,0.9027431443888763) -- (0.6221692151330205,0.8671207167073338) -- (0.6038873789867656,0.7986343361739312) -- (0.5862905399306916,0.7302053566354707) -- (0.5808046107277185,0.6725922033753593) -- (0.5857790740502431,0.5978507832930657) -- (0.5945854723046727,0.5547682521976696) -- (0.6072026053412964,0.5126969719562907) -- (0.6233678468219132,0.47125862342625363) -- (0.6521355485052627,0.41288157479683274) -- (0.6823208352137107,0.3624030219913857) -- (0.71971960558292,0.30826198657688897) -- (0.7601012349273998,0.2561817327828609) -- (0.8044024739423712,0.20390316506255196) -- (0.8460733301029869,0.1578084065321847) -- (0.8792634085945888,0.12256402632525315) -- (0.9116980850960654,0.08900420219866972) -- (0.9574747814695552,0.04260247017742287) -- (0.991875285143704,-0.008124716853545504) -- (0,0) -- cycle;
\fill[line width=1pt,color=ffqqqq,fill=cyan!50,fill opacity=1.0] (0.0028434137191456757,0.9990539913582257) -- (0.04566951498734224,0.9852309515094558) -- (0.0975348613810666,0.9695117414553143) -- (0.15043629756675003,0.9545536922557958) -- (0.20419105372645008,0.9403840260793799) -- (0.25562924058848535,0.9277231151613342) -- (0.2998616357551912,0.9174869476352272) -- (0.33977239966190254,0.9087329017071144) -- (0.3974431702001321,0.8968342755815959) -- (0.48172147904841917,0.8808998523288495) -- (0.5555666144211551,0.8682077604379559) -- (0.6156197882502316,0.8586676291803916) -- (0.6193612594748579,0.7975149845398417) -- (0.6138078365340238,0.7296455264273286) -- (0.6133178964279453,0.6900350503823172) -- (0.6157859086552143,0.6401668835020284) -- (0.6247523714310479,0.5737684389199891) -- (0.6396352205169472,0.5108467457581254) -- (0.6642625901714185,0.44006682921527973) -- (0.6962507124165729,0.37184098463974397) -- (0.740318204678145,0.29776763331317324) -- (0.78403715125681,0.23641194025317921) -- (0.8077649727394165,0.2062950108502536) -- (0.8459403066862821,0.1611047063965118) -- (0.898700718517722,0.10327706513952394) -- (0.9361520143665008,0.06434668340213284) -- (0.9675252691736914,0.03254140020672208) -- (0.9965763455262295,-0.003423653412515565) -- (0.8,0) -- (0.6,0) -- (0.4,0) -- (0.2,0) -- (0,0) -- cycle;
\draw[line width=1pt,color=magenta] (0.0028,0.999) -- (0.046,0.985) -- (0.0975,0.9695) -- (0.1504,0.95455) -- (0.204,0.9404) -- (0.2556,0.9277) -- (0.29986,0.9175) -- (0.33977,0.9087) -- (0.39744,0.8968) -- (0.4817,0.881) -- (0.556,0.868) -- (0.6156,0.858667) -- (0.61936,0.7975) -- (0.6138,0.7296) -- (0.6133,0.69) -- (0.6158,0.6402) -- (0.62475,0.574) -- (0.6396,0.511) -- (0.664,0.44) -- (0.696,0.37184) -- (0.7403,0.298) -- (0.784,0.2364) -- (0.80776,0.206) -- (0.846,0.1611) -- (0.8987,0.1033) -- (0.9362,0.064) -- (0.9675,0.033) -- (0.997,0);
\draw[line width=1.2pt,domain=0.5:0.62,smooth,variable=\x,color=red,dashed] plot ({\x},{(1/\x-\x)});
\draw[line width=1.2pt,domain=0.62:1.0,smooth,variable=\x,color=red] plot ({\x},{(1/\x-\x)});
\fill[fill=gray!20] (0,0)--(1,0)--(0,1)--(0,0);
\draw[line width=1.0pt,color=blue] (0,1)--(1,0);
\draw[-triangle 45, line width=1.0pt,scale=1] (0,0) -- (1.5,0) node[below] {$-a_2$};
\draw[line width=1.0pt,-triangle 45] (0,0) -- (-0.2,0);
\draw[-triangle 45, line width=1.0pt,scale=1] (0,0) -- (0,1.5) node[left] {$a_0$};
\draw[line width=1.0pt,-triangle 45] (0,0) -- (0.0,-0.2);
\draw[scale=1] (1,0) node[below] {\footnotesize $1 $};
\draw[scale=1] (0,1.0) node[left] {\footnotesize $1 $};
\draw[scale=1] (0.6,-0.3) node[below] {\footnotesize (i) The general case};
\end{tikzpicture}
\end{center}
\end{minipage}%
\begin{minipage}[t]{0.5\textwidth}
\begin{center}
\begin{tikzpicture}[line cap=round,line join=round,>=triangle 45,x=1.0cm,y=1.0cm,scale=1.0]
\draw[-triangle 45, line width=1.0pt,scale=1] (0,0) -- (0,4.5) node[left] {$b$};
\draw[line width=1.0pt,-triangle 45] (0,0) -- (0.0,-0.5);
\draw[line width=0.8pt,color=green,dashed,domain=0:4.0] plot(\x,{\x});
\draw[scale=1] (0.5,0.6) node[above,rotate=20] {\scriptsize $LC$};\draw[line width=1.0pt,color=red,smooth]
(0.0,0.6)--(0.05,0.613198)--(0.1,0.614348)--(0.15,0.619992)--(0.2,0.629162)--(0.25,0.641194)--(0.3,0.655605)--(0.35,0.672036)--(0.4, 0.690209)--(0.5,0.730946)--(0.6,0.77651)--(0.7,0.826004)--(0.8,0.878782)--(0.85,0.906245)--(0.95, 0.963071)--
(1.,0.992351)--(1.1,1.05247)--(1.2,1.11447)--(1.3,1.17815)--(1.4,1.24335)--(1.5,1.30993)--(1.6,1.37777)--(1.7,1.44676)--(1.75,1.48166)--(1.8,1.51682)--(1.9,1.58786)--(1.95,1.62373)--(2.,1.65982)--(2.1,1.73263)--(2.15,1.76933)--(2.2,1.80623)--
(2.3,1.88059)--(2.35,1.91803)--(2.4,1.95565)--(2.5,2.03137)--(2.6,2.10771)--(2.7,2.18465)--(2.8,2.26214)--(2.9,2.34017)--(3.,2.4187)--(3.1,2.49772)--(3.2,2.57719)--(3.3,2.6571)--(3.4,2.73743)--(3.5,2.81816)--(3.6,2.89927)--(3.7,2.98075)--(3.8,3.06259)--
(3.9,3.14476)--(3.95,3.18597)--(4.,3.22726)--(4.1,3.31008)--(4.15,3.3516)--(4.2,3.39319)--(4.3,3.4766)--(4.35,3.51841)--(4.4,3.56029)--(4.45,3.60224)--(4.5,3.64425)--(4.6,3.72848)--(4.7,3.81295)--(4.8,3.89768)--(4.9,3.98264)--(5.,4.06783);
\fill[fill=gray!20] (5.0,3.937)--(4.97,3.9)--
(4.939,3.876)--(4.9,3.845)--(4.869,3.81)--(4.83,3.78)--(4.79,3.75)--(4.76,3.72)--
(4.7288,3.693)--(4.69,3.66)--(4.6587,3.63)--(4.62,3.6)--(4.588679,3.57)--(4.5536358173076925,3.54)--
(4.52,3.5)--(4.4835,3.48)--(4.4485,3.45)--(4.41,3.42)--(4.378,3.39)--(4.34,3.36)--
(4.31,3.33)--(4.27,3.2997)--(4.24,3.3)--(4.2,3.239)--(4.168,3.2)--(4.1,3.2)--
(4.098,3.15)--(4.063,3.119)--(4.03,3.089)--(3.99,3.0596)--(3.95,3.0297)--(3.9,2.9998)--
(3.88,2.97)--(3.85,2.94)--(3.8177,2.91)--(3.78,2.88)--(3.747,2.85)--(3.7,2.82)--
(3.6775,2.79)--(3.64,2.76)--(3.607,2.73)--(3.57,2.7028)--(3.537,2.67)--(3.502,2.64)--
(3.467,2.6143)--(3.43,2.58)--(3.397,2.55)--(3.362,2.53)--(3.34,2.497)--(3.29,2.467)--
(3.257,2.438)--(3.22198,2.40888)--(3.1869,2.379)--(3.15,2.35)--(3.1168,2.32)--(3.0818,2.29)--
(3.04676,2.263)--(3.012,2.234)--(2.976,2.205)--(2.941,2.176)--(2.90,2.147)--(2.87,2.12)--
(2.84,2.0896)--(2.8,2.06)--(2.766,2.03)--(2.731,2.0034)--(2.696,1.97)--(2.66,1.946)--
(2.626,1.91755)--(2.59,1.889)--(2.556,1.86)--(2.52,1.83)--(2.486,1.8037)--(2.451,1.775)--(2.42,1.747)--(2.38,1.7188)--(2.35,1.69)--(2.31,1.66)--(2.275,1.63)--(2.24,1.606)--
(2.2057,1.578)--(2.17,1.55)--(2.1356,1.52)--(2.10,1.49466)--(2.0655,1.46688)--(2.03,1.439)--
(1.995,1.4115)--(1.96,1.3839)--(1.925,1.356)--(1.89,1.328889)--(1.855,1.30)--(1.82,1.274)--
(1.785,1.2468)--(1.75,1.2196)--(1.715,1.19)--(1.68,1.165)--(1.645,1.138)--(1.61,1.1114)--
(1.5749,1.0845)--(1.54,1.0577)--(1.5048,1.031)--(1.4698,1.004)--(1.434758,0.9778)--(1.3997,0.95)--
(1.36467,0.9249)--(1.3296,0.8986)--(1.29,0.87)--(1.259,0.846)--(1.224,0.82)--(1.189,0.794)--
(1.154,0.768)--(1.119,0.74)--(1.084,0.7169)--(1.049,0.69)--(1.014,0.666)--(0.979,0.64)--
(0.944,0.615)--(0.92,0.59)--(0.87,0.565)--(0.839,0.54)--(0.8,0.52)--(0.7689,0.49)--
(0.73,0.466)--(0.6988,0.44)--(0.66,0.42)--(0.63,0.39)--(0.594,0.37)--(0.558,0.346)--
(0.52,0.32)--(0.488,0.299)--(0.45,0.276)--(0.418,0.253)--(0.38,0.23)--(0.35,0.21)--
(0.31,0.185)--(0.278,0.163)--(0.243,0.141)--(0.208,0.1197)--(0.173,0.098)--(0.138,0.077)--
(0.103,0.057)--(0.068,0.0369)--(0.033,0.0)--(0,0)--(5.0,0)--(5.0,3.937);
\fill[fill=cyan!50] (5.0,3.937)--(4.97,3.9)--
(4.939,3.876)--(4.9,3.845)--(4.869,3.81)--(4.83,3.78)--(4.79,3.75)--(4.76,3.72)--
(4.7288,3.693)--(4.69,3.66)--(4.6587,3.63)--(4.62,3.6)--(4.588679,3.57)--(4.55,3.54)--
(4.52,3.5)--(4.4835,3.48)--(4.4485,3.45)--(4.41,3.42)--(4.378,3.39)--(4.34,3.36)--
(4.31,3.33)--(4.27,3.2997)--(4.24,3.3)--(4.2,3.239)--(4.168,3.2)--(4.1,3.2)--
(4.098,3.15)--(4.063,3.119)--(4.03,3.089)--(3.99,3.0596)--(3.95,3.0297)--(3.9,2.9998)--
(3.88,2.97)--(3.85,2.94)--(3.8177,2.91)--(3.78,2.88)--(3.747,2.85)--(3.7,2.82)--
(3.6775,2.79)--(3.64,2.76)--(3.607,2.73)--(3.57,2.7028)--(3.537,2.67)--(3.502,2.64)--
(3.467,2.6143)--(3.43,2.58)--(3.397,2.55)--(3.362,2.53)--(3.34,2.497)--(3.29,2.467)--
(3.257,2.438)--(3.22198,2.40888)--(3.1869,2.379)--(3.15,2.35)--(3.1168,2.32)--(3.0818,2.29)--
(3.04676,2.263)--(3.012,2.234)--(2.976,2.205)--(2.941,2.176)--(2.90,2.147)--(2.87,2.12)--
(2.84,2.0896)--(2.8,2.06)--(2.766,2.03)--(2.731,2.0034)--(2.696,1.97)--(2.66,1.946)--
(2.626,1.91755)--(2.59,1.889)--(2.556,1.86)--(2.52,1.83)--(2.486,1.8037)--(2.451,1.775)--(2.42,1.747)--(2.38,1.7188)--(2.35,1.69)--(2.31,1.66)--(2.275,1.63)--(2.24,1.606)--
(2.2057,1.578)--(2.17,1.55)--(2.1356,1.52)--(2.10,1.49466)--(2.0655,1.46688)--(2.03,1.439)--
(1.995,1.4115)--(1.96,1.3839)--(1.925,1.356)--(1.89,1.328889)--(1.855,1.30)--(1.82,1.274)--
(1.785,1.2468)--(1.75,1.2196)--(1.715,1.19)--(1.68,1.165)--(1.645,1.138)--(1.61,1.1114)--
(1.5749,1.0845)--(1.54,1.0577)--(1.5048,1.031)--(1.4698,1.004)--(1.434758,0.9778)--(1.3997,0.95)--
(1.36467,0.9249)--(1.3296,0.8986)--(1.29,0.87)--(1.259,0.846)--(1.224,0.82)--(1.189,0.794)--
(1.154,0.768)--(1.119,0.74)--(1.084,0.7169)--(1.049,0.69)--(1.014,0.666)--(0.979,0.64)--
(0.944,0.615)--(0.92,0.59)--(0.87,0.565)--(0.839,0.54)--(0.8,0.52)--(0.7689,0.49)--
(0.73,0.466)--(0.6988,0.44)--(0.66,0.42)--(0.63,0.39)--(0.594,0.37)--(0.558,0.346)--
(0.52,0.32)--(0.488,0.299)--(0.45,0.276)--(0.418,0.253)--(0.38,0.23)--(0.35,0.21)--
(0.31,0.185)--(0.278,0.163)--(0.243,0.141)--(0.208,0.1197)--(0.173,0.098)--(0.138,0.077)--
(0.103,0.057)--(0.068,0.0369)--(0.033,0.0)--(0,0)--(0.0002,0.0165)--(0.0009,0.0371)--(0.0025,0.0617)--(0.0054,0.0946)--(0.0094,0.1297)--(0.0141,0.1639)--
(0.0176,0.1865)--(0.0230,0.2184)--(0.0285,0.2489)--(0.0349,0.2823)--(0.0461,0.3361)--(0.0550,0.3764)--(0.0646,0.4178)--(0.0757,0.4637)--(0.0836,0.4958)--(0.0915,0.5269)--(0.1028,0.5701)--
(0.1434,0.5571)--(0.1698,0.5448)--(0.2115,0.5302)--(0.2605,0.5197)--(0.3069,0.5156)--
(0.3571,0.5165)--(0.4015,0.5212)--(0.4757,0.5361)--(0.5599,0.5612)--(0.6431,0.5927)--(0.7617,0.6462)--(0.9192,0.7288)--(1.0586,0.8101)--(1.2345,0.9206)--(1.3991,1.0300)--(1.6100,1.1769)--
(1.9375,1.4161)--(2.5211,1.8653)--(3.1076,2.3368)--(3.9623,3.0484)--(4.9383,3.8840)--(5.0,3.937);
\draw[line width=1pt,color=blue,smooth,samples=100,domain=0.02:5.0] plot(\x,{ln((\x*\x+4*\x)^0.5-\x)-ln(\x+(\x*\x+4*\x)^0.5)+\x/2+(\x*\x+4*\x)^0.5/2});
\draw[line width=1pt,color=magenta,smooth]
(0.0000,0.0000)--(0.0002,0.0165)--(0.0009,0.0371)--(0.0025,0.0617)--(0.0054,0.0946)--(0.0094,0.1297)--(0.0141,0.1639)--
(0.0176,0.1865)--(0.0230,0.2184)--(0.0285,0.2489)--(0.0349,0.2823)--(0.0461,0.3361)--(0.0550,0.3764)--(0.0646,0.4178)--(0.0757,0.4637)--(0.0836,0.4958)--(0.0915,0.5269)--(0.1028,0.5701)--(0.1434,0.5571)--(0.1698,0.5448)--(0.2115,0.5302)--(0.2605,0.5197)--(0.3069,0.5156)--
(0.3571,0.5165)--(0.4015,0.5212)--(0.4757,0.5361)--(0.5599,0.5612)--(0.6431,0.5927)--(0.7617,0.6462)--(0.9192,0.7288)--(1.0586,0.8101)--(1.2345,0.9206)--(1.3991,1.0300)--(1.6100,1.1769)--(1.9375,1.4161)--(2.5211,1.8653)--(3.1076,2.3368)--(3.9623,3.0484)--(4.9383,3.8840);
\draw[-triangle 45, line width=1.0pt,scale=1] (0,0) -- (5.3,0) node[below] {$h$};
\draw[line width=1.0pt,-triangle 45] (0,0) -- (-0.5,0);
\draw[scale=1] (4.5,3.5) node[below,rotate=39] {\footnotesize $b_\infty$};
\draw[scale=1] (0,1) node[left,rotate=0] {\scriptsize $1$};
\draw[scale=1] (3.8,3.7) node[above,rotate=45] {\footnotesize $b=h$};
\draw[scale=1] (1.5,3.0) node[above,rotate=30] {\footnotesize Instability};
\draw[scale=1] (2.5,-0.3) node[below] {\footnotesize (ii) The case $f(t)=e^{b-t}$};
\end{tikzpicture}
\end{center}
\end{minipage}%
\caption{Part (i) of this figure shows the stability regions in the $(-a_2,a_0)$-plane obtained from the inequalities in \eqref{In-V0} to \eqref{In-V3}. In particular, the grey shaded region belongs to $\|V_0\|_1<1,$ the cyan shaded region belongs to  $\|V_1\|_1<1,$ and the third shaded region belongs to  $\|V_2\|_1<1.$ Note that there is an overlap between the regions. The connected red curve represents the boundary of the local stability region obtained by the conditions in \eqref{In-LocalStability-Ricker}. Part (ii) of the figure is similar but reflects the regions in the $(h,b)$-plane for the particular case $f(t)=e^{b-t}$. The region that belongs to  $\|V_2\|_1<1$ in Part (ii) was left unshaded to avoid overcrowding.}\label{Fig-LocalStabilityRicker}
\end{figure}
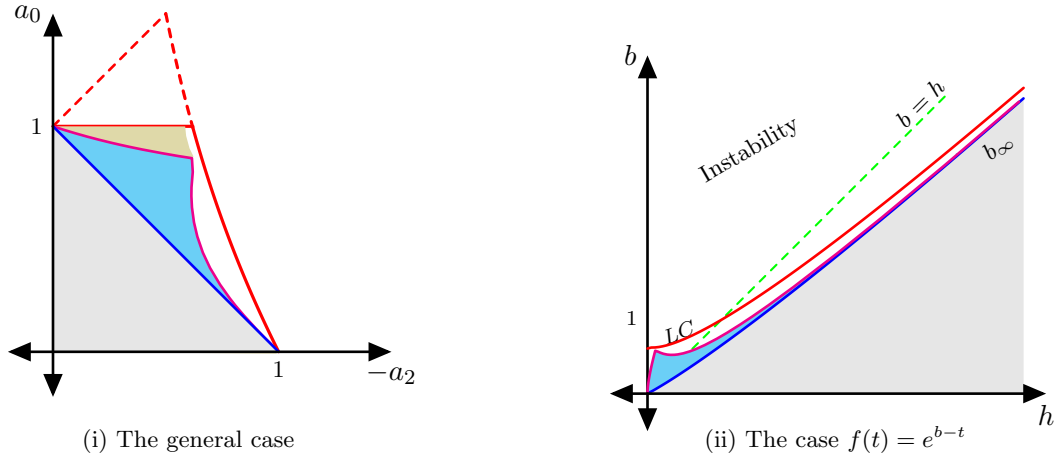

Next, we proceed to global stability based on our expansion strategy. When we use our process to increase the delay by one unit, i.e., $m=1$ in the above process, we obtain
\begin{equation}\label{Eq-Ricker2}
x_{n+1}=F_1(x_{n-1},x_{n-2},x_{n-3})= (x_{n-1}f(x_{n-3})+h)f(x_{n-2})+h.
\end{equation}
By Proposition \ref{Pr-Equilibrium}, $F_1$ has a unique fixed point, which is the fixed point of $F_0.$ The rule of thumb here is to expand till we reach a desired form of monotonicity.  Observe that $F_1$ is decreasing in $x_{n-2}$ and $x_{n-3}.$  So, we expand again to obtain
\begin{equation}\label{Eq-Ricker3}
\begin{split}
x_{n+1}=&F_2(x_{n-2},x_{n-3},x_{n-4})\\
=& x_{n-2}f(x_{n-2})f(x_{n-3})f(x_{n-4})+hf(x_{n-2})f(x_{n-3})+hf(x_{n-2})+h.
\end{split}
\end{equation}
At this point, the decreasing terms started to impact the increasing component. This can be observed by finding the derivative with respect to $x_{n-2}$. The negative terms in the derivative have the potential to neutralize the positive terms. In fact, since $x_n>h$ for all $n\geq 1,$ a mild condition like $h\geq 1$ ensures that $F_2$ is decreasing in all components. Repeating the expansion in the same way complicates the obtained expressions. So, we stop at $F_2,$ and we establish the following fact about $F_2.$  
\begin{proposition}\label{Pr-Ricker}
$F_2$ has a unique fixed point, which is the fixed point of $F_0.$ 
\end{proposition}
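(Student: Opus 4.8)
The plan is to reduce the fixed-point equation for $F_2$ to that of $F_0$ by an algebraic factorization on the diagonal, thereby avoiding any monotonicity hypothesis. First I would dispose of existence: $\bar x$ is automatically a fixed point of $F_2$ by Proposition \ref{Pr-ChangeInFixedPoints} (equivalently, by the preservation identity \eqref{Eq-PreservingFixedPoints}). Hence the entire content of the statement is the \emph{uniqueness} claim, and $\bar x$ is the natural candidate.

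For uniqueness I would evaluate $F_2$ on the diagonal $(x,x,x)$ and abbreviate $f:=f(x)$. From the expanded form in Eq. \eqref{Eq-Ricker3} one gets $F_2(x,x,x)=xf^3+hf^2+hf+h$, so that
$$F_2(x,x,x)-x=x\bigl(f^3-1\bigr)+h\bigl(f^2+f+1\bigr).$$
The key step is the factorization $f^3-1=(f-1)(f^2+f+1)$, which lets me extract the common factor:
$$F_2(x,x,x)-x=\bigl(f^2+f+1\bigr)\bigl(xf-x+h\bigr)=\bigl(f(x)^2+f(x)+1\bigr)\bigl(F_0(x,x,x)-x\bigr),$$
since $F_0(x,x,x)=xf(x)+h$. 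Because $f(x)>0$ for every $x\ge 0$, the quadratic factor $f(x)^2+f(x)+1$ is strictly positive (indeed $>1$), so $F_2(x,x,x)-x$ and $F_0(x,x,x)-x$ vanish at exactly the same points. Thus the fixed points of $F_2$ coincide with those of $F_0$, and since $F_0$ was already shown to possess the unique positive fixed point $\bar x$ (with $\bar x>h$), it follows that $\bar x$ is the unique fixed point of $F_2$.

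I would also remark on why the monotonicity route is deliberately bypassed. Proposition \ref{Pr-Equilibrium} would settle the matter if every intermediate map were increasing in its first argument, but $F_2$ is engineered to be \emph{decreasing} in all its arguments (under a mild condition such as $h\ge 1$), so that proposition does not apply; the diagonal factorization is the appropriate substitute. The only genuine obstacle is spotting the telescoping identity $f^{m+1}-1=(f-1)(f^m+\cdots+f+1)$ specialized to $m=2$, which collapses the cubic-in-$f$ fixed-point equation onto the linear-in-$f$ equation of $F_0$; once this identity is in hand the rest is routine bookkeeping, namely matching $xf-x+h$ against $F_0(x,x,x)-x$ and invoking the previously established uniqueness for $F_0$.
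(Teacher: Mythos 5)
Your proof is correct, but it takes a genuinely different route from the paper. The paper's proof is a one-line appeal to Proposition \ref{Pr-Equilibrium}: since $F_0(x,y,z)=xf(z)+h$ and $F_1(x,y,z)=xf(y)f(z)+hf(y)+h$ are both increasing in their first argument (because $f>0$), that proposition identifies the fixed points of $F_0$, $F_1$ and $F_2$. You instead factor the diagonal equation directly, $F_2(x,x,x)-x=\bigl(f(x)^2+f(x)+1\bigr)\bigl(F_0(x,x,x)-x\bigr)$, and observe that the quadratic factor is strictly positive; this is a clean, self-contained computation that never leaves the diagonal, and it is essentially the geometric-sum identity that underlies the telescoping argument in the proof of Proposition \ref{Pr-Equilibrium}, made explicit for this model. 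One correction to your closing remark: Proposition \ref{Pr-Equilibrium} \emph{does} apply here. Its hypothesis concerns the maps $F_0,\ldots,F_m$ appearing in the telescoping sum, so to equate the fixed points of $F_2$ with those of $F_1$ one only needs $F_0$ and $F_1$ to be increasing in their first arguments; the monotonicity of $F_2$ itself (which is indeed arranged to be decreasing) is irrelevant, and this is precisely the argument the paper runs. So your factorization is a valid and arguably more transparent substitute, but it is not forced on you by any failure of the paper's hypothesis.
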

\begin{proof}
This result comes as a consequence of Proposition \ref{Pr-Equilibrium}. Observe that $F_0(x,y,z)=xf(z)+h$ and 
$$F_1(x,y,z)=xf(y)f(z)+hf(y)+h.$$
Both $F_0$ and $F_1$ are increasing in the first component. Thus, $F_0,F_1$ and $F_2$ have the same fixed point. 
\end{proof}
We need the following lemma to ensure the monotonicity of $F_2$ in all its arguments. 
\begin{lemma}\label{Lem-Ricker}
Consider $f$ as given in Eq. \eqref{Eq-Ricker1} and define $q(x)=x+\frac{f(x)}{f^\prime(x)}.$ If $q(x)>-\frac{h}{b}\left(1+\frac{1}{b}\right)$ for all $x,$ then $F_2$ is decreasing in $x_{n-2}.$
\end{lemma}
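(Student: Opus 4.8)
The plan is to prove the claim by differentiating $F_2$ in its first slot and showing the derivative is nonpositive under the stated hypothesis. To ease notation, write $x=x_{n-2}$, $y=x_{n-3}$, $z=x_{n-4}$, so that Eq. \eqref{Eq-Ricker3} reads $F_2(x,y,z)=xf(x)f(y)f(z)+hf(x)f(y)+hf(x)+h$. Setting $g(x):=f(x)+xf'(x)=\frac{d}{dx}\bigl(xf(x)\bigr)$, I would first record the identity $\frac{\partial F_2}{\partial x}=g(x)f(y)f(z)+hf'(x)\bigl(f(y)+1\bigr)$. Because $f>0$ and $f'<0$, the second summand is strictly negative, so the entire task is to control the first summand $g(x)f(y)f(z)$, which is positive exactly when $g(x)>0$.

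I would then split on the sign of $g(x)$. If $g(x)\le 0$, both summands are nonpositive and the derivative is negative, so nothing is needed. The substantive case is $g(x)>0$. Here I would invoke the monotonicity of $f$ to bound the two ``free'' arguments: since $y,z\ge 0$ and $f$ is decreasing, $f(y)\le f(0)=b$ and $f(z)\le f(0)=b$. The goal $g(x)f(y)f(z)\le -hf'(x)\bigl(f(y)+1\bigr)$ then follows if I can establish it after replacing $f(z)$ by its maximal value $b$, which reduces matters to the one-variable inequality $f(y)\bigl(bg(x)+hf'(x)\bigr)\le -hf'(x)$ in the quantity $t=f(y)\in(0,b]$.

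The key algebraic step is to show that the single pointwise condition $b^2g(x)\le -hf'(x)(b+1)$, imposed for all $x$, is enough. The left-hand side of the reduced inequality is affine in $t$: when its slope $bg(x)+hf'(x)$ is nonpositive the inequality is immediate (the right-hand side $-hf'(x)$ being positive), and when the slope is positive the extreme value $t=b$ gives exactly $b^2g(x)+bhf'(x)\le -hf'(x)$, i.e. $b^2g(x)\le -hf'(x)(b+1)$. It remains to translate this into the stated form: substituting $g(x)=f(x)+xf'(x)$ and dividing through by $f'(x)<0$ (which reverses the inequality) yields $b^2\bigl(x+\frac{f(x)}{f'(x)}\bigr)+h(b+1)\ge 0$, that is $q(x)\ge -\frac{h(b+1)}{b^2}=-\frac{h}{b}\bigl(1+\frac1b\bigr)$. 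The strict hypothesis $q(x)>-\frac{h}{b}(1+\frac1b)$ then forces the derivative to be strictly negative, so $F_2$ is decreasing in $x_{n-2}$.

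I expect the main obstacle to be the coupling in the case $g(x)>0$: because $f(y)$ appears both in the positive term $g(x)f(y)f(z)$ and inside the dominating negative term $-hf'(x)\bigl(f(y)+1\bigr)$, one cannot maximize over $f(y)$ and $f(z)$ independently. The affine-in-$t$ reduction above is precisely what resolves this, isolating $t=f(y)=b$ (equivalently the boundary configuration $y=z=0$) as the binding case; checking carefully that this is genuinely the worst case, and that the resulting pointwise bound on $g(x)$ is sufficient uniformly over all $(y,z)$, is the delicate part of the argument.
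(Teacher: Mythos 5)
Your proposal is correct and follows essentially the same route as the paper's proof: the paper factors the derivative as $\frac{\partial F_2}{\partial x}=f^\prime(x)\left[q(x)f(y)f(z)+h(1+f(y))\right]$ and shows the bracket is positive, which is the same computation as yours with $g(x)=f^\prime(x)q(x)$ left unfactored, and your three cases ($g(x)\le 0$, slope $bg(x)+hf^\prime(x)\le 0$, slope positive) correspond exactly to the paper's cases $q(x)\ge 0$, $q(x)\ge -\tfrac{h}{b}$, and $q(x)<-\tfrac{h}{b}$, using the same bounds $f(y),f(z)\le b$ and arriving at the same threshold $q(x)>-\tfrac{h}{b}\left(1+\tfrac{1}{b}\right)$.
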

\begin{proof}
From \eqref{Eq-Ricker3}, we have
$$F_2(x,y,z)=xf(x)f(y)f(z)+hf(x)f(y)+hf(x)+h,$$
and we obtain 
$$\frac{\partial F_2}{\partial x}=f^\prime(x)\left[q(x)f(y)f(z)+h(1+f(y))\right].$$
Since $f^\prime(x)<0,$ we focus on the sign of the bracketed expression. It is positive if $q(x)\geq 0.$ So, consider $q(x)<0.$ In this case, used the fact that $f(z)\leq b$ to obtain 
$$q(x)f(y)f(z)+h(1+f(y))\geq f(y)\left(bq(x)+h)\right)+h.$$
If $q(x)\geq -\frac{h}{b}$, then the right-hand side of the inequality is positive. However, if $q(x)< -\frac{h}{b},$ then we again use the fact that $f(y)\leq b$ to obtain 
$$f(y)\left(bq(x)+h)\right)+h>b^2q(x)+h(b+1),$$
and the condition of $q$ makes the right-hand side of the inequality positive. Hence, the proof is complete.
\end{proof}

\begin{remark}
When $f(t)=\exp(b-t),$ $h>\frac{b^2}{b^2+b+1}$ is sufficient to make $F_2$ decrease in all its arguments. This follows from the fact that $q(x)=\bar x-1$ in Lemma \ref{Lem-Ricker} and the fact that $\bar x>h.$
\end{remark}
 Now, we are in a position to provide the following global stability result:
\begin{theorem}\label{Th-RickerGlobalStability}
Consider $F_0$ as given in Eq. \eqref{Eq-Ricker1} and $F_2$ as given in Eq. \eqref{Eq-Ricker3}. Let $g(x)=F_2(x,x,x)$ and $q(x)$ as defined in Lemma \ref{Lem-Ricker}. If $g$ has no $2$-cycle and $q(x)>-\frac{h}{b}\left(1+\frac{1}{b}\right),$  then the equilibrium $\bar x$ of Eq. \eqref{Eq-Ricker1}  is globally attracting.
\end{theorem}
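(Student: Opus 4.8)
The plan is to deduce the result from the generalized embedding theorem, Theorem \ref{Th-GlobalStability-Main2}, applied with $m=2$. Accordingly, three things must be verified: that $F_0$ has a unique fixed point, that $F_2$ is monotonic in each of its arguments, and that the hypotheses of Theorem \ref{Th-GlobalStability} (a unique fixed point, no pseudo-fixed points, and the boxing condition \eqref{theoremi}) hold for $F_2$. The uniqueness of the fixed point of $F_0$ was already established, and Proposition \ref{Pr-Ricker} transfers it to $F_2$, so those two items are immediate.

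First I would settle the monotonicity of $F_2$. Writing $F_2(x,y,z)=xf(x)f(y)f(z)+hf(x)f(y)+hf(x)+h$ as in \eqref{Eq-Ricker3}, the coefficient of $f(y)$ equals $f(x)\bigl(xf(z)+h\bigr)>0$ and the coefficient of $f(z)$ equals $xf(x)f(y)\geq 0$ on $\mathbb{R}_+$; since $f$ is decreasing, $F_2$ is decreasing in $y$ and in $z$. Decreasing behaviour in the first argument $x$ is exactly the content of Lemma \ref{Lem-Ricker} under the standing hypothesis $q(x)>-\frac{h}{b}\bigl(1+\frac{1}{b}\bigr)$. Hence $F_2$ is decreasing in all three arguments, the $\tau$-order is the reverse of the componentwise order, and the extremal corners are $P_\tau=(y,y,y)$ and $P_\tau^t=(x,x,x)$.

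The key step is to translate the ``no pseudo-fixed point'' requirement. With $F_2$ decreasing in every argument and $g(x)=F_2(x,x,x)$ a decreasing one-dimensional map, the defining system $x=F_2(P_\tau)=g(y)$, $y=F_2(P_\tau^t)=g(x)$ of a pseudo-fixed point with $x\neq y$ is precisely the statement that $\{x,y\}$ is a $2$-cycle of $g$. Thus the hypothesis that $g$ has no $2$-cycle is equivalent to $F_2$ having no pseudo-fixed point, and this equivalence is what drives the whole argument.

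It remains to produce, for each initial condition, the point $P_\tau$ of \eqref{theoremi}; this is where boundedness enters and is the main technical obstacle. Orbits of $F_2$ satisfy $x_{n+1}\geq h$ because every summand in \eqref{Eq-Ricker3} is nonnegative, while $xf(x)\leq \sup_{t\geq 0}tf(t)<\infty$ (finite since $tf(t)\to 0$) together with $f\leq b$ gives a uniform upper bound $U$. Given $X_0\in\mathbb{R}_+^3$, I would take $x=0$ and $y$ larger than both $\max_i (X_0)_i$ and $g(0)=hb^2+hb+h$; then $0<g(y)$ and $g(0)<y$ furnish the strict inequalities $x<F_2(P_\tau)$ and $F_2(P_\tau^t)<y$, while $P_\tau\leq_\tau X_0\leq_\tau P_\tau^t$ holds by construction. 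Monotonicity keeps the orbit inside this box, and the embedding of \cite{Al-Ca-Ka2024B} squeezes it between two monotone sequences that converge to fixed points of the associated order-preserving map; the absence of pseudo-fixed points forces both limits to coincide with $\bar x$. The hypotheses of Theorem \ref{Th-GlobalStability-Main2} are therefore met, so $\bar x$ is globally attracting for $F_2$, and since every orbit of $F_0$ is an orbit of $F_2$ by Proposition \ref{Pr-ChangeInFixedPoints}, it is globally attracting for $F_0$ as well.
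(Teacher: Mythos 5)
Your proposal is correct and follows essentially the same route as the paper: it invokes Theorem \ref{Th-GlobalStability-Main2} for $m=2$, uses Proposition \ref{Pr-Ricker} and Lemma \ref{Lem-Ricker} for the fixed point and the monotonicity, and identifies pseudo-fixed points of $F_2$ with $2$-cycles of $g$ exactly as the paper does. The only difference is in the last step, where the paper argues geometrically that the feasible set of \eqref{theoremi} is unbounded via the symmetry of the curves $y=g(x)$ and $x=g(y)$, whereas you exhibit the explicit choice $x=0$ and $y>\max\{\max_i(X_0)_i,\,h(b^2+b+1)\}$ — a slightly more concrete instantiation of the same verification.
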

\begin{proof}
By Proposition \ref{Pr-Ricker}, $F_2$ and $F_0$ share the unique fixed point $\bar x.$ Showing that $\bar x$ is globally attracting for $F_2$ implies it is globally attracting for $F_0.$ Next, we want to apply Theorem \ref{Th-GlobalStability-Main2} on $F_2.$ $F_2$ is constant in $x_n$ and $x_{n-1};$ however, we consider $F_2$ as non-increasing in both arguments. Also, $F_2$ is decreasing in all other arguments by Lemma \ref{Lem-Ricker}. Because the one-dimensional map $g$ has no $2$-cycle, the system $(y,x)=(F_2(x,x,x),F_2(y,y,y))$ has a unique solution given by the fixed point of $F_0.$ So, $F_2$ has no pseudo fixed points.  Next, we consider the partial order $\leq_\tau$ that aligns with the monotonicity of $F_2.$ In this case, $P_\tau=(y,y,y,y,y),$ and for each initial condition $X_0,$ we need to find $x<y$ such that $P_\tau\leq_\tau X_0,$ 
$$x<F_2(y,y,y)\quad \text{and}\quad  F_2(x,x,x)<y.$$
From the unique intersection, symmetry between the curves of $F_2(x,x,x)=y,$ $x=F_2(y,y,y)$ and the fact that 
$$F_2(0,0,0)=h(b^2+b+1),\qquad \lim_{x\to\infty}F_2(x,x,x)=h,$$
we obtain an unbounded solution set.  Therefore, the conditions of Theorem \ref{Th-GlobalStability} apply to $F_2$, and Theorem \ref{Th-GlobalStability-Main2} guarantees that $\bar x$ is globally attracting. 
\end{proof}

Finally, note that the one-dimensional map $g(x)=F_2(x,x,x)$ in Theorem \ref{Th-ClarkGlobalStability} is decreasing, and when $g$ and $g^{-1}$ have a unique intersection, we can guarantee that it has no $2$-cycle by assuming $g^\prime(\bar x)\geq -1.$ This condition translates into 
$$|f^\prime(\bar x)|\leq \frac{(2\bar x-h)(\bar x^2-h\bar x+h^2)}{\bar x^2(3\bar x^2-3h\bar x+h^2)}.$$
Interestingly, this is the same as the local stability condition obtained from Inequality \eqref{In-V2} when $a_2<-a_0^3$. 

\subsection{Clark's type models}
For species that need $k$-seasons to be recruited to the breeding population, Clark \cite{Cl1976} proposed the delay-difference equation
\begin{equation}\label{Eq-Clark}
x_{n+1}=ax_n+(1-a)f(x_{n-k}),
\end{equation}
where $0<a<1$  represents a survivorship coefficient, and $f$ denotes a stock-recruitment function constrained by specific species' characteristics \cite{Al1963,Fi-Go1984,Bo1992}. For instance, $f$ can be considered decreasing and of the generalized Beverton-Holt form or Ricker type, i.e., $f(t)=\frac{bt}{1+t^s},\; b>1,s>0,$ or  $f(t)=ct^se^{-t},\; c,s>0,$ respectively.  In a simple Cournot oligopoly model \cite{Ca2023,Th1960,Ga2009}, the characteristic polynomial of the Jacobian matrix is reduced to the one obtained from Eq. \eqref{Eq-Clark} when $f$ is chosen to be an affine map. Numerous scholarly articles have been published since then, exploring various aspects of its dynamics \cite{El-Li2005,El-Li2006,El-Lo-Li2008}. For more details, we refer the reader to a short survey by Liz \cite{Li2020} and the references therein. Our objective is not to examine the model and its dynamics extensively but to apply our expansion technique to this equation and showcase its simplicity and effectiveness in establishing certain outcomes. Readers are encouraged to form conclusions by comparing our results with other findings.
\\

Observe that a fixed point of $f$ is an equilibrium solution of Eq. \eqref{Eq-Clark}. So, we assume throughout this subsection that $f\in \mathcal{C}^1([0,\infty))$ and has at least one fixed point. Also, we assume that $f$ is positive and asymptotic to zero, i.e., $\lim_{t\to\infty} f(t)=0.$ At a fixed point $\bar x$ and based on our notations in Section \ref{Sec-Nonlinear},  we obtain the first row of the Jacobian matrix $J_0$ as
$$a_0=a,\; a_1=0,\;\ldots,\;a_{k-1}=0,\;a_{k}=(1-a)\beta,$$
where $\beta=f^\prime(\bar x),$
and its characteristic equation is
\begin{equation}\label{Eq-CharClark}
x^{k+1}-ax^k-(1-a)\beta=0.
\end{equation}
The local stability of Eq. \eqref{Eq-Clark} can be treated based on the results obtained in \cite{Pa1996,Ku1994} for equations of the form $x_{n+1}=ax_n+bx_{n-k}.$  However, our approach is more general, and we show how to tackle the local stability of Eq. \eqref{Eq-Clark} based on our expansion strategy.   We obtain $V_0=(a,0,(1-a)\beta),$ and consequently,
$$\|V_0\|_1=a+(1-a)|\beta|<1\quad \text{if}\quad |\beta|<1.$$
 This sufficient condition is the same as the one obtained in \cite{Cl1976} based on Rouche's theorem.  Next,  finding $V_1$ to $V_{k-1}$ does not help us to improve the result. The first improvement comes when finding $V_k.$ Indeed, we obtain
 \begin{equation}\label{Eq-Expansionk}
 \begin{split}
 x_{n+1}=&F_k(x_{n-k},\ldots,x_{n-2k})\\
 =& a^{k+1}x_{n-k}+(1-a)f(x_{n-k})+(1-a)\sum_{j=1}^ka^jf(x_{n-k-j}),
 \end{split}
 \end{equation}
 and consequently,
\begin{align*}
V_k=&(a^{k+1}+\widetilde{\beta},a\widetilde{\beta},a^2\widetilde{\beta},a^3\widetilde{\beta},\ldots,a^k\widetilde{\beta})\\
V_{k+1}=&(a^{k+2}+2a\widetilde{\beta},a^2\widetilde{\beta},a^3\widetilde{\beta},\ldots,a^k\widetilde{\beta},(a^{k+1}+\widetilde{\beta})\widetilde{\beta}),
\end{align*}
where $\widetilde{\beta}=(1-a)\beta.$ Therefore,
\begin{equation}\label{Eq-Vk}
\begin{split}
\|V_k\|_1=&|a^{k+1}+(1-a)\beta|+a|\beta|(1-a^k)\\
\|V_{k+1}\|_1=&|a^{k+2}+2a(1-a)\beta|+a^2|\beta|(1-a^{k-1})+(1-a)|\beta||a^{k+1}+(1-a)\beta|.
\end{split}
\end{equation}
The boundary cases $a=0$ and $a=1$ can be tackled directly from Eq. \eqref{Eq-CharClark}; however, they are beyond our interest since $0<a<1.$ We focus on the stability region with respect to $\beta,$ and how it changes as we increase $k$.  At $\beta=1,$ we obtain $\|V_0\|_1=\|V_k\|_1=\|V_{k+1}\|_1=1,$ and this shows that we have some eigenvalues on the unit circle. Indeed, one of the eigenvalues is one at $\beta=1.$ Furthermore, $x^k(x-a)=\beta(1-a)$ has one solution larger than one for all $\beta>1.$ Therefore, $\bar x$ is unstable when $\beta>1,$ and it is locally stable if $|\beta|< 1$  regardless of the delay $k$.   Furthermore, the eigenvalues of the Jacobian matrix can be captured in the first step of our approach when $-1<\beta<1$. Another interesting fact that can be obtained from the norms in Systems \eqref{Eq-Vk} is that when $k\to \infty$, we obtain $\|V_k\|_1 \to |f^\prime(\bar x)|$ and
 $$\|V_{k+1}\|_1 \to |f^\prime(\bar x)|\left((1-a)^2|f^\prime(\bar x)|+a(2-a)\right),$$
 which is less than one when  $|f^\prime(\bar x)|<1.$ On the other hand, we can obtain the unit eigenvalues by substituting $x=e^{it}$ in Eq. \eqref{Eq-CharClark}. Indeed, this requires the solution of
 $$a=\frac{\sin((k+1)t)}{\sin(kt)}\quad \text{and}\quad \beta=\frac{\sin(t)}{\sin((k+1)t)-\sin(kt)}.$$
  Next, we need to focus on $\beta\leq -1.$ This turns out to be interesting since the stability region starts to be influenced by the delay $k.$
 In Fig. \ref{Fig-LocalStabilityClark}, we illustrate how the stability region shrinks when $k$ increases and the values of $(a,\beta)$ in which some eigenvalues are on the unit circle.
\definecolor{MyMaroon}{rgb}{128,0,0}
\definecolor{MyOrange}{rgb}{255,87,51}
\definecolor{ffvvqq}{rgb}{1,0.3333333333333333,0}
\definecolor{qqqqff}{rgb}{0.,0.,1.}
\definecolor{ffqqqq}{rgb}{1.,0.,0.}
\definecolor{cqcqcq}{rgb}{0.7529,0.7529,0.7529}
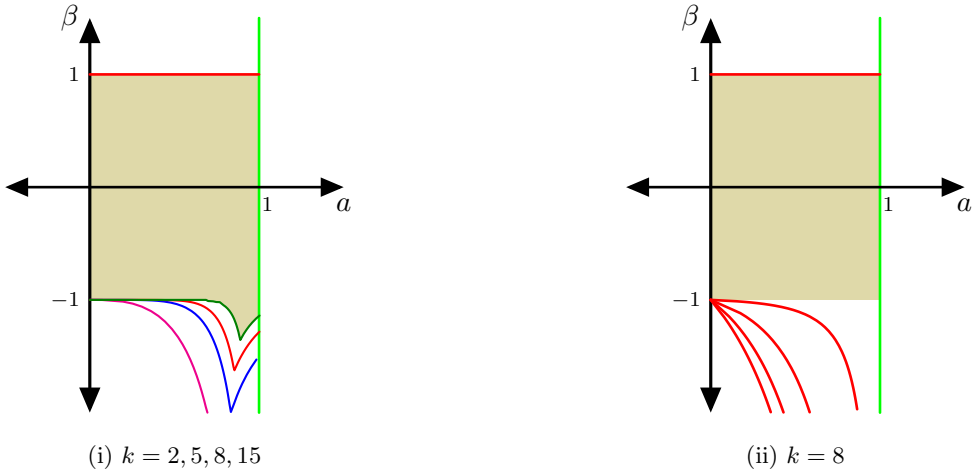
\begin{figure}[htbp]
\centering
\begin{minipage}[t]{0.5\textwidth}
\begin{center}
\begin{tikzpicture}[line cap=round,line join=round,>=triangle 45,x=1.5cm,y=1.0cm,scale=1.5]
\fill[line width=1pt,color=ffqqqq,fill=olive!30,fill opacity=1.0] (0.01,-1)--(0.01,1)--(1,1)--(1,-1)--(0.01,-1);
\fill[line width=1pt,color=ffqqqq,fill=olive!30,fill opacity=1.0] (0.68668,-1.0)-- (0.69899,-1.01)-- (0.72883,-1.01276)-- (0.747,-1.0189)-- (0.768,-1.02)-- (0.789587,-1.0467)-- (0.8018,-1.060)-- (0.813,-1.07583)-- (0.824,-1.0949)-- (0.8355,-1.1196)-- (0.845,-1.145)-- (0.8526,-1.169)-- (0.858,-1.19)-- (0.8638,-1.2127)-- (0.869,-1.237)-- (0.874,-1.26309)-- (0.878,-1.284)-- (0.881,-1.304)-- (0.8838,-1.323)-- (0.8892,-1.35559)-- (0.9103,-1.29966)-- (0.926,-1.2645)-- (0.9415,-1.233)-- (0.9595,-1.20107)-- (0.983,-1.16494)-- (1.0,-1.139)--(1,-1)--(0.68668,-1)--cycle;
\draw[-triangle 45, line width=1.2pt,scale=1] (0,0) -- (0,1.5) node[left] {$\beta$};
\draw[line width=1.2pt,-triangle 45] (0,0) -- (0.0,-2);
\draw[line width=1.0pt,color=green,smooth] (1,-2)--(1,1.5);
\draw[line width=1.0pt,color=red,smooth] (0,1)--(1,1);
\draw[line width=0.8pt,color=magenta,smooth] (-3.497354434701949E-9,-0.9999999981885735)-- (0.07351019080633864,-1.00079477788826)-- (0.19611743557460687,-1.0152008114386022)-- (0.25325309741612434,-1.0330222399571054)-- (0.30751256882834044,-1.0599011288826217)-- (0.3625656434335355,-1.1000917427122106)-- (0.4061178265885558,-1.1435806859716773)-- (0.4441816916678101,-1.192107252496566)-- (0.4863119178116453,-1.2599187192965366)-- (0.521683043230978,-1.3309417736687985)-- (0.5481872404953287,-1.3944506271166457)-- (0.5702566558966768,-1.4553231761607703)-- (0.5932940228223353,-1.5279278774379632)-- (0.612534453002192,-1.5968022305294534)-- (0.631743802873977,-1.674258351057599)-- (0.647508832631303,-1.7452900623971574)-- (0.6609869749148147,-1.8120998843781924)-- (0.6748559662448519,-1.8874613624563694)-- (0.6854690144708248,-1.9501997170655234)-- (0.695,-2.0);
\draw[line width=0.8pt,color=blue,smooth]
(0,-1.000000000000115)-- (0.03473251060167802,-1.0000000052500813)-- (0.07862826722064908,-1.0000004714928503)-- (0.13481297067142603,-1.000012011420377)-- (0.17343685226325722,-1.0000544431911658)-- (0.2138189669809474,-1.000191143362965)-- (0.2542288714903642,-1.000540133883142)-- (0.2806143913945225,-1.0009770190173537)-- (0.2999586261051932,-1.0014578529417273)-- (0.336513297854909,-1.002908536350985)-- (0.3683909878207522,-1.0050115272919051)-- (0.39902509697546124,-1.0081056518171638)-- (0.4385811305033168,-1.0143361178662489)-- (0.4785741568560916,-1.0243206256002262)-- (0.5139292050177502,-1.0375427879602694)-- (0.5446384417131681,-1.0536001045096464)-- (0.5668746692552878,-1.0686448939691515)-- (0.6010422344211881,-1.0989538849628706)-- (0.6225088968790232,-1.1235781333821833)-- (0.6450663957069481,-1.155285661355257)-- (0.6588713559446273,-1.1781972372087788)-- (0.6714266296116292,-1.2017222990407517)-- (0.6889341042476627,-1.2394454519393925)-- (0.7073793644350019,-1.2864705060506902)-- (0.7228505363703254,-1.3327868018773161)-- (0.731233846740446,-1.3609278854656803)-- (0.7429760419006608,-1.4044497453413483)-- (0.7522338833554687,-1.4425480842732892)-- (0.7603824848113949,-1.4791827484636038)-- (0.76812759616881,-1.5169893386257498)-- (0.7756178781464103,-1.5566105716063174)-- (0.7824434756283567,-1.5956016288463666)-- (0.7881704682731028,-1.6306416339436693)-- (0.7922734657081489,-1.657154467601908)-- (0.7985553690358556,-1.7002087625026256)-- (0.803919018469837,-1.739516190726729)-- (0.8077450863879233,-1.769104365941183)-- (0.8114858877381304,-1.7993656004437335)-- (0.8148144381420028,-1.8274647232498664)-- (0.819054110170524,-1.8660118256095004)-- (0.8221047029773836,-1.8941534147977674)-- (0.8254238972289148,-1.9254071561309316)-- (0.8336351165980813,-1.9941289437585705)-- (0.8490081968657595,-1.9339457531016657)-- (0.8702066695284729,-1.84785494982338)-- (0.8850768474767399,-1.7941050761051227)-- (0.9003666469623707,-1.743554776006735)-- (0.927844104283466,-1.6630603657826457)-- (0.9527443065295107,-1.5998308858831465)-- (0.9846364883401938,-1.5288340192043872);
\draw[line width=0.8pt,color=red,smooth]
(0,-1)-- (0.1,-1.0000000020150075)-- (0.1999999999456642,-1.000001030192706)-- (0.2999999499613303,-1.0000393718653275)-- (0.3999935433598867,-1.0005243560758381)-- (0.4268440354996569,-1.0009411511865884)-- (0.4607791483774017,-1.001874448143621)-- (0.48404952631444387,-1.0029219464824972)-- (0.49971847153351945,-1.003894059179962)-- (0.5202917123718416,-1.0056036092184768)-- (0.5402599878738091,-1.0078733919670415)-- (0.5599996448159115,-1.0108912580286633)-- (0.5799526636117613,-1.0149554139033836)-- (0.5998957513674484,-1.020328431252612)-- (0.6191282691665132,-1.0270956675620058)-- (0.6382427406439495,-1.0357769374279768)-- (0.6586248300664973,-1.0477552383072744)-- (0.6773107836691136,-1.061851356920097)-- (0.6935908650358996,-1.0771614825912055)-- (0.7134272258570822,-1.100573847321687)-- (0.725286918680122,-1.1176072171310198)-- (0.7401208145727296,-1.142791898025008)-- (0.7545963432187837,-1.1723287755741365)-- (0.7627391457927573,-1.1914792299456651)-- (0.7692632414735814,-1.2083230996828762)-- (0.7793525199435011,-1.2373195426669525)-- (0.7852285719323769,-1.2560425406223859)-- (0.7939551505311397,-1.2866586626640757)-- (0.8009418889088982,-1.3138685033781088)-- (0.8090195077936468,-1.3487061763710724)-- (0.8151592718760867,-1.377898920821024)-- (0.8190475840747078,-1.3977235636117076)-- (0.8248370404435386,-1.4293404051445737)-- (0.828661916057029,-1.4517173993112302)-- (0.8318711866124742,-1.471478073608864)-- (0.8347660758673916,-1.4901213691739479)-- (0.838940140877559,-1.5184578616253588)-- (0.8428266035375951,-1.54649621566755)-- (0.8460608195460902,-1.5711356128231615)-- (0.8489088782039489,-1.593881537542332)-- (0.8545910986602482,-1.62412516042839)-- (0.8761195428845527,-1.5529126727295957)-- (0.8868375395354196,-1.5210476494637497)-- (0.8950387344291122,-1.4980846578505242)-- (0.9099640400466247,-1.459190928128181)-- (0.9234023706114811,-1.427092471791573)-- (0.9381659481444143,-1.3947012978013684)-- (0.9560201976313409,-1.3591346885413822)-- (0.9752200130003847,-1.32480023833262)-- (1.0024874256973024,-1.282084701807734);
\draw[line width=0.8pt,color=MyGreen,smooth]
(0,-1)--(0.68668,-1.0)-- (0.69899,-1.01)-- (0.72883,-1.01276)-- (0.747,-1.0189)-- (0.768,-1.02)-- (0.789587,-1.0467)-- (0.8018,-1.060)-- (0.813,-1.07583)-- (0.824,-1.0949)-- (0.8355,-1.1196)-- (0.845,-1.145)-- (0.8526,-1.169)-- (0.858,-1.19)-- (0.8638,-1.2127)-- (0.869,-1.237)-- (0.874,-1.26309)-- (0.878,-1.284)-- (0.881,-1.304)-- (0.8838,-1.323)-- (0.8892,-1.35559)-- (0.9103,-1.29966)-- (0.926,-1.2645)-- (0.9415,-1.233)-- (0.9595,-1.20107)-- (0.983,-1.16494)-- (1.003,-1.139);
\draw[-triangle 45, line width=1.0pt,scale=1] (0,0) -- (1.5,0) node[below] {$a$};
\draw[line width=1.0pt,-triangle 45] (0,0) -- (-0.5,0);
\draw[scale=1] (0,1) node[left,rotate=0] {\scriptsize $1$};
\draw[scale=1] (0,-1) node[left,rotate=0] {\scriptsize $-1$};
\draw[scale=1] (1.05,0) node[below,rotate=0] {\scriptsize $1$};
\draw[scale=1] (0.5,-2.2) node[below,rotate=0] {\footnotesize (i) $k=2,5,8,15$};
\end{tikzpicture}
\end{center}
\end{minipage}%
\begin{minipage}[t]{0.5\textwidth}
\begin{center}
\begin{tikzpicture}[line cap=round,line join=round,>=triangle 45,x=1.5cm,y=1.0cm,scale=1.5]
\fill[line width=1pt,color=ffqqqq,fill=olive!30,fill opacity=1.0] (0.01,-1)--(0.01,1)--(1,1)--(1,-1)--(0.01,-1);
\draw[-triangle 45, line width=1.2pt,scale=1] (0,0) -- (0,1.5) node[left] {$\beta$};
\draw[line width=1.2pt,-triangle 45] (0,0) -- (0.0,-2);
\draw[line width=1.0pt,color=green,smooth] (1,-2)--(1,1.5);
\draw[line width=1.0pt,color=red,smooth] (0,1)--(1,1);
\draw[line width=1.0pt,color=red, smooth] (0.0,-1.0)-- (0.15302037601835744,-1.0123066364473206)-- (0.19318086628995057,-1.0168780247872962)-- (0.2337798478179165,-1.0222883737053854)-- (0.27398384095657774,-1.028590399640639)-- (0.31391459493800117,-1.0359812152443209)-- (0.3465642614352677,-1.0430378438345718)-- (0.3758965308440974,-1.0503068356487089)-- (0.41959511000844335,-1.0631186753327164)-- (0.45742373585755464,-1.0765826048835414)-- (0.4877081633357727,-1.0893471810574695)-- (0.5200308314403673,-1.105381169168129)-- (0.5592336944427969,-1.12903190805281)-- (0.5862905402575693,-1.1487502066408706)-- (0.6256830098797552,-1.1839422905848598)-- (0.6538447123489637,-1.215199759499137)-- (0.6751565376058,-1.243217632033639)-- (0.6979658289777819,-1.27841784376568)-- (0.7216719326164582,-1.3221685415322828)-- (0.7375300874451692,-1.3565177811236264)-- (0.7572335494104896,-1.4063000635289848)-- (0.7747424642555346,-1.458739151403977)-- (0.7867689181683496,-1.500283548344641)-- (0.7991233411607145,-1.5486401189079797)-- (0.8075274109052002,-1.5853922080464713)-- (0.8169814461452439,-1.6310930952510694)-- (0.825932061274006,-1.6792735912118162)-- (0.8335097303563075,-1.7243874167166726)-- (0.841809515659323,-1.7790694792072437)-- (0.8477174117098971,-1.8218325244287932)-- (0.8530881544586214,-1.8638499352063433)-- (0.8572382415172852,-1.8985901700368486)-- (0.8608962234559384,-1.9310100747865317)-- (0.8650429132471792,-1.9699811931187505);
\draw[line width=1.0pt,color=red, smooth] (0,-1.0)-- (0.17529677628013313,-1.117883795806684)-- (0.20889211760269952,-1.1494160816816579)-- (0.2481176120213297,-1.1911803734999205)-- (0.2777748118616203,-1.226830345209673)-- (0.3099704552461854,-1.2701163859377305)-- (0.3385932068303306,-1.3131806378720778)-- (0.3654782923571062,-1.35812314651868)-- (0.39858318795978576,-1.4203525855562975)-- (0.4281445860668694,-1.4833825484955732)-- (0.4508331582358956,-1.5372927043127818)-- (0.47627013716168864,-1.6043119912633326)-- (0.4944862818244733,-1.657149312141688)-- (0.511969474875131,-1.7121441557005384)-- (0.5290325882242902,-1.770321294366351)-- (0.5470964317561615,-1.8373267090889216)-- (0.5570893073849901,-1.8770328013942923)-- (0.5675923353586044,-1.9209739593125468)-- (0.5763013737721769,-1.9592448104453308)-- (0.5844075796798777,-1.9964603742219766);
\draw[line width=1.0pt,color=red, smooth] (0.0,-0.999)-- (0.10852525203705027,-1.1477814333385774)-- (0.1402037161628099,-1.1998280752562431)-- (0.17001286056077494,-1.253208951551883)-- (0.19832339612184044,-1.3082909428903444)-- (0.21979176569718167,-1.3531909117350354)-- (0.2467538595055279,-1.413785174272547)-- (0.26618372859133355,-1.4606158598107024)-- (0.28132381021517805,-1.4990981840917954)-- (0.29747177985110607,-1.542200123639997)-- (0.3112018902513082,-1.5806262027578721)-- (0.32456718223142844,-1.6196992787590148)-- (0.3376266160639218,-1.6595622951600872)-- (0.3490821863452161,-1.6959785330860013)-- (0.3615655273189545,-1.7372916460759258)-- (0.38116250322685047,-1.805812927358851)-- (0.40073233521954804,-1.879088359704064)-- (0.41227323431995433,-1.924765783587224)-- (0.4217388266155201,-1.9636889730159435)-- (0.42983210751260487,-1.9980648801210479);
\draw[line width=1.0pt,color=red, smooth] (0.0,-1.0)-- (0.08135756472127313,-1.1575107099994493)-- (0.10672806685008761,-1.2129344461951286)-- (0.12560744706355706,-1.2563930899260765)-- (0.1500837817584809,-1.315770456639924)-- (0.1690047051904278,-1.3641908853218652)-- (0.18671692020393807,-1.411655682273267)-- (0.20464984186662227,-1.4619593084709162)-- (0.2205027986510913,-1.5084336470213784)-- (0.23671181444301548,-1.5580227646129334)-- (0.2507004338526757,-1.6026041329598133)-- (0.26493388148633235,-1.6497645887505639)-- (0.28046774925115087,-1.7034287770930165)-- (0.29194977913557846,-1.7446525135087838)-- (0.3044039586162196,-1.7909471022051209)-- (0.3151702925611788,-1.8323596426647784)-- (0.3235136985320287,-1.8653812280779354)-- (0.3326347329574575,-1.902447375068821)-- (0.3444307982982756,-1.9519482586724428)-- (0.3545,-1.99579);
\draw[-triangle 45, line width=1.0pt,scale=1] (0,0) -- (1.5,0) node[below] {$a$};
\draw[line width=1.0pt,-triangle 45] (0,0) -- (-0.5,0);
\draw[scale=1] (0,1) node[left,rotate=0] {\scriptsize $1$};
\draw[scale=1] (0,-1) node[left,rotate=0] {\scriptsize $-1$};
\draw[scale=1] (1.05,0) node[below,rotate=0] {\scriptsize $1$};
\draw[scale=1] (0.5,-2.2) node[below,rotate=0] {\footnotesize (ii) $k=8$};
\end{tikzpicture}
\end{center}
\end{minipage}%
\caption{Part (i) of this figure shows the stability we obtain based on our expansion strategy and the parameter values. It is done according to $\|V_k\|_1<1.$ The lower part of the region shrinks as we increase $k$ ($k=2,5,8,15$).  Part (ii) of the figure shows the curves in which we obtain eigenvalues on the unit disk for the case $k=8$. In this case, the upper curve forms the lower boundary of the region (see Proposition 3 in \cite{El-Lo-Li2008}).  A comparison between the two graphs shows the effectiveness of our approach.}\label{Fig-LocalStabilityClark}
\end{figure}
\\

 Now, we proceed to discuss global stability. The maps $F_0$ in Eq. \eqref{Eq-Clark} and $F_k$ in Eq. \eqref{Eq-Expansionk} have the same fixed points, and we assume uniqueness.

 \begin{proposition}\label{Pr-Clark}
  Consider $a$ and $f$ as given in Eq. \eqref{Eq-Clark}, and assume $f$ has a unique positive fixed point $\bar x$. Define 
  $$g_2(x)=-\frac{\gamma_1+1}{1-\gamma_1}x+\frac{2}{1-\gamma_1}f(x)\quad\text{where}\quad \gamma_1=-\frac{1+a^{k+1}}{1-a^{k+1}}.$$
  Each of the following holds true:
  \begin{description}
 \item{(i)} If $\bar x$ is locally stable for $f,$ then it is locally stable for $g_2.$ 
 \item{(ii)}  If $\bar x$ is locally stable for $f$ and $f$ is increasing, then $\bar x$ is globally stable for $g_2$ with respect to the domain $(0,\infty).$ 
\item{(iii)} If $g_2^\prime(t)\neq -1,$  then $g_2$ has no $2$-cycle.
 \end{description}
 \end{proposition}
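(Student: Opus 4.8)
The plan is to begin by rewriting $g_2$ in a transparent form, since its definition hides a simple structure. A direct computation with $\gamma_1=-\frac{1+a^{k+1}}{1-a^{k+1}}$ gives $1-\gamma_1=\frac{2}{1-a^{k+1}}$, $-\frac{\gamma_1+1}{1-\gamma_1}=a^{k+1}$ and $\frac{2}{1-\gamma_1}=1-a^{k+1}$, so that
$$g_2(x)=a^{k+1}x+(1-a^{k+1})f(x).$$
Because $0<a<1$ forces $0<a^{k+1}<1$, this exhibits $g_2$ as a convex combination of the identity and $f$; it is in fact the diagonal restriction $F_k(x,\ldots,x)$ of the $k$-fold expansion in Eq. \eqref{Eq-Expansionk}. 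I would then record two immediate consequences: $g_2(\bar x)=\bar x$ (because $f(\bar x)=\bar x$), and $g_2(x)-x=(1-a^{k+1})(f(x)-x)$, so that $g_2(x)-x$ has the same sign as $f(x)-x$ at every point. All three parts follow cleanly from this rewriting.

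For Part (i), I differentiate to get $g_2'(\bar x)=a^{k+1}+(1-a^{k+1})f'(\bar x)$, a convex combination of $1$ and $f'(\bar x)$. Local stability of $\bar x$ for the one-dimensional map $f$ means $|f'(\bar x)|<1$. The upper estimate $g_2'(\bar x)<a^{k+1}+(1-a^{k+1})=1$ is immediate, while the lower estimate gives $g_2'(\bar x)>a^{k+1}-(1-a^{k+1})=2a^{k+1}-1>-1$; hence $|g_2'(\bar x)|<1$ and $\bar x$ is locally stable for $g_2$.

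For Part (ii), I note that when $f$ is increasing so is $g_2$ (both coefficients are positive), and positivity of $g_2$ makes $(0,\infty)$ invariant. The engine is a monotone-convergence argument. Writing $h(x)=f(x)-x$, local stability gives $h'(\bar x)=f'(\bar x)-1<0$, so $h$ crosses zero downward at $\bar x$; uniqueness of the fixed point of $f$ then pins the global sign pattern $h>0$ on $(0,\bar x)$ and $h<0$ on $(\bar x,\infty)$. Consequently any orbit started in $(0,\bar x)$ is increasing and bounded above by $\bar x$ (monotonicity of $g_2$ with $g_2(\bar x)=\bar x$), while any orbit started in $(\bar x,\infty)$ is decreasing and bounded below by $\bar x$; each converges to a fixed point, which can only be $\bar x$. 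This gives global attraction on $(0,\infty)$. I expect the main obstacle to lie here, though it is purely a matter of careful bookkeeping: upgrading the local information $h'(\bar x)<0$ to the global sign pattern of $h$ via uniqueness, and confirming invariance and boundedness of the monotone orbits.

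Part (iii) is a one-line mean value argument. If $\{p,q\}$ with $p\neq q$ were a $2$-cycle, then $g_2(p)=q$ and $g_2(q)=p$, so
$$\frac{g_2(p)-g_2(q)}{p-q}=\frac{q-p}{p-q}=-1,$$
and the mean value theorem (applicable since $f\in\mathcal{C}^1$ makes $g_2\in\mathcal{C}^1$) produces a $\xi$ strictly between $p$ and $q$ with $g_2'(\xi)=-1$, contradicting the hypothesis $g_2'(t)\neq-1$. Hence $g_2$ admits no $2$-cycle.
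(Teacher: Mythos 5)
Your proof is correct and follows essentially the same route as the paper: the derivative bound $g_2'(\bar x)\in(2a^{k+1}-1,1)$ for (i), the sign of $g_2(x)-x$ combined with monotone orbits for (ii), and a mean value argument for (iii) (the paper applies it to $f$ via $\gamma_1=f'(t)$, you apply it directly to $g_2$ — the two are equivalent). Your preliminary rewriting $g_2(x)=a^{k+1}x+(1-a^{k+1})f(x)$ is a clean simplification that makes the paper's computations with $\gamma_1$ transparent, and your part (ii) usefully fills in the sign-pattern and monotone-convergence details that the paper leaves as a one-line assertion.
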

 \begin{proof}
 Part (i): Based on our definition of local stability, we have $-1<f^\prime(\bar x)<1.$ Also, observe that $\gamma_1<-1.$ Therefore, we obtain 
 $$1>g_2^\prime(\bar x)>\frac{3+\gamma_1}{\gamma_1-1}>-1.$$
 Therefore, $|g_2^\prime(\bar x)|<1$ and $\bar x$ is LAS.  Part (ii) follows from the fact that $(x-\bar x)(g_2( x)- x)<0$ for all $x>0.$ To clarify Part (iii), observe that to have a $2$-cycle, say $\{x,y\},$ we need $y=g_2(x)$ and $x=g_2(y),$ which gives us
 $$ \gamma_1=\frac{f(x)-f(y)}{x-y}=f^\prime(t),$$
 for some $t$ between $x$ and $y$. However, $f^\prime(t)=\gamma_1$ means  $g_2^\prime(t)= -1$, which is impossible by the given condition.  
 \end{proof}
Note that the scenario in which $f$ is increasing is straightforward to tackle and can be accomplished based on $F_0$ instead of $F_k$. The one-dimensional map $g(t)=at+(1-a)f(t)$ can be used to establish global stability. We shall disregard this case. One more technical result is needed before we give our global stability theorem.
 \begin{proposition}\label{Pr-Clark2}
Consider $a$ and $f$ as given in Eq. \eqref{Eq-Clark}, and assume $f$ is decreasing with a unique positive fixed point $\bar x$. Define 
$$g_3(x)=\frac{\gamma_2 x-f(x)}{\gamma_2 -1}\quad\text{where}\quad \gamma_2=\frac{1-a^{k}}{1-a}.$$
Each of the following holds true:
  \begin{description}
 \item{(i)} $g_3$ is increasing with a unique fixed point at $\bar x$ and a unique positive zero $x^*$ such that $0< x^*<\bar x.$
 \item{(ii)}  The map $g_4(x)=f^{-1}(g_3(x))$ is well defined on $(x^*,g_3^{-1}(f(0))]\to [0,\infty)$, and $g_4$ is decreasing.
 \end{description}
 \end{proposition}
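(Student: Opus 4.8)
The plan is to derive everything from the single structural fact that $\gamma_2=\frac{1-a^{k}}{1-a}=1+a+\cdots+a^{k-1}>1$ for $0<a<1$ and $k\ge 2$ (so that in particular $\gamma_2-1>0$ and the formula for $g_3$ is well posed), together with the standing hypotheses that $f$ is positive, $C^1$, strictly decreasing on $[0,\infty)$ with $\lim_{t\to\infty}f(t)=0$, and possesses the unique positive fixed point $\bar x$. I would open by recording these facts, noting that the very definition $g_4=f^{-1}\circ g_3$ presupposes $f$ injective, which forces strict monotonicity; since $f$ maps $[0,\infty)$ bijectively onto $(0,f(0)]$, the inverse $f^{-1}\colon(0,f(0)]\to[0,\infty)$ is a strictly decreasing, continuous map.

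For part (i) I would compute $g_3'(x)=\frac{\gamma_2-f'(x)}{\gamma_2-1}$. Since $f'\le 0$ and $\gamma_2>1$, numerator and denominator are both positive, so $g_3$ is strictly increasing. To locate the fixed point I would rewrite $g_3(x)=x$ as $\gamma_2 x-f(x)=(\gamma_2-1)x$, which collapses to $f(x)=x$; hence the fixed points of $g_3$ are exactly those of $f$, and uniqueness of $\bar x$ transfers verbatim. For the zero I would introduce the auxiliary function $h(x)=f(x)-\gamma_2 x$, so that $g_3(x)=-h(x)/(\gamma_2-1)$ and $g_3(x)=0\iff h(x)=0$. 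Then $h(0)=f(0)>0$, $h(x)\to-\infty$ as $x\to\infty$, and $h'(x)=f'(x)-\gamma_2<0$, so the intermediate value theorem together with strict monotonicity yields a unique positive zero $x^*$. Finally, evaluating $h(\bar x)=f(\bar x)-\gamma_2\bar x=\bar x(1-\gamma_2)<0$ and using that $h$ is decreasing gives $x^*<\bar x$, completing (i).

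For part (ii) the task is to pin down precisely the set on which $g_4$ makes sense, namely the $x$ for which $g_3(x)$ lands in the domain $(0,f(0)]$ of $f^{-1}$. Because $g_3$ is increasing with $g_3(x^*)=0$, the requirement $g_3(x)>0$ holds exactly for $x>x^*$. Because $g_3$ is continuous, increasing, with $g_3(x^*)=0<f(0)<\infty$ and $g_3(x)\to\infty$, the point $g_3^{-1}(f(0))$ exists uniquely, and $g_3(x)\le f(0)$ holds exactly for $x\le g_3^{-1}(f(0))$; intersecting the two conditions gives the stated domain $(x^*,g_3^{-1}(f(0))]$. Checking endpoints, $g_4(g_3^{-1}(f(0)))=f^{-1}(f(0))=0$ while $g_4(x)\to f^{-1}(0^+)=\infty$ as $x\to x^{*+}$, so the range is $[0,\infty)$. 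Monotonicity is then immediate: $g_4$ is the composition of the increasing map $g_3$ with the decreasing map $f^{-1}$, hence decreasing (equivalently $g_4'(x)=g_3'(x)/f'\bigl(f^{-1}(g_3(x))\bigr)<0$).

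All the computations are elementary; the step I would flag as the main thing to get right is the domain bookkeeping in part (ii) — verifying that $g_3$ actually takes values inside the range $(0,f(0)]$ of $f$, identifying the two boundary constraints with $x^*$ and $g_3^{-1}(f(0))$, and confirming the endpoint images — since this is where one could easily mis-state the interval or overlook the need for $g_3^{-1}(f(0))$ to exist. A subsidiary point worth stating explicitly at the outset is the requirement $k\ge 2$, which guarantees $\gamma_2>1$ so that $g_3$ is increasing and the denominator $\gamma_2-1$ does not vanish.
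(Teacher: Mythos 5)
Your proposal is correct and follows the same route as the paper, whose own proof merely asserts that part (i) is ``obvious since $\gamma_2>1$ and $f$ is decreasing'' and that the domain, codomain, and monotonicity in part (ii) are ``straightforward to check''; you have simply supplied the elementary verifications (the sign of $g_3'$, the cancellation showing $g_3(x)=x\iff f(x)=x$, the intermediate-value argument for $x^*$, and the domain bookkeeping for $g_4$) that the authors leave implicit. Your side remarks that one needs $\gamma_2>1$ (hence $k\ge 2$) and strict monotonicity of $f$ for $f^{-1}$ to exist are accurate readings of the paper's standing hypotheses rather than gaps in your argument.
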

 \begin{proof}
 (i) Part (i) is obvious since $\gamma_2>1$ and $f$ is decreasing. In Part (ii), the domain and codomain of $g_4$ are straightforward to check. Also, $g_4$ is decreasing because $g_3$ is increasing and $f^{-1}$ is decreasing. 
\end{proof}
 \begin{theorem}\label{Th-ClarkGlobalStability}
 Consider Eq. \eqref{Eq-Clark} and its expansion in Eq. \eqref{Eq-Expansionk}. Assume $f$ is decreasing and has a unique positive fixed point $\bar x$. Let $g_2$  be as defined in Proposition \ref{Pr-Clark}, $g_3$ and $g_4$ as defined in Proposition  \ref{Pr-Clark2}, and
 $$\gamma:=-\frac{a^{k+1}}{1-a}.$$ 
 The unique equilibrium solution of Eq. \eqref{Eq-Clark} is globally attracting in each of the following cases:
 \begin{description}
 \item{(i)}  If $\sup f^\prime(t)<\gamma$ and $f^\prime(t)\neq \gamma_1,$ 
 \item{(ii)} If $\inf f^\prime(t)>\gamma$ and $g_4$ has no $2$-cycles.
 \end{description}
 \end{theorem}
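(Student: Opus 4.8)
The plan is to apply Theorem \ref{Th-GlobalStability-Main2} to the $k$-th expansion $F_k$ recorded in Eq. \eqref{Eq-Expansionk}, so the three things to verify for $F_k$ are monotonicity in each argument, a unique fixed point with no pseudo-fixed points, and the envelope condition of Theorem \ref{Th-GlobalStability}. First I would settle monotonicity. Treating $F_k$ as a function of its genuine arguments, $\partial F_k/\partial x_{n-k}=a^{k+1}+(1-a)f'(x_{n-k})$, while $\partial F_k/\partial x_{n-k-j}=(1-a)a^{j}f'(x_{n-k-j})<0$ for $j=1,\dots,k$ since $f$ is decreasing. Thus $F_k$ is automatically decreasing in every argument except the first, and the sign of the first partial is governed by $\gamma=-a^{k+1}/(1-a)$: in case (i) the hypothesis $\sup f'<\gamma$ forces $\partial F_k/\partial x_{n-k}<0$, so $F_k$ is decreasing in all arguments; in case (ii) the hypothesis $\inf f'>\gamma$ makes it increasing in $x_{n-k}$ and decreasing in the rest. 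Either way $F_k$ is monotone in each coordinate, so the embedding applies (assigning the $k$ dummy slots of $F_k^{*}$ the increasing monotonicity of $F_0$'s first argument). Uniqueness of the fixed point is immediate: on the diagonal $g(t):=F_k(t,\dots,t)=a^{k+1}t+(1-a^{k+1})f(t)$, whose fixed points coincide with those of $f$, so $\bar x$ is the unique fixed point of $F_k$, hence of $F_0$ by Proposition \ref{Pr-ChangeInFixedPoints}.

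The core of the argument is the absence of pseudo-fixed points, where the two cases diverge. In case (i) the all-decreasing order gives $P_\tau=(y,\dots,y)$ and $P_\tau^{t}=(x,\dots,x)$, so the pseudo-fixed-point system collapses to $x=g(y),\ y=g(x)$, i.e. a $2$-cycle of the decreasing one-dimensional map $g$. Since $g$ is precisely the map $g_2$ of Proposition \ref{Pr-Clark}, and $g_2'(t)=-1$ is equivalent to $f'(t)=\gamma_1$, the hypothesis $f'(t)\neq\gamma_1$ yields $g_2'(t)\neq-1$, and Proposition \ref{Pr-Clark}(iii) excludes $2$-cycles; hence $F_k$ has no pseudo-fixed points. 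In case (ii) the mixed order gives $P_\tau=(x,y,\dots,y)$ and $P_\tau^{t}=(y,x,\dots,x)$, and substitution yields the symmetric system
\begin{equation*}
(1-a^{k+1})x=(1-a)f(x)+a(1-a^{k})f(y),\qquad (1-a^{k+1})y=(1-a)f(y)+a(1-a^{k})f(x).
\end{equation*}
Solving the first relation for $f(y)$ writes $y=f^{-1}(g_3(x))=g_4(x)$, and by symmetry $x=g_4(y)$, so a nontrivial solution is exactly a $2$-cycle of the decreasing map $g_4$ of Proposition \ref{Pr-Clark2}. The standing assumption that $g_4$ has no $2$-cycle therefore removes all pseudo-fixed points.

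It remains to check the envelope condition of Theorem \ref{Th-GlobalStability} for every initial state. The decisive observation is boundedness: since $0<a<1$ and $f$ is positive, bounded by $f(0)$, and asymptotic to $0$, the recursion $x_{n+1}=a^{k+1}x_{n-k}+(1-a)\sum_{j=0}^{k}a^{j}f(x_{n-k-j})$ forces every orbit eventually into the compact interval $[0,f(0)]$, so one may restrict to a bounded box. On this box I would combine the corner values of $F_k$ with the strict monotonicity established above to produce, for each $X_0$, a point $P_\tau$ with $x<F_k(P_\tau)$ and $F_k(P_\tau^{t})<y$ while $P_\tau\leq_\tau X_0\leq_\tau P_\tau^{t}$; the unique-intersection and symmetry structure of $g$ in case (i), and of the pair $g_3,g_4$ from Proposition \ref{Pr-Clark2} in case (ii), guarantees that the relevant solution set is unbounded, so such a $P_\tau$ always exists. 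With monotonicity, uniqueness of the fixed point, absence of pseudo-fixed points, and the envelope condition all verified for $F_k$, Theorem \ref{Th-GlobalStability} applies to $F_k$, and Theorem \ref{Th-GlobalStability-Main2} transfers global attractivity to the Clark equation \eqref{Eq-Clark}. The main obstacle I anticipate is the bookkeeping in case (ii): cleanly reducing the mixed-monotonicity pseudo-fixed-point system to a single $2$-cycle equation for $g_4$, and reconciling the coefficient $\gamma_2$ appearing there with the expansion coefficients, is the delicate step, whereas the monotonicity and boundedness arguments are routine.
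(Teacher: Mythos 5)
Your proposal is correct and follows essentially the same route as the paper: apply Theorem \ref{Th-GlobalStability-Main2} to $F_k$, read off the monotonicity from the sign of $a^{k+1}+(1-a)f'$ relative to $\gamma$, reduce the pseudo-fixed-point systems to $2$-cycles of $g_2$ (case (i)) and of $g_4$ (case (ii)), and conclude via the unbounded feasible region for $P_\tau$. Your explicit identification of the diagonal map $a^{k+1}t+(1-a^{k+1})f(t)$ with $g_2$ and the added boundedness remark are just more detailed renderings of steps the paper leaves implicit.
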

 \begin{proof}
 \begin{description}
 \item{(i)} Assume $\sup f^\prime(t)<\gamma,$ then $F_k$ is decreasing in each of its arguments. We consider the $\leq_\tau$ order that aligns with the monotonicity of $F_2$; in this case, we have $P_\tau=(y,y,\ldots,y).$ We focus on the feasible solution of the inequalities 
 $$x<y,\quad F_k(x,\ldots,x)=g_2(x)<y\quad \text{and}\quad x<F_k(y,\ldots,y)=g_2(y).$$
 From Proposition \ref{Pr-Clark}, $f^\prime(t)\neq \gamma_1$ guarantees that $g_2$-cycle. So, $g_2$ is decreasing, and the curves $y=g_2(x)$ and its inverse (the reflection about $y=x$) do not intersect at any point other than $\bar x$. This makes the set of feasible solutions for the three inequalities unbounded inside $\{(x,y): 0<x<\bar x,y\geq \bar x_2\}$. Therefore, for each initial condition $X_0=(x_{-2k},\ldots,x_0)$, there exists $P_\tau$ such that the conditions of Theorem \ref{Th-GlobalStability-Main2} are satisfied. 
 \item{(ii)} Assume $\inf f^\prime(t)>\gamma,$ then $F_k$ is increasing in the $x_{n-k}$ argument and decreasing in the others. Again, consider the $\leq_\tau$ order that aligns with the monotonicity of $F_k$, and in this case, we have $P_\tau=(x,y,\ldots,y).$ To exclude the existence of pseudo-fixed points, solve 
 $$(x,y)=(F_k(x,y,\ldots,y),F_k(y,x,\ldots,x)).$$
This means that $f(x)=g_3(y)$ and $f(y)=g_3(x),$ where $g_3$ is given in Proposition \ref{Pr-Clark2}. Because $g_4$ has no $2$-cycles, the only solution for the system of equations is $x=y=\bar x,$ which means that $F_k$ has no pseudo-fixed points.
 
 Next, we focus on the feasible solution of the inequalities 
 $$x<y,\quad x<F_k(x,y,\ldots,y)\quad \text{and}\quad F_k(y,x,\ldots,x)<y.$$
This simplifies to 
  \begin{equation}\label{In1-Clark}
 x<y,\quad f(x)<g_3(y) \quad \text{and}\quad f(y)>g_3(x). 
  \end{equation}
When $x\leq x^*\leq y$, then inequalities $x<y$ and $f(y)>g_3(x)$ are valid by default. Also, because $f$ is decreasing, $g_3$ is increasing, and they intersect at $\bar x,$ then $f(x)<g_3(y)$ is also valid. So, we proceed to focus on $x>x^*.$ Since $f$ is decreasing, $g_4=f^{-1}g_3$ becomes decreasing, and we can reduce the inequalities in \eqref{In1-Clark} to
 $$x^*< x\leq \bar x,\quad y\geq \bar x,\quad x>g_4(y)\quad \text{and}\quad y<g_4(x). $$
 $g_4$ has a vertical asymptote at $x=x^*$ and an $x$-intercept at $x=g_3^{-1}(f(0))>x^*$. This gives an unbounded region between $y=g_4(x)$ and its inverse $x=g_4(y)$. Fig. \ref{Fig-ClarkExample} gives a visual illustration of the feasible region obtained for the solution of the three inequalities in \eqref{In1-Clark}. Finally, as in Case (i), we verified that for each initial condition $X_0=(x_{-2k},\ldots,x_0)$, there exists $P_\tau$ such that the conditions of Theorem \ref{Th-GlobalStability-Main2} are satisfied, which completes the proof.  
 \end{description}
 \end{proof}
 
It is worth mentioning that Theorem 2.4.1 in \cite{Ko-La1993} states that the unique equilibrium of Eq. \eqref{Eq-Clark} is globally stable if $f$ is decreasing and has no $2$-cycle. However, Theorem \ref{Th-GlobalStability-Main2} goes beyond this result, as we illustrate by taking the simple affine map $f(t)=-t+2b.$ Since $f$ has infinitely many $2$-cycles, Theorem 2.4.1 in \cite{Ko-La1993} fails to address this case. However, Theorem \ref{Th-ClarkGlobalStability} is valid here. Indeed, $f^\prime(t)<\gamma_2$ is valid for all values of $a$ that satisfy $a(1+a^k)<1.$ Also, $g(t)=(2a^{k+1}-1)t+2b(1-a^{k+1})$ has no  $2$-cycle for all values of $0<a<1.$ So, Part (i) of the Theorem is applicable here. To illustrate Part (ii), we give our final example.

\begin{example}
Consider Eq. \eqref{Eq-Clark} with $f(t)=\frac{b}{1+t},\; t\geq 0$ such that $b<-\gamma=\frac{a^{k+1}}{1-a}.$ In this case, $f$ is decreasing and $\inf f^\prime (t)>\gamma.$  
We fix $b=2,$ $k=3$ and $a=\frac{7}{10}.$ In reference to Proposition \ref{Pr-Clark2}, we have
$$\gamma_2=\frac{219}{100}\quad \text{and}\quad g_3(x)=\frac{219}{119}x - \frac{200}{119(x + 1)}.$$ 
Also,
$$g_4(x)=\frac{438+19x-219x^2}{219x^2 + 219x - 200}.$$
The curves and the feasible region of the inequalities in \eqref{In1-Clark} are plotted in Fig. \ref{Fig-ClarkExample}
\end{example}

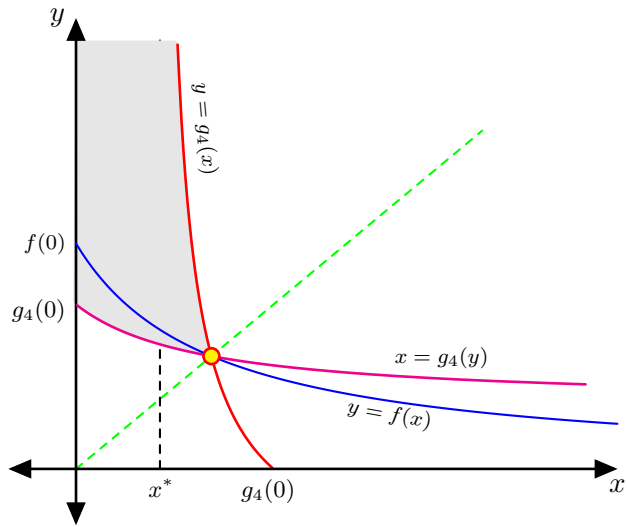
\begin{figure}[htbp]
\begin{center}
\begin{tikzpicture}[line cap=round,line join=round,>=triangle 45,x=1.2cm,y=1.0cm,scale=1.5]
\draw[line width=0.8pt,color=green,dashed,domain=0:3.0] plot(\x,{\x});
\draw[line width=0.8pt,color=black,dashed] (0.62,0)--(0.62,3.8);
\fill[fill=gray!20] (0,3.8) -- (-0.021076570638651364,1.4786061035162508) -- (0.05478844855663731,1.4093498215215403) -- (0.12756838355029654,1.3520771559281783) -- (0.21600968850938068,1.2921328816944395) -- (0.3006806386118561,1.2427310830440725) -- (0.3954896177679382,1.1948488829712254) -- (0.48088584032056647,1.157202259367219) -- (0.5650494960630458,1.1242955518250524) -- (0.6626591772581826,1.0904833104613982) -- (0.7705137054648077,1.0576393796104706) -- (0.8791611604844384,1.0285215872768227) -- (1,1) -- (0.9748148148148159,1.121039609711936) -- (0.9511111111111121,1.2497902705036725) -- (0.8977777777777788,1.6088791466971815) -- (0.88,1.7566390000591452) -- (0.8622222222222232,1.9228239877008502) -- (0.8503703703703713,2.0456342764436455) -- (0.8385185185185196,2.17959432625931) -- (0.8266666666666677,2.3263010993136293) -- (0.8088888888888899,2.5745634967885156) -- (0.7614814814814824,3.471695998316672) -- (0.7437037037037046,3.8) -- cycle;
\draw[-triangle 45, line width=1.0pt,scale=1] (0,0) -- (0,4.0) node[left] {$y$};
\draw[line width=1.0pt,-triangle 45] (0,0) -- (0.0,-0.5);
\draw[line width=0.8pt,color=blue,smooth,samples=100,domain=0.0:4.0] plot(\x,{2/(1+\x)});
\draw[line width=1pt,color=magenta,smooth,variable=\x,domain=0.75:1.45] plot({-(219*\x*\x - 19*\x - 438)/(219*\x*\x + 219*\x - 200)},\x);
\draw[line width=1pt,color=red,smooth,variable=\x,domain=0.75:1.45] plot(\x,{-(219*\x*\x - 19*\x - 438)/(219*\x*\x + 219*\x - 200)});
\draw[-triangle 45, line width=1.0pt,scale=1] (0,0) -- (4.0,0) node[below] {$x$};
\draw[line width=1.0pt,-triangle 45] (0,0) -- (-0.5,0);
\draw[scale=1] (0,2) node[left,rotate=0] {\scriptsize $f(0)$};
\draw[scale=1] (2.7,0.4) node[left,rotate=-10] {\scriptsize $y=f(x)$};
\draw[scale=1] (2.7,0.8) node[above,rotate=0] {\scriptsize $x=g_4(y)$};
\draw[scale=1] (0.8,3) node[above,rotate=-85] {\scriptsize $y=g_4(x)$};
\draw[scale=1] (1.42,0) node[below,rotate=0] {\footnotesize $g_4(0)$};
\draw[scale=1] (0.63,0) node[below,rotate=0] {\footnotesize $x^*$};
\draw[scale=1] (0,1.4) node[left,rotate=0] {\footnotesize $g_4(0)$};
\draw[line width=1.0pt,red,fill=yellow] (1.0,1.0) circle (2.0pt);
\end{tikzpicture}
\end{center}
\caption{This figure shows the feasible region of the inequalities in \eqref{In1-Clark} }\label{Fig-ClarkExample}
\end{figure}
\section{Conclusion and discussion}
We considered the general scalar $k$-dimensional difference equation $x_{n+1}=F_0(x_n,\ldots,x_{n-k})$, then we expanded the difference equation by successive substitutions. This gives us a system of the form $y_{n+1}=F_m(y_{n-m},\ldots,y_{n-m-k})$ that has a delay $m+k$, and we denote this process by the expansion strategy. Every solution of the original system can be a solution of the expanded system, but the converse is not necessarily true. We investigated the stability connection between the two systems. The expansion process has advantages that are of paramount significance in stability analysis. When the map $F_0$ is affine, the expansion technique leads to a local stability algorithm that can be used as an alternative to the Jury stability algorithm. The results here expand and complement the results obtained in \cite{Al-Ca-Ka2025}. In particular, the results in \cite{Al-Ca-Ka2025} establish sufficient conditions, while here, we establish the necessary and sufficient conditions.  The conclusion of the algorithm states the following: Let the characteristic polynomial of the Jacobian matrix $J_0$ of $F_0$ at an equilibrium solution be
$$p_0(x)=x^k+a_0x^{k-1}+\cdots,a_{k-2}x+a_{k-1}.$$
Define the initial column vector $V_0=(a_0,a_1,\ldots,a_{k-1})^t,$ then the vectors $V_m=\left(J_0^t\right)^mV_0.$ The equilibrium is locally stable if and only if $\|V_m\|_1<1$ for some finite number $m$.
\\

When our expansion strategy is used for a nonlinear system $x_{n+1}=F_0(x_n,\ldots,x_{n-k+1})$, local stability can be achieved through the same results obtained in the linear case. In other words, if $F_0$ is sufficiently smooth, a hyperbolic fixed point of $F_0$ is locally asymptotically stable if and only if $\|\nabla F_m\|_1<1$ at the fixed point for some finite number $m.$. Furthermore, the expanded map $F_m$ can have different monotonicity characteristics from those of $F_0$. In particular, when $F_m$ becomes monotonic in each argument, the power of the embedding technique of \cite{Al-Ca-Ka2024B} can be utilized to prove global stability. Regardless of the technique you use for global stability, the rule of thumb becomes as follows: if you obtain local stability from $\|\nabla F_m\|_1<1,$ then use the expanded system $F_m$ to tackle global stability. Several examples, including Ricker's and Clark's models, illustrate the expansion strategy and its effectiveness when combined with the embedding technique. 
\bigskip

\noindent{\textbf{Acknowledgement:}} The authors thank the reviewers for their comments and remarks that lead to the current version of our paper. The first author is supported by a sabbatical leave from the American University of Sharjah and by Maria Zambrano grant for attracting international talent from the Polytechnic University of Cartagena.
\bibliographystyle{unsrt}

\bibliography{AlSharawi-bibliography}
\end{document}